\newtheorem{theorem}{Theorem}
\newtheorem{proposition}[theorem]{Proposition}
\newtheorem{corollary}[theorem]{Corollary}
\newtheorem{lemma}[theorem]{Lemma}
\newtheorem{maintheorem}{Theorem}
\theoremstyle{remark}
\newtheorem{remark}[theorem]{Remark}
\theoremstyle{definition}
\newtheorem{definition}[theorem]{Definition}
\newtheorem{example}[theorem]{Example}
\newtheorem*{notation}{Notation}
\newcommand\reallywidehat[1]{%
\savestack{\tmpbox}{\stretchto{%
  \scaleto{%
    \scalerel*[\widthof{\ensuremath{#1}}]{\kern.1pt\mathchar"0362\kern.1pt}%
    {\rule{0ex}{\textheight}}
  }{\textheight}%
}{2.4ex}}%
\stackon[-6.9pt]{#1}{\tmpbox}%
}
\DeclareMathOperator{\ppcm}{lcm}
\DeclareMathOperator{\pgcd}{gcd}
\DeclareMathOperator{\res}{Res}
\DeclareMathOperator{\rk}{rk}
\newcommand{\dd}{\mathrm{d}}
\newcommand{\phie}{\mathrm{e}}
\newcommand{\phii}{\mathrm{i}}
\newcommand{\glaisher}{\mathrm{A}}
\title[{Special values of multiple Hurwitz zeta functions}]{Values and derivative values at nonpositive integers of generalized multiple Hurwitz zeta functions}
\author{Simon Rutard}
\thanks{\textit{Acknowledgements.} This work was supported in part by a JSPS Postdoctoral Fellowship (PE24744) at Nagoya University. This work also came to light thanks to Prof. Driss Essouabri, with whom I discussed many ideas of the paper. The author is grateful to the reviewer for their valuable suggestions.}
\date{\today}
\email{simon.rutard@math.nagoya-u.ac.jp}
\address{Graduate School of Mathematics, Nagoya University, Chikusa-ku, Nagoya 464-8602, Japan}
\keywords{Multiple zeta functions, Special values, Witten zeta functions}
\subjclass[2020]{Primary 11M32, Secondary 11M41}
\begin{document}

\begin{abstract}
We establish the meromorphic continuation of certain multiple zeta functions of generalized Hurwitz type. From this meromorphic continuation, we obtain explicit formulas for their (derivative) values at nonpositive integers along a given direction. As an application, we provide explicit formulas for some values and derivative values of the Witten zeta functions $\zeta_{\mathfrak{g}_2}$ and $\zeta_{\mathfrak{so}(5)}$. Furthermore, by employing a Meinardus-type theorem, we investigate the asymptotic behavior of the number of $n$-dimensional representations of the exceptional Lie algebra $\mathfrak{g}_2$.
\end{abstract}

\maketitle

\section{Introduction}

\subsection{Background}
Throughout this paper, we fix positive integers $r,m \in \mathbb{N}$, and two tuples $\mathbf{x}=(x_1,\ldots,x_r)$ and $\mathbf{c}=(c_{q,p})_{1 \leq q \leq m, 1 \leq p \leq r}$ such that $x_1,\ldots,x_r>0$ and $\Re(c_{1,1}),\ldots,\Re(c_{m,r})>0$. We consider the following Dirichlet series
\begin{equation}
    \zeta(\mathbf{c},\mathbf{x},\mathbf{s}) := \sum_{n_1, \ldots, n_r \geq 0} \prod_{i=1}^{r} (n_i+x_i)^{-s_i} \prod_{j=1}^{m} \left( \sum_{i=1}^r c_{j,i} (n_i+x_i) \right)^{-s_{r+j}}, \label{eq:def_multiple_zeta_function_hurwitz_type}
\end{equation}
which converges absolutely for $\Re(s_1),\ldots,\Re(s_{r+m})>1$. We view this as a multiple zeta function of generalized Hurwitz type. The analytic properties of (multiple) zeta functions have been studied by many, including H. Mellin, P. Cassou-Noguès, P. Sargos, B. Lichtin, M. Kaneko, T. Arakawa, S. Akiyama, Y. Tanigawa, S. Egami, K. Matsumoto, D. Essouabri, and Y. Komori. In \cite{komori2010integral}, Komori studied the analytic continuation and the singularities of multiple zeta functions of generalized Hurwitz-Lerch type, and proved that it admits a meromorphic continuation to the whole complex space, and that nonpositive integers are points of indeterminacy. In particular, the values at nonpositive integers of $\zeta(\mathbf{c},\mathbf{x},\mathbf{s})$ are not well-defined. One can still give meaning to values at nonpositive integers by taking successive limits of the variables at those points (see \cite{akiyama2001multiple}). One can also consider directional values (see (\ref{eq:def_directional_value})), which consist of approaching a tuple $-\mathbf{N}=(-N_1,\ldots,-N_{r+m}) \in \mathbb{Z}_{\leq 0}^{r+m}$ along a direction $\bm{\theta}=(\theta_1,\ldots,\theta_{r+m}) \in \mathbb{C}^{r+m}$. These directional values are well-defined because Komori proved that the single-variable function $z \mapsto \zeta(\mathbf{c},\mathbf{x},-\mathbf{N}+z\bm{\theta})$ is regular at $z=0$, assuming certain conditions in the direction $\bm{\theta}$ (see \cite[Theorem 3.22]{komori2010integral}). Komori also gave a non-explicit formula for directional values at nonpositive integers involving generalized Bernoulli numbers. Further work by Essouabri and Matsumoto established several explicit formulas for directional values of the Euler-Zagier type, and of the polynomial type (see \cite{essouabri20valueseulerzagier}, \cite{essouabri2021values}).

There are several motivations for studying multiple zeta functions as defined in (\ref{eq:def_multiple_zeta_function_hurwitz_type}). Firstly, the function $\zeta(\mathbf{c},\mathbf{x},\mathbf{s})$ as defined in (\ref{eq:def_multiple_zeta_function_hurwitz_type}) is a natural generalization of multiple Hurwitz zeta function of generalized Mordell–Tornheim type introduced by Onodera (see \cite{onodera2021multiple}), and several methods used to study its analytic properties extend naturally to the multiple zeta function $\zeta(\mathbf{c},\mathbf{x},\mathbf{s})$. Secondly, the multiple zeta functions associated with root systems of rank $2$ (see Definition \ref{def:multiple_zeta_root_system} and Example \ref{ex:multiple_zeta_rank_2}) can be written as $\zeta(\mathbf{c},\mathbf{x},\mathbf{s})$ with suitable data $\mathbf{c}$ and $\mathbf{x}$. Note that higher ranks are intractable in our work. Third, if we set $s_1=\cdots=s_r=0$ and $s_{r+1}=\cdots=s_{r+m}=s$, the zeta function $\zeta(\mathbf{c},\mathbf{x},(0,\ldots,0,s,\ldots,s))$ corresponds to a zeta function of Shintani type, which is closely linked to Stark's conjectures.

\begin{definition}
[\cite{komori2023root}]
\label{def:multiple_zeta_root_system}
Let $\Delta$ be a root system equipped with an inner product $\langle \cdot , \cdot \rangle$. We denote by $\Delta_+$ the set of positive roots, and let $\{ \lambda_1,\ldots,\lambda_r \}$ be its fundamental weights, where $r \in \mathbb{N}$ denotes the rank of $\Delta$. One can define a \textit{multiple zeta function associated with the root system} $\Delta$ as the Dirichlet series
$$ \zeta_{\Delta}(\mathbf{s}) := \sum_{n_1,\ldots,n_r \geq 1} \prod_{\alpha \in \Delta_+} \langle \alpha^{\vee}, n_1 \lambda_1 + \cdots + n_r \lambda_r \rangle^{-s_{\alpha}} \quad (\mathbf{s}=(s_{\alpha})_{\alpha \in \Delta_+} \in \mathbb{C}^{|\Delta_+|}, \ \Re(s_{\alpha})>1). $$
\end{definition}

It is known that the function $\zeta_{\Delta}(\mathbf{s})$ has a meromorphic continuation to $\mathbb{C}^{|\Delta_+|}$, and that it is a multiple-variable generalization of the Witten zeta functions defined by
$$ \zeta_{\mathfrak{g}}(s) := \sum_{\varphi} \frac{1}{\dim(\varphi)^s} $$
where $\varphi$ runs over all isomorphism classes of finite-dimensional irreducible representations of a semi-simple Lie algebra $\mathfrak{g}$. Indeed, if we denote by $\Delta(\mathfrak{g})$ the root system of $\mathfrak{g}$, and if we set $s_{\alpha}=s$ for all $\alpha \in \Delta_+(\mathfrak{g})$, by Weyl's dimension formula, we have $\zeta_{\mathfrak{g}}(s)=K(\mathfrak{g})^s \zeta_{\Delta(\mathfrak{g})}(\mathbf{s})$ where $K(\mathfrak{g}) \in \mathbb{N}$ is explicit. Note that its abscissa of convergence is $\sigma_0(\mathfrak{g})=\frac{\rk(\mathfrak{g})}{|\Delta_+(\mathfrak{g})|}$ (see \cite{larsen2008representation}), thus $s=\sigma_0(\mathfrak{g})$ is a singularity for $\zeta_{\mathfrak{g}}(s)$ by Landau's theorem.

\begin{example}
[\cite{komori2023root}]
\label{ex:multiple_zeta_rank_2}
For the root systems of rank $2$, namely $A_2,B_2$ and $G_2$, we have
\begin{align*}
    \zeta_{A_2}(s_1,s_2,s_3) =& \sum_{n_1,n_2 \geq 1} \frac{1}{n_1^{s_1} n_2^{s_2} (n_1+n_2)^{s_3}}, \\
    \zeta_{B_2}(s_1,s_2,s_3,s_4) =& \sum_{n_1,n_2 \geq 1} \frac{1}{n_1^{s_1} n_2^{s_2} (n_1+n_2)^{s_3}(n_1+2n_2)^{s_4}}, \\
    \zeta_{G_2}(s_1,\ldots,s_6) =& \sum_{n_1,n_2 \geq 1} \frac{1}{n_1^{s_1} n_2^{s_2} (n_1+n_2)^{s_3}(n_1+2n_2)^{s_4}(n_1+3n_2)^{s_5}(2n_1+3n_2)^{s_6}}.
\end{align*}
They correspond to the Witten zeta functions
$$ \zeta_{\mathfrak{sl}(3)}(s) = 2^s \zeta_{A_2}(s,s,s), \ \zeta_{\mathfrak{so}(5)}(s) = 6^s \zeta_{B_2}(s,s,s,s), \ \zeta_{\mathfrak{g}_2}(s) = 120^{s} \zeta_{G_2}(s,s,s,s,s,s). $$
\end{example}

Recently, the analytic properties of Witten zeta functions have been studied for their connection with asymptotic formulas for the number of isomorphism classes of $n$-dimensional representations of a semi-simple Lie algebra $\mathfrak{g}$, denoted $r_{\mathfrak{g}}(n)$. Such a sequence can be seen as a generalization of partition numbers, and in \cite{romik2017representations}, Romik proved an asymptotic formula for $r_{\mathfrak{sl}(3)}(n)$ by expanding the scope of the Meinardus theorem (see \cite{andrews1976partitions}) to a more general setting. To make the asymptotic formula explicit, he studied the analytic continuation of $\zeta_{\mathfrak{sl}(3)}$, and obtained information about its singularities, residues, and the values $\zeta_{\mathfrak{sl}(3)}(0)$ and $\zeta_{\mathfrak{sl}(3)}'(0)$. He later refined his asymptotic formula using results from Borwein and Dilcher (see \cite{borwein2018tornheim}), who provided a simpler formula than Romik's original expression for $\zeta_{\mathfrak{sl}(3)}'(0)$. Later on, Bridges, Brindle, Bringmann, and Franke proved a generalized Meinardus-type theorem to study the asymptotic behavior of the number of partitions generated by some infinite product (see \cite[Theorem 4.4]{bridges2024asymptotic}). As an application, they showed an asymptotic formula for $r_{\mathfrak{so}(5)}(n)$ (see \cite[Theorem 1.3]{bridges2024asymptotic}), although one constant was not computed in their paper due to the lack of explicit formula for $\zeta_{\mathfrak{so}(5)}'(0)$ in the literature.

\subsection{Notation}
Let $s \in \mathbb{C}$, we write $s = \sigma + \phii \tau$, with $\sigma, \tau \in \mathbb{R}$. For any $x \in \mathbb{R}$, we set $H_x := \{ s \in \mathbb{C}; \sigma > x \}$, and let $\overline{H}_x$ denote its closure in $\mathbb{C}$. We denote by $D_{a}(r)$ the open disk centered at $a \in \mathbb{C}$ with radius $r \geq 0$, and let $\overline{D}_a(r)$ denote its closure in $\mathbb{C}$. We write $[\![k,n]\!]:=\{k,\ldots,n\}$ for $k,n \in \mathbb{N}$. Throughout the paper, we assume that a product over the empty set is $1$, and a sum over the empty set is $0$.

For any finite sets $\mathcal{B} \subseteq \mathcal{A}$ and any complex tuple $\mathbf{z}=(z_a)_{a \in \mathcal{A}}$, we write $|\mathbf{z}|_{|\mathcal{B}} := \sum_{b \in \mathcal{B}} z_b$. If $\mathcal{B}=\mathcal{A}$, we simply write $|\mathbf{z}| := |\mathbf{z}|_{|\mathcal{A}}$. Note that if $\mathcal{A}=\emptyset$, then $\mathbf{z}$ corresponds to the empty tuple $\emptyset$, thus by convention $|\mathbf{z}|=0$.

For any finite sets $\mathcal{A}, \mathcal{B}$ and any integer tuple $\mathbf{u}=(u_{a,b})_{(a,b) \in \mathcal{A} \times \mathcal{B}}$, we set $\mathbf{u}_{\bullet b} := (u_{a,b})_{a \in \mathcal{A}}$ for all $b \in \mathcal{B}$, and $\mathbf{u}_{a \bullet} := (u_{a,b})_{b \in \mathcal{B}}$ for all $a \in \mathcal{A}$.

To simplify the notation of certain finite sums in the paper, we introduce a special notation. Let $m \in \mathbb{Z}_{\geq 1}$, $k \in \mathbb{Z}_{\geq 0}$, $j \neq \ell \in [\![ 1,m ]\!]$ be integers, and let $\mathbf{n}=(n_1,\ldots,n_m) \in \mathbb{Z}^{m}$ be a tuple. The sum $\displaystyle{\sum^{\widehat{j,l}}_{\substack{u_{1,1}+\cdots+u_{1,k}=n_1 \\ \cdots \\ u_{m,1}+\cdots+u_{m,k}=n_m}}}$ denotes the finite sum over all nonnegative integer tuples $\mathbf{u}=(u_{q,p})_{\substack{q \in [\![ 1,m ]\!] \setminus \{ j,\ell \} \\ p \in [\![ 1,k ]\!]}}$ such that $u_{q,1}+\cdots+u_{q,k}=n_q \ (1 \leq q \neq j,\ell \leq m)$. \\
Similarly, the sum $\displaystyle{\sum^{\widehat{j}}_{\substack{u_{1,1}+\cdots+u_{1,k}=n_1 \\ \cdots \\ u_{m,1}+\cdots+u_{m,k}=n_m}}}$ denotes the finite sum over all nonnegative integer tuples $\mathbf{u}=(u_{q,p})_{\substack{q \in [\![ 1,m ]\!] \setminus \{ j \} \\ p \in [\![ 1,k ]\!]}}$ such that $u_{q,1}+\cdots+u_{q,k}=n_q \ (1 \leq q \neq j \leq m)$.

\begin{remark}
(i) If $k=0$ or $m=1,2$ (resp. $k=0$ or $m=1$), then the only admissible $\mathbf{u}$ in the first sum (resp. the second sum) corresponds to the empty tuple $\emptyset$.

(ii) If there exists an integer $q \neq j,\ell$ (resp. $q \neq j$) such that $n_q<0$, then the first sum symbol (resp. the second sum symbol) is just $0$ by convention.
\end{remark}

We now introduce some terms and functions that will appear in the main results. We denote the $n$-th harmonic number by $h_n := \sum_{k=1}^n \frac{1}{k}$. We denote the binomial and multinomial coefficients by
\begin{align*}
    \binom{s}{n} :=& \frac{s(s-1)\cdots(s-n+1)}{n!} &&(s \in \mathbb{C}, n \in \mathbb{Z}_{\geq 0}) \\
    \binom{n}{\mathbf{k}} :=& \frac{n!}{k_1!\cdots k_i!} &&(n \in \mathbb{Z}_{\geq 0}, \mathbf{k}=(k_1,\ldots,k_i) \in \mathbb{Z}^i_{\geq 0} \text{ s.t. } \sum_{j=1}^i k_j=n).
\end{align*}
We assume that $h_0=0$, and if $i=0$ then $\mathbf{k}$ is the empty tuple $\emptyset$, and $\binom{0}{\emptyset} = 1$.

\begin{definition}
The \textit{Lerch zeta function} $\phi(z,s,x)$ is the meromorphic continuation in $s$ to the whole complex plane $\mathbb{C}$ of the Dirichlet series $\sum_{n=0}^{+\infty} \frac{z^n}{(n+x)^s}, \ (|z| \leq 1, \Re(s)>1, \Re(x)>0)$. We denote by $\zeta(s,x):=\phi(1,s,x)$ the \textit{Hurwitz zeta function}.
\end{definition}

\begin{definition}
Let $\mathbf{c}=(c_1,\ldots,c_r) \in H_0^r$, $\mathbf{x}=(x_1,\ldots,x_r) \in H_0^r$, and $\mathbf{M}=(M_1,\ldots,M_r) \in \mathbb{Z}_{\geq 0}^r$. The \textit{generalized Barnes zeta function} is defined as the meromorphic continuation to $\mathbb{C}$ of the Dirichlet series
\begin{equation}
    \zeta_B(s,\mathbf{M},\mathbf{x}|\mathbf{c}) := \sum_{n_1=0}^{+\infty} \cdots \sum_{n_r=0}^{+\infty} \frac{(n_1+x_1)^{M_1}\cdots(n_r+x_r)^{M_r}}{(c_1 (n_1+x_1) + \cdots + c_r(n_r+x_r))^s}. \label{eq:def_generalized_barnes}
\end{equation}
Its derivative with respect to $s$ is denoted by $\zeta_B'(s,\mathbf{M},\mathbf{x}|\mathbf{c}) := \partial_s \zeta_B(\mathbf{M},s,\mathbf{x}|\mathbf{c})$. By setting $\mathbf{M}=\mathbf{0}:=(0,\ldots,0)$, we get the classical Barnes zeta function
$$ \zeta_B(s,\mathbf{0},\mathbf{x}|\mathbf{c}) = \sum_{n_1,\ldots,n_{r} \geq 0} \frac{1}{(c_1 (n_1+x_1) + \cdots + c_r(n_r+x_r))^s}.$$
\end{definition}

\begin{definition}
\label{def:incomplete_gamma_function}
We denote the \textit{Euler Gamma function} by $\Gamma$, and the \textit{Euler-Mascheroni constant} by $\gamma:=-\Gamma'(1)$. We define the \textit{upper} (resp. \textit{lower}) \textit{incomplete gamma function} by
\begin{align*}
    \Gamma(s,t,z) :=& \int_{t}^{+\infty} \phie^{-z y} y^{s-1} \dd y &&(s \in \mathbb{C}, \ t>0, \ \Re(z)>0), \\
    \gamma(s,t,z) :=& \int_0^{t} \phie^{-z y} y^{s-1} \dd y &&(\Re(s)>0, \ t>0, \ \Re(z)>0).
\end{align*}
\end{definition}

\begin{remark}
If $z = 1$, we recover the classical incomplete gamma functions used in the literature (see \cite[Chap.IX]{bateman1953higher}). We see that, for all $t>0$, the functions $(s,z) \in H_0 \times H_0 \mapsto \gamma(s,t,z)$ and $(s,z) \in \mathbb{C} \times H_0 \mapsto \Gamma(s,t,z)$ are holomorphic, and we have the identity
\begin{equation}
    \Gamma(s)z^{-s} = \Gamma(s,t,z) + \gamma(s,t,z) \qquad (\Re(s)>0, \ t>0, \ \Re(z)>0). \label{eq:relation_incomplete_gamma_functions}
\end{equation}
\end{remark}

\subsection{Main results}

Throughout this paper, we fix a tuple $-\mathbf{N}=(-N_1,\ldots,-N_{r+m}) \in \mathbb{Z}_{\leq 0}^{r+m}$, and a direction $\bm{\theta}=(\theta_1,\ldots,\theta_{r+m}) \in \mathbb{C}^{r+m}$ satisfying the following non-vanishing condition
\begin{equation}
    \sum_{a \in \mathcal{A}}\theta_a + \sum_{q=1}^m \theta_{r+q} \neq 0 \qquad (\mathcal{A} \subseteq [\![ 1,r ]\!]).\label{eq:direction_non_vanishing_condition}
\end{equation}
We show a meromorphic continuation to the whole complex space of the function $\mathbf{s} \mapsto \zeta(\mathbf{c},\mathbf{x},\mathbf{s})$ in §\ref{section:analytic_continuation_zeta}. In §\ref{subsection:regularity_directional_zeta_0} we prove that the single-variable function $z \mapsto \zeta(\mathbf{c},\mathbf{x},-\mathbf{N}+z\bm{\theta})$ is regular at $z=0$, and provide an explicit formula for it. Hence we can define and study the directional values and the directional derivative values
\begin{align}
    \zeta(\mathbf{c},\mathbf{x},\underset{\bm{\theta}}{-\mathbf{N}}) &:= \lim_{z \to 0} \zeta(\mathbf{c},\mathbf{x},-\mathbf{N}+z\bm{\theta}), \label{eq:def_directional_value} \\
    \zeta'(\mathbf{c},\mathbf{x},\underset{\bm{\theta}}{-\mathbf{N}}) &:= \lim_{z \to 0} \partial_z (\zeta(\mathbf{c},\mathbf{x},-\mathbf{N}+z\bm{\theta})). \label{eq:def_directional_derivative_value}
\end{align}
From the meromorphic continuation of $\zeta(\mathbf{c},\mathbf{x},\mathbf{s})$, we obtain closed explicit formulas for both the directional values and the directional derivative values in §\ref{section:computation_zeta}. Both formulas involve explicit coefficients $C^0_{j,\mathcal{A},\mathbf{k}}(\mathbf{c},\mathbf{N})$ and $C^1_{j,\mathcal{A},\mathbf{k}}(\mathbf{c},\underset{\bm{\theta}}{\mathbf{N}})$ (see (\ref{eq:formula_C0}) and (\ref{eq:formula_C1})), and arise as coefficients of a Taylor expansion of the parametric integral $f_{j,\mathcal{A},\mathbf{k}}(\mathbf{s})$ (see (\ref{eq:def_parameter_integral})). For further details about these coefficients, see §\ref{section:computations_C}.
\begin{maintheorem}
\label{th:directional_values_mzf}
We have
\begin{align}
    \zeta( \mathbf{c},\mathbf{x},\underset{\bm{\theta}}{-\mathbf{N}}) = \sum_{\substack{\emptyset \neq \mathcal{A} \subseteq [\![1,r]\!] \\ 1 \leq j \leq m}}& \frac{(-1)^{|\mathbf{N}|_{|\mathcal{B}}+ \beta}\theta_{r+j}}{|\bm{\theta}|_{|\mathcal{B}\cup[\![r+1,r+m]\!]}} \prod_{p=1}^{\beta} N_{b_p}! \label{eq:zeta_N} \\
    & \times \sum_{k_1+\cdots+k_{\alpha} = \beta + |\mathbf{N}|_{|\mathcal{B}\cup [\![ r+1,r+m ]\!]}} C^0_{j,\mathcal{A},\mathbf{k}}(\mathbf{c},\mathbf{N}) \prod_{p=1}^{\alpha} \frac{\zeta(-N_{a_p}-k_p,x_{a_p})}{k_p!} \nonumber
\end{align}
where we write $\mathcal{A}=\{ a_1,\ldots,a_{\alpha} \} \subseteq [\![ 1,r ]\!]$, and $\mathcal{B}= \{ b_1,\ldots,b_{\beta} \}:=[\![1,r]\!] \setminus \mathcal{A}$.
\end{maintheorem}

\begin{maintheorem}
\label{th:directional_derivative_values_mzf}
We have
\begin{align}
    & \zeta'(\mathbf{c},\mathbf{x},\underset{\bm{\theta}}{-\mathbf{N}}) = \sum_{j=1}^{m} \theta_{r+j} \sum^{\widehat{j}}_{\substack{u_{1,1}+\cdots+u_{1,r}=N_{r+1} \\ \cdots \\ u_{m,1}+\cdots+u_{m,r}=N_{r+m}}} \prod_{\substack{ q=1 \\ q \neq j}}^{m} \left( \binom{N_{r+q}}{\mathbf{u}_{q\bullet}} \prod_{p=1}^{r} c_{q,p}^{u_{q,p}} \right) \label{eq:zeta_prime_N} \\
    & \times \Bigg[ \zeta_B'(-N_{r+j},\mathbf{M}(\mathbf{N},j,\mathbf{u}),\mathbf{x}|\mathbf{c}_{j\bullet})- (\gamma-h_{N_{r+j}}) N_{r+j}! \sum_{\emptyset \neq \mathcal{A} \subseteq [\![1,r]\!]} \prod_{p=1}^{\beta} \frac{(N_{b_p}+|\mathbf{u}_{\bullet b_p}|)!}{(-c_{j,b_p})^{N_{b_p}+|\mathbf{u}_{\bullet b_p}|+1}} \nonumber \\
    & \qquad \qquad \quad \times \sum_{k_1+\cdots+k_{\alpha}= N_{r+j}+\sum_{p=1}^{\beta} (N_{b_p}+|\mathbf{u}_{\bullet b_p}|+1)} \prod_{p=1}^{\alpha} \frac{c_{j,a_p}^{k_p}\zeta(- N_{a_p}-|\mathbf{u}_{\bullet a_p}|-k_p,x_{a_p})}{k_p!} \Bigg] \nonumber \\
    & + \sum_{\substack{\emptyset \neq \mathcal{A} \subseteq [\![1,r]\!] \\ 1 \leq j \leq m}} \frac{(-1)^{|\mathbf{N}|_{|\mathcal{B}}+ \beta}\theta_{r+j}}{|\bm{\theta}|_{|\mathcal{B}\cup[\![r+1,r+m]\!]}} \prod_{p=1}^{\beta} N_{b_p}! \sum_{\substack{k_1+\cdots+k_{\alpha} \\ = \beta + |\mathbf{N}|_{|\mathcal{B}\cup [\![ r+1,r+m ]\!]}}} \sum_{i=1}^{\alpha} \prod_{\substack{p=1 \\ p \neq i}}^{\alpha} \frac{\zeta(-N_{a_p}-k_p,x_{a_p})}{k_p!} \nonumber \\
    & \qquad \qquad \times \left( \frac{1}{\alpha} C^1_{j,\mathcal{A},\mathbf{k}}(\mathbf{c},\underset{\bm{\theta}}{\mathbf{N}}) \frac{\zeta(-N_{a_i}-k_i,x_{a_i})}{k_i!} + \theta_{a_i} C^0_{j,\mathcal{A},\mathbf{k}}(\mathbf{c},\mathbf{N}) \frac{\zeta'(-N_{a_i}-k_i,x_{a_i})}{k_i!} \right) \nonumber
\end{align}
where $\mathbf{M}(\mathbf{N},j,\mathbf{u})$ corresponds to the tuple 
\begin{equation}
    \mathbf{M}(\mathbf{N},j,\mathbf{u}) := (N_p+|\mathbf{u}_{\bullet p}|)_{p \in [\![1,r]\!]} \quad (1 \leq j \leq m, \  \mathbf{u}=(u_{q,p})_{\substack{q \in [\![1,m]\!] \setminus \{ j \} \\ p \in [\![ 1,r ]\!]}}). \label{eq:def_tuple_M}
\end{equation}
\end{maintheorem}

\begin{definition}
\label{def:C0_C1}
Let $\emptyset \neq \mathcal{A}= \{ a_1,\ldots,a_{\alpha} \}\subseteq [\![ 1,r ]\!]$, $\mathcal{B}= \{ b_1,\ldots,b_{\beta} \} := [\![ 1,r ]\!] \setminus \mathcal{A}$, $j \in [\![ 1,m ]\!]$, and $\mathbf{k}=(k_1,\ldots,k_{\alpha}) \in \mathbb{Z}_{\geq 0}^{\alpha}$. We set
\begin{align}
    C&^0_{j,\mathcal{A},\mathbf{k}}(\mathbf{c},\mathbf{N}) = \prod_{q=1}^{m} N_{r+q}! \sum_{\substack{v_{1,1}+\cdots+v_{m,1}=k_1 \\ \cdots \\ v_{1,\alpha}+\cdots+v_{m,\alpha} = k_{\alpha}}} \sum^{\widehat{j}}_{\substack{u_{1,1}+\cdots+u_{1,\beta}=N_{r+1}-|\mathbf{v}_{1 \bullet}| \\ \cdots \\ u_{m,1}+\cdots+u_{m,\beta}=N_{r+m}-|\mathbf{v}_{m \bullet}|}} \label{eq:formula_C0} \\
    & \prod_{p=1}^{\alpha} \left( \binom{k_p}{\mathbf{v}_{\bullet p}} \prod_{q=1}^{m} c_{q,a_p}^{v_{q,p}} \right) \prod_{p=1}^{\beta} \left( \binom{-N_{b_p}-1}{|\mathbf{u}_{\bullet p}|} \binom{|\mathbf{u}_{\bullet p}|}{\mathbf{u}_{\bullet p}} c_{j,b_p}^{-N_{b_p}-1-|\mathbf{u}_{\bullet p}|} \prod_{\substack{q=1 \\ q \neq j}}^{m} c_{q,b_p}^{u_{q,p}} \right) \nonumber \\
    C&_{j,\mathcal{A},\mathbf{k}}^1(\mathbf{c},\underset{\bm{\theta}}{\mathbf{N}}) = \prod_{q=1}^{m} N_{r+q}! \sum_{\substack{v_{1,1}+\cdots+v_{m,1}=k_1 \\ \cdots \\ v_{1,\alpha}+\cdots+v_{m,\alpha} = k_{\alpha}}} \sum^{\widehat{j}}_{\substack{u_{1,1}+\cdots+u_{1,\beta}=N_{r+1}-|\mathbf{v}_{1 \bullet}| \\ \cdots \\ u_{m,1}+\cdots+u_{m,\beta}=N_{r+m}-|\mathbf{v}_{m \bullet}|}} \label{eq:formula_C1} \\
    & \qquad \left( \sum_{p=1}^{\beta} \theta_{b_p} (\gamma-h_{N_{b_p}+|\mathbf{u}_{\bullet p}|}+\ln c_{j,{b_p}}) + \sum_{q=1}^{m} \theta_{r+q} (\gamma-h_{N_{r+q}}) \right) \nonumber \\
    & \qquad \times \prod_{p=1}^{\alpha} \left( \binom{k_p}{\mathbf{v}_{\bullet p}} \prod_{q=1}^{m} c_{q,a_p}^{v_{q,p}} \right) \prod_{p=1}^{\beta} \left( \binom{-N_{b_p}-1}{|\mathbf{u}_{\bullet p}|} \binom{|\mathbf{u}_{\bullet p}|}{\mathbf{u}_{\bullet p}} c_{j,b_p}^{-N_{b_p}-1-|\mathbf{u}_{\bullet p}|} \prod_{\substack{q=1 \\ q \neq j}}^{m} c_{q,b_p}^{u_{q,p}} \right) \nonumber \\
    & + \prod_{q=1}^{m} N_{r+q}! \sum_{\substack{\ell=1 \\ \ell \neq j}}^{m} \theta_{r+\ell} \sum_{\substack{v_{1,1}+\cdots+v_{m,1}=k_1 \\ \cdots \\ v_{1,\alpha}+\cdots+v_{m,\alpha} = k_{\alpha}}} \sum^{\widehat{j,\ell}}_{\substack{u_{1,1}+\cdots+u_{1,\beta}=N_{r+1}-|\mathbf{v}_{1 \bullet}| \\ \cdots \\ u_{m,1}+\cdots+u_{m,\beta}=N_{r+m}-|\mathbf{v}_{m \bullet}|}} \nonumber  \\
    & \qquad \prod_{p=1}^{\alpha} \left( \binom{k_p}{\mathbf{v}_{\bullet p}} \prod_{q=1}^m c_{q,a_p}^{v_{q,p}} \right) \prod_{p=1}^{\beta} \left( \binom{-N_{b_p}-1}{|\mathbf{u}_{\bullet p}|} \binom{|\mathbf{u}_{\bullet p}|}{\mathbf{u}_{\bullet p}} \prod_{\substack{q=1 \\ q \neq j,\ell}}^m c_{q,b_p}^{u_{q,p}} \right) W_{\mathcal{B},j,\ell,\mathbf{n}(\mathbf{N},\mathbf{u}),N_{\ell+r}-|\mathbf{v}_{\ell \bullet}|}(\mathbf{c}) \nonumber
\end{align}
where $\mathbf{n}(\mathbf{N},\mathbf{u}):=(N_{b_1}+|\mathbf{u}_{\bullet 1}|,\ldots,N_{b_\beta}+|\mathbf{u}_{\bullet \beta}|)$, and $W$ is the constant defined in (\ref{eq:def_coefficient_W}).
\end{definition}

An important consequence of Theorems \ref{th:directional_values_mzf} and \ref{th:directional_derivative_values_mzf} is that we have explicit formulas for the directional values and the directional derivative values of the multiple zeta functions associated with the root systems $B_2$ and $G_2$. In particular, we obtain a closed and explicit formula for the two Witten zeta functions $\zeta_{\mathfrak{so}(5)}$ and $\zeta_{\mathfrak{g}_2}$ at nonpositive integers, as well as for their derivatives. We provide formulas for $\zeta_{\mathfrak{so}(5)}$ and $\zeta_{\mathfrak{g}_2}$ at $s=0,-1,\ldots,-4$ and their first derivatives at $s=0$.

\begin{maintheorem}
We have
\begin{table}[htbp]
\centering
\begin{NiceTabular}{|c|c|c|c|c|c|}[colortbl-like]
\toprule
$N$ & $0$ & $-1$ & $-2$ & $-3$ & $-4$ \\
\midrule
$\zeta_{\mathfrak{so}(5)}(N)$ & $\frac{3}{8}$ & $-\frac{11}{26880}$ & $0$ & $-\frac{509}{37847040}$ & $0$ \\
$\zeta_{\mathfrak{g}_2}(N)$ & $\frac{5}{12}$ & $\frac{6641}{62705664}$ & $0$ & $\frac{12522872818983257}{109242202556140093440}$ & $0$ \\
\bottomrule
\end{NiceTabular}
\end{table}
\begin{align}
    \zeta_{\mathfrak{so}(5)}'(0) =& \frac{3\ln 3}{8} + \frac{13\ln 2}{8} + \frac{3\ln \pi}{2} \approx 3.255438605434552\ldots \label{eq:zeta_so5_derivative_0} \\
    \zeta_{\mathfrak{g}_2}'(0) =& \frac{5\ln 5}{12} - \frac{\ln 3}{12} + \frac{13\ln 2}{4} + \frac{5\ln \pi}{2} \approx 5.693601157568522\ldots \label{eq:zeta_g_2_derivative_0}
\end{align}
\end{maintheorem}
Both Witten zeta functions vanish at $s=-2$, which is a special case of a recent result (see \cite{au2024vanishingwitten}) that shows the vanishing at negative even integers of Witten zeta functions associated with any semi-simple Lie algebra. It is worth noting that these results were rediscovered in \cite{au2025valueswitten}, and Au went further to compute the residues at positive poles of Witten zeta functions of rank $2$.

Note that the asymptotic formula for the number of representations $r_{\mathfrak{so}(5)}(n)$ (see \cite[Theorem 1.3]{bridges2024asymptotic}) involves a constant depending on $\zeta_{\mathfrak{so}(5)}'(0)$, and (\ref{eq:zeta_so5_derivative_0}) allows this asymptotic formula to be fully explicit. Applying a Meinardus-type theorem (see \cite[Theorem 4.4]{bridges2024asymptotic}), we establish an asymptotic formula for the number of $n$-dimensional representations of the exceptional Lie algebra $\mathfrak{g}_2$, denoted $r_{\mathfrak{g}_2}(n)$. This formula depends on special values of $\zeta_{\mathfrak{g}_2}$, and on its residues. We point out that these residues are not computed in this paper, but have been computed by Au in \cite[Example 10.4]{au2025valueswitten}.

\begin{maintheorem}
\label{th:representation_g_2}
For $N \geq 1$, there exist explicit constants $\widetilde{B}_2,\ldots,\widetilde{B}_{N+1}$ such that
$$ r_{\mathfrak{g}_2}(n) \underset{n \to +\infty}{=} \frac{C}{n^{\frac{9}{16}}} \exp \left( A_1 n^{\frac{1}{4}}+ A_2 n^{\frac{3}{20}}+ A_3 n^{\frac{1}{20}} \right) \left( 1+ \sum_{j=2}^{N+1} \frac{\widetilde{B}_j}{n^{\frac{j-1}{20}}} + O \left( n^{-\frac{N+1}{20}} \right)\right), $$
where $A_1,A_2,A_3,C$ are defined in §\ref{subsection:proof_number_representation_g2}.
\end{maintheorem}

\subsection{Paper's layout}

In §\ref{section:analytic_continuation_zeta}, we prove the meromorphic continuation of $\zeta(\mathbf{c},\mathbf{x},\mathbf{s})$ using Crandall's expansion techniques, following ideas of Onodera, and Borwein and Dilcher (see \cite{onodera2021multiple}, \cite{borwein2018tornheim}). More specifically, in §\ref{subsection:crandall_expansion}, we start by providing an expression of $\zeta(\mathbf{c},\mathbf{x},\mathbf{s})$ in terms of a series of Eulerian-type integrals. Then we split each range of integral into two sets depending on a free real variable $t>0$. The first part, denoted $F(\mathbf{c},\mathbf{x},\mathbf{s},t)$, corresponds to a series of integrals over the same underlying domain $[0,t]^r$. The second part, denoted $H(\mathbf{c},\mathbf{x},\mathbf{s},t)$, corresponds to the same series of integrals, but over the domain $\mathbb{R}_{\geq 0}^r \setminus [0,t]^r$. We show that $H(\mathbf{c},\mathbf{x},\mathbf{s},t)$ is an entire function over the whole complex space (with respect to $\mathbf{s}$) that vanishes at nonpositive integers, and hence does not contribute for the values at nonpositive integers of $\zeta(\mathbf{c},\mathbf{x},\mathbf{s})$. Using Erdélyi's formula (see Proposition \ref{prop:Erdelyi_formula}) we provide an expression for $F(\mathbf{c},\mathbf{x},\mathbf{s},t)$ involving only the Hurwitz zeta function, the Gamma function, and parameter-dependent integrals denoted by $f_{j,\mathcal{A},\mathbf{k}}(\mathbf{c},\mathbf{s})$ (see (\ref{eq:def_parameter_integral})). In §\ref{subsection:param_dependent_integral} we study the analytic properties of $f_{j,\mathcal{A},\mathbf{k}}(\mathbf{c},\mathbf{s})$, and we obtain that it is an entire function with respect to $\mathbf{s}$. In §\ref{subsection:analytic_continuation_F}, we establish a meromorphic continuation for $F(\mathbf{c},\mathbf{x},\mathbf{s},t)$ with respect to $\mathbf{s}$, using bounds on the Hurwitz zeta functions due to Onodera. In particular, we obtain that it has a meromorphic continuation with singularities belonging to a union of hyperplanes $\mathcal{S}_{r,m}$ which contains nonpositive integer tuples, therefore proving a meromorphic continuation given by
$$ \zeta(\mathbf{c},\mathbf{x},\mathbf{s})=F(\mathbf{c},\mathbf{x},\mathbf{s},t)+H(\mathbf{c},\mathbf{x},\mathbf{s},t) \qquad (\mathbf{s} \in \mathbb{C}^{r+m} \setminus \mathcal{S}_{r,m}, \ t>0 \text{ small enough}). $$
We then apply this formula in §\ref{subsection:regularity_directional_zeta_0} upon proving that the single-variable function $z \mapsto \zeta(\mathbf{c},\mathbf{x},-\mathbf{N}+z\bm{\theta})$ is regular at $z=0$.

In §\ref{section:computation_zeta}, we prove Theorems \ref{th:directional_values_mzf} and \ref{th:directional_derivative_values_mzf} using the meromorphic continuation established for $\zeta(\mathbf{c},\mathbf{x},\mathbf{s})$. More specifically, in §\ref{subsection:proof_theorem_value} we compute the directional value 
$$ \zeta(\mathbf{c},\mathbf{x},\underset{\bm{\theta}}{-\mathbf{N}})=\lim_{z \to 0} F(\mathbf{c},\mathbf{x},-\mathbf{N}+z\bm{\theta},t)+\lim_{z \to 0} H(\mathbf{c},\mathbf{x},-\mathbf{N}+z\bm{\theta},t). $$
This computation follows from the vanishing of $H(\mathbf{c},\mathbf{x},\mathbf{s})$ at nonpositive integers, and from the analytic continuation obtained for $F(\mathbf{c},\mathbf{x},\mathbf{s},t)$ in §\ref{subsection:analytic_continuation_F}. The computation of the directional derivative value 
$$ \zeta'(\mathbf{c},\mathbf{x},\underset{\bm{\theta}}{-\mathbf{N}})=\lim_{z \to 0} \partial_z \left(F(\mathbf{c},\mathbf{x},-\mathbf{N}+z\bm{\theta},t)\right)+\lim_{z \to 0} \partial_z \left(H(\mathbf{c},\mathbf{x},-\mathbf{N}+z\bm{\theta},t)\right) $$
presents additional difficulties, because the second limit $\lim_{z \to 0} \partial_z \left(H(\mathbf{c},\mathbf{x},-\mathbf{N}+z\bm{\theta},t)\right)$ does not vanish in general. In §\ref{subsection:computation_F_prime}, we evaluate the first limit using the meromorphic continuation of $F(\mathbf{c},\mathbf{x},\mathbf{s},t)$ obtained in §\ref{subsection:analytic_continuation_F}. We note that the formula for this limit contains a sum of $\ln t$ and a Laurent series in the variable $t$, with an explicit constant term. In §\ref{subsection:computation_H_prime}, we establish a relation between the second limit $\lim_{z \to 0} \partial_z \left(H(\mathbf{c},\mathbf{x},-\mathbf{N}+z\bm{\theta},t)\right)$ and derivative values at nonpositive integers of some generalized Barnes zeta function. This last limit can also be expressed in terms of a sum of $\ln t$ and an explicit Laurent series in the variable $t$. By summing the two formulas obtained for these two limits, we prove Theorem \ref{th:directional_derivative_values_mzf} in §\ref{subsection:proof_theorem_derivative_value}.

In §\ref{section:computations_C}, we study the values and derivatives of the parametric integral $f_{j,\mathcal{A},\mathbf{k}}(\mathbf{c},\mathbf{s})$ that appear in the meromorphic continuation of $F(\mathbf{c},\mathbf{x},\mathbf{s},t)$.

In §\ref{section:values_hurwitz_barnes}, we recall and establish some results to simplify the formulas obtained for $\zeta'(\mathbf{c},\mathbf{x},\underset{\bm{\theta}}{-\mathbf{N}})$. More particularly, in §\ref{subsection:generalized_barnes_formula}, we prove an explicit expression between generalized Barnes formula associated with rational coefficients, and the Hurwitz zeta function. In §\ref{subsection:values_particular_hurwitz}, we recall known formulas for the special values of the Hurwitz zeta function $\zeta(s,x)$. In §\ref{subsection:values_zeta_witten_0}, we compute the values $\zeta_{\mathfrak{so}(5)}'(0)$ and $\zeta_{\mathfrak{g}_2}'(0)$ using the formulas obtained in §\ref{subsection:generalized_barnes_formula} and §\ref{subsection:values_particular_hurwitz}, and thanks to Theorem \ref{th:directional_derivative_values_mzf}.

In §\ref{section:asymptotic_formula}, we give an overview of a generalized Meinardus Theorem (see \cite{bridges2024asymptotic}). More specifically, we recall that theorem in §\ref{subsection:meinardus_type_theorem}, and then in §\ref{subsection:proof_number_representation_g2} we apply it along with the analytic properties we established for $\zeta_{\mathfrak{g}_2}$ to obtain an asymptotic formula for the number of representations $r_{\mathfrak{g}_2}(n)$.

In §\ref{section:annex}, we use a partial fraction decomposition to study an integral that appears in the computations of the directional derivative values of the entire function $f_{j,\mathcal{A},\mathbf{k}}(\mathbf{c},\mathbf{s})$.

\section{Meromorphic continuation of \texorpdfstring{$\zeta(\mathbf{c},\mathbf{x},\mathbf{s})$}{zeta}}
\label{section:analytic_continuation_zeta}

In this section, we define the linear forms $l_q(\mathbf{y}):=\sum_{p=1}^r c_{q,p}y_p \ (1 \leq q \leq m)$. Our goal is to prove an explicit meromorphic continuation of $\zeta(\mathbf{c},\mathbf{x},\mathbf{s})$ in terms of the Hurwitz zeta function, the Euler gamma function, and certain integrals, which will later be evaluated at special values.

\begin{theorem}
\label{th:analytic_continuation_zeta}
The function $\zeta(\mathbf{c},\mathbf{x},\mathbf{s})$ has an explicit meromorphic continuation with respect to $\mathbf{s}=(s_1,\ldots,s_{r+m})$ to the whole complex space given by
$$ \zeta(\mathbf{c},\mathbf{x},\mathbf{s})=H(\mathbf{c},\mathbf{x},\mathbf{s},t)+F(\mathbf{c},\mathbf{x},\mathbf{s},t) \qquad (t>0 \text{ small enough}), $$
where the function $F(\mathbf{c},\mathbf{x},\mathbf{s},t)$ (resp. $H(\mathbf{c},\mathbf{x},\mathbf{s},t)$) satisfies (\ref{eq:analytic_continuation_F_bis}) (resp. (\ref{eq:def_H_primaire})). Moreover, its singularities belong to the set $\mathcal{S}_{r,m} \subset \mathbb{C}^{r+m}$ defined as a union of all the hyperplanes
\begin{align*}
    \mathcal{H}^{(1)}_{p,n} :=& \{ \mathbf{s} \in \mathbb{C}^{r+m}; s_{p}=n\} && (1 \leq p \leq r, \ n \in \mathbb{N}), \\
    \mathcal{H}^{(2)}_{\mathcal{A},n} :=& \left\{ \mathbf{s} \in \mathbb{C}^{r+m};s_{r+1}+\cdots+s_{r+m} + \sum_{a \in \mathcal{A}} s_{a}=n\right\} && (\mathcal{A} \subseteq [\![ 1,r ]\!], \ n \in \mathbb{Z}_{\leq m}).
\end{align*}
Furthermore, if $\mathbf{s} \in \mathcal{S}_{r,m} \setminus \left( \cup_{\substack{1 \leq p \leq r \\ n \geq 1}} \mathcal{H}^{(1)}_{p,n}\right)$ is a singularity, then its multiplicity is $1$.
\end{theorem}

\subsection{Erdélyi's formula}

Erdélyi’s formula \cite[§1.11]{bateman1953higher}, stated in the following proposition, is crucial for rewriting integrands involving the Lerch zeta function, which appear in the next subsection.

\begin{proposition}[\cite{onodera2021multiple}]
\label{prop:Erdelyi_formula}
Let $u \in \mathbb{C} \setminus (-\infty,0]$. We have
$$\phie^{-xu} \phi(\phie^{-u},s,x) = \Gamma(1-s) u^{s-1} + \sum_{k=0}^{+\infty} \frac{(-u)^{k}}{k!} \zeta(s-k,x) \quad ( |u|<2\pi, s \in \mathbb{C}\setminus \mathbb{N}, x>0).$$
\end{proposition}

To prove that the above series is absolutely convergent, Onodera established a bound using a formula of Hurwitz. This bound will also be used later in the paper.

\begin{lemma}[\cite{onodera2021multiple}]
\label{lem:bound_hurwitz_zeta}
Let $\overline{D}(\delta,R) := \left\{ s \in \mathbb{C}; \min_{n \in \mathbb{N}} |s-n| \geq \delta, |s| \leq R \right\}$ for any $\delta \in \mathbb{R}_{>0}$, $R \in \mathbb{N}$. We then have
$$ |\partial_s^n \zeta(s-k,x)| \underset{R,\varepsilon,\delta}{\ll} k!(k+1)^{R+\varepsilon}(2\pi)^{-k} \qquad (s \in \overline{D}(\delta,r), k,n \geq 0, \varepsilon>0). $$
\end{lemma}

\subsection{Crandall's expansion}
\label{subsection:crandall_expansion}

Let $\mathbf{s}=(s_1,\ldots, s_{r+m}) \in \mathbb{C}^{r+m}$ such that $\Re(s_{1}), \ldots, \Re(s_{r+m}) > 1$. Since for all tuples $\mathbf{n} \in \mathbb{Z}_{\geq 0}^{r}$ we have
$\Gamma(s_{r+q})(l_q(\mathbf{n}+\mathbf{x}))^{-s_{r+q}}=\int_0^{+\infty} \phie^{-l_q(\mathbf{n}+\mathbf{x})y} y^{s_{r+q}-1} \dd y$, then we can write
\begin{align*}
    \zeta(\mathbf{c},\mathbf{x},\mathbf{s}) =& \sum_{n_1,\ldots,n_r \geq 0} \frac{1}{\prod_{p=1}^r (n_p+x_p)^{s_p}}\int_{0}^{+\infty} \cdots \int_{0}^{+\infty} \prod_{q=1}^m \frac{\phie^{-l_q(\mathbf{n}+\mathbf{x})y_q} y_q^{s_{r+q}-1}}{\prod_{q=1}^m \Gamma(s_{r+q})} \dd y_1 \cdots \dd y_m.
\end{align*}
Next we split the range of each integral such that $[0,+\infty) = [0,t] \cup (t,+\infty)$ for any given $t>0$. By using (\ref{eq:relation_incomplete_gamma_functions}), one can then prove that the general term of the Dirichlet series associated with $\zeta(\mathbf{c},\mathbf{x},\mathbf{s})$ satisfies the following identity
\begin{align}
    &\frac{1}{\prod_{p=1}^r (n_p+x_p)^{s_p} \prod_{q=1}^m l_q(\mathbf{n}+\mathbf{x})^{s_{r+q}}} \nonumber \\
    & \quad = \frac{1}{\prod_{p=1}^r (n_p+x_p)^{s_p}} \int_{0}^{t} \cdots \int_{0}^{t} \prod_{q=1}^m \frac{\phie^{-l_q(\mathbf{n}+\mathbf{x})y_q} y_q^{s_{r+q}-1}}{\Gamma(s_{r+q})} \dd y_1 \cdots \dd y_m \label{eq:general_term_dirichlet_series_hurwitz_zeta}\\
    & \qquad + \sum_{\substack{\emptyset \neq \mathcal{A} \subseteq [\![1,m]\!] \\ \mathcal{C} \subseteq \mathcal{A}^c}} \frac{(-1)^{|\mathcal{A}^c \setminus \mathcal{C}|}}{\prod_{q \in [\![1,m]\!] \setminus \mathcal{C}} \Gamma(s_{r+q})} \prod_{p=1}^{r} (n_p+x_p)^{-s_p} \nonumber \\
    & \qquad \qquad \times \prod_{q \in [\![ 1,m ]\!] \setminus \mathcal{C}} \Gamma(s_{r+q},t,l_q(\mathbf{n}+\mathbf{x})) \prod_{q \in \mathcal{C}} l_q(\mathbf{n}+\mathbf{x})^{-s_{r+q}}. \nonumber
\end{align}
On the right-hand side, we obtain two types of terms. The first one depends on an integral over $[0,t]^m$, and the second term is a finite sum of a non-empty product involving upper incomplete gamma functions and linear forms. We now consider the sum over $n_1,\ldots,n_r \geq 0$ for each of these terms, and study the nature of each corresponding series.

\begin{proposition}
\label{prop:normally_convergent_incomplete_gamma}
Let $t>0$, $\mathcal{C} \subsetneq [\![ 1,m ]\!]$, and $K \subset \mathbb{C}^{r+m}$ be a compact set. Then the series
\begin{equation}
    \sum_{n_1,\ldots,n_{r} \geq 0} \frac{\prod_{q \in [\![ 1,m ]\!] \setminus \mathcal{C}} \Gamma(s_{r+q},t,l_q(\mathbf{n}+\mathbf{x}))}{\prod_{p=1}^{r} (n_p+x_p)^{s_p} \prod_{q \in \mathcal{C}} l_q(\mathbf{n}+\mathbf{x})^{s_{r+q}}} \label{eq:normally_convergent_incomplete_gamma}
\end{equation}
is normally convergent on $K$ in the variables $(s_1,\ldots,s_{r+m})$.
\end{proposition}

\begin{proof}
We proceed by bounding the general term of the series above. Let $s \in \mathbb{C}$, $t>0$, $z \in H_0$. By a change of variables, we have
$\Gamma(s,t,z) = 2^s \int_{t/2}^{+\infty} \phie^{-2z y} y^{s-1} \dd y$. This yields the following bound for the upper incomplete gamma function
\begin{equation}
    |\Gamma(s,t,z)| \leq 2^{\Re(s)} \phie^{-\Re(z) t/2} \Gamma(\Re(s),t/2,\Re(z)). \label{eq:bound_incomplete_gamma_function}
\end{equation}
By the previous inequality, we find
\begin{align*}
    & \left| \prod_{p=1}^{r} (n_p+x_p)^{-s_p} \prod_{q \in [\![ 1,m ]\!] \setminus \mathcal{C}} \Gamma(s_{r+q},t,l_q(\mathbf{n}+\mathbf{x})) \prod_{q \in \mathcal{C}} l_q(\mathbf{n}+\mathbf{x})^{-s_{r+q}} \right| \\
    & \quad \leq \frac{\prod_{q \in [\![ 1,m ]\!] \setminus \mathcal{C}} \left( 2^{\sigma_{r+q}} \phie^{-\frac{\Re(l_q(\mathbf{n}+\mathbf{x}))t}{2}} \Gamma(\sigma_{r+q},t/2,\Re(l_q(\mathbf{n}+\mathbf{x})))\right)}{\prod_{p=1}^{r} (n_p+x_p)^{\sigma_p} \prod_{q \in \mathcal{C}} |l_q(\mathbf{n}+\mathbf{x})|^{\sigma_{r+q}}}.
\end{align*}
Note that $x \in \mathbb{R}_{>0} \mapsto \Gamma(\sigma,t/2,x)$ is increasing for any $\sigma>0, t>0$. Since $\Re(l_q(\mathbf{n}+\mathbf{x})) \geq c$ $(1 \leq q \leq m)$ where $c:=\min_{1 \leq q \leq m, 1 \leq p \leq r}(\Re(c_{q,p}x_p))>0$, we find 
$$\Gamma(\sigma_{r+q},t/2,\Re(l_q(\mathbf{n}+\mathbf{x}))) \leq \Gamma(\sigma_{r+q},t/2,c). $$ 
Since $K$ is compact, we have
$$ |2^{\sigma_{r+q}}| \ll_K 1, \qquad \Gamma(\sigma_{r+q},t/2,c) \ll_K 1. $$
Since the set $[\![ 1,m ]\!] \setminus \mathcal{C}$ is non-empty, and since $\Re(c_{q,p})>0$ $(1 \leq q \leq m, 1 \leq p \leq r)$, there exist $d_1,\ldots,d_r>0$ such that
\begin{align*}
    & \left| \prod_{p=1}^{r} (n_p+x_p)^{-s_p} \prod_{q \in [\![ 1,m ]\!] \setminus \mathcal{C}} \Gamma(s_{r+q},t,l_q(\mathbf{n}+\mathbf{x})) \prod_{q \in \mathcal{C}} l_q(\mathbf{n}+\mathbf{x})^{-s_{r+q}} \right| \\
    & \quad \ll_K \frac{\prod_{q \in [\![ 1,m ]\!] \setminus \mathcal{C}} \phie^{-\frac{\Re(l_q(\mathbf{n}+\mathbf{x}))t}{2}}}{\prod_{p=1}^{r} (n_p+x_p)^{\sigma_p} \prod_{q \in \mathcal{C}} |l_q(\mathbf{n}+\mathbf{x})|^{\sigma_{r+q}}}=O(\phie^{-d_1(n_1+x_1)-\cdots-d_r(n_r+x_r)}).
\end{align*}
Therefore, (\ref{eq:normally_convergent_incomplete_gamma}) is normally convergent in the variables $(s_1,\ldots,s_{r+m})$ on $K$.
\end{proof}

\begin{remark}
Note that the series on the right-hand side of (\ref{eq:normally_convergent_incomplete_gamma}) may diverge if we no longer assume $\Re(c_{j,i})>0$ for all $1 \leq i \leq r, \ 1 \leq j \leq m$.
\end{remark}

\begin{proposition}
\label{prop:normally_convergent_H1}
The following series is normally convergent in the variables $s_1,\ldots,s_{r+m}$ on any given compact set $K \subset H_1^{r+m}$,
$$ \sum_{n_1,\ldots,n_r \geq 0} \frac{1}{\prod_{p=1}^r (n_p+x_p)^{s_p}} \int_{0}^{t} \cdots \int_{0}^{t} \prod_{q=1}^m \left( \phie^{-l_q(\mathbf{n}+\mathbf{x})y_q} y_q^{s_{r+q}-1} \right) \dd y_1 \cdots \dd y_m. $$
\end{proposition}

\begin{proof}
It follows from bounding the exponential term in the integrand by $1$.
\end{proof}

From the previous Proposition, we can perform a series-integral swap
\begin{align}
    &\sum_{n_1,\ldots,n_r \geq 0} \frac{1}{\prod_{p=1}^r (n_p+x_p)^{s_p}} \int_{0}^{t} \cdots \int_{0}^{t} \prod_{q=1}^m \left(\phie^{-l_q(\mathbf{n}+\mathbf{x})y_q} y_q^{s_{r+q}-1}\right) \dd y_1 \cdots \dd y_m \nonumber \\
    & \quad = \int_{0}^{t} \cdots \int_{0}^{t} \prod_{p=1}^r \left( \phie^{-l_p^*(\mathbf{y})x_p} \phi(\phie^{-l_p^*(\mathbf{y})},s_{p},x_p) \right) \prod_{q=1}^{m}y_q^{s_{r+q}-1} \dd y_1 \cdots \dd y_m, \label{eq:series_integral_swap}
\end{align}
where $\Re(s_1),\ldots,\Re(s_{r+m})>1$ and
\begin{equation}
    l_p^*(\mathbf{y}):=c_{1,p}y_1+\cdots+c_{m,p}y_p \qquad (1 \leq p \leq r). \label{eq:def_l_star}
\end{equation}
By (\ref{eq:general_term_dirichlet_series_hurwitz_zeta}), by (\ref{eq:series_integral_swap}), by Proposition \ref{prop:normally_convergent_H1}, and by Proposition \ref{prop:normally_convergent_incomplete_gamma}, we find
\begin{equation}   
    \zeta(\mathbf{c},\mathbf{x},\mathbf{s})= F(\mathbf{c},\mathbf{x},\mathbf{s},t)+H(\mathbf{c},\mathbf{x},\mathbf{s},t) \qquad (\Re(s_1),\ldots,\Re(s_{r+m}) > 1, \ t>0), \label{eq:crandall_expansion_zeta}
\end{equation}
where
\begin{align}
    F(\mathbf{c},\mathbf{x},\mathbf{s},t) :=& \int_{0}^{t} \cdots \int_{0}^{t} \prod_{p=1}^r \left( \phie^{-l_p^*(\mathbf{y})x_p} \phi(\phie^{-l_p^*(\mathbf{y})},s_{p},x_p) \right) \prod_{q=1}^{m} \frac{y_q^{s_{r+q}-1}}{\Gamma(s_{r+q})} \dd y_1 \cdots \dd y_m \label{eq:def_F_primaire}\\
    H(\mathbf{c},\mathbf{x},\mathbf{s},t) :=& \sum_{\substack{\emptyset \neq \mathcal{A} \subseteq [\![1,m]\!] \\ \mathcal{C} \subseteq \mathcal{A}^c}} \frac{(-1)^{|\mathcal{A}^c \setminus \mathcal{C}|}}{\prod_{q \in [\![1,m]\!] \setminus \mathcal{C}} \Gamma(s_{r+q})} \label{eq:def_H_primaire} \\
    & \qquad \sum_{n_1,\ldots,n_r \geq 0} \frac{\prod_{q \in [\![ 1,m ]\!] \setminus \mathcal{C}} \Gamma(s_{r+q},t,l_q(\mathbf{n}+\mathbf{x}))}{\prod_{p=1}^{r} (n_p+x_p)^{s_p} \prod_{q \in \mathcal{C}} l_q(\mathbf{n}+\mathbf{x})^{s_{r+q}}}. \nonumber
\end{align}
By Proposition \ref{prop:normally_convergent_incomplete_gamma}, we get that:

\begin{corollary}
\label{cor:H_entire}
Let $t>0$, then $\mathbf{s} \mapsto H(\mathbf{c},\mathbf{x},\mathbf{s},t)$ is entire and vanishes at all nonpositive integers $-\mathbf{N} \in \mathbb{Z}_{\leq 0}^{r+m}$.
\end{corollary}

By the previous corollary, $H(\mathbf{c},\mathbf{x},\mathbf{s},t)$ does not contribute to the directional values of $\zeta(\mathbf{c},\mathbf{x},\mathbf{s})$ at nonpositive integers. Thus, only $F(\mathbf{c},\mathbf{x},\mathbf{s},t)$ determines those values, which we ultimately compute using Erdélyi's formula (see Proposition \ref{prop:Erdelyi_formula}).

\begin{proposition}
\label{prop:analytic_continuation_F}
Let $\mathbf{s}=(s_1,\ldots,s_{r+m}) \in (\mathbb{C} \setminus \mathbb{N})^{r+m}$ such that $\Re(s_1),\ldots,\Re(s_{r+m})>1$. There exists $t_0>0$ such that, for all $t \in (0,t_0)$, we have
\begin{align}
    F(\mathbf{c},\mathbf{x},\mathbf{s},t) =& \sum_{\substack{\mathcal{A} \subseteq [\![1,r]\!] \\ k_1,\ldots,k_{\alpha}\geq 0}} (-1)^{|\mathbf{k}|} \prod_{p=1}^{\beta} \Gamma(1-s_{b_p}) \prod_{p=1}^{\alpha} \frac{\zeta(s_{a_p}-k_p,x_{a_p})}{k_p!} \label{eq:analytic_continuation_F} \\
    & \times \int_0^t \cdots \int_0^t \prod_{q=1}^{m} \frac{y_q^{s_{r+q}-1}}{\Gamma(s_{r+q})}\prod_{p=1}^{\beta} l^*_{b_p}(\mathbf{y})^{s_{b_p}-1} \prod_{p=1}^{\alpha} l^*_{a_p}(\mathbf{y})^{k_p} \dd y_1 \cdots \dd y_{m}. \nonumber
\end{align}
Recall that we write $\mathcal{A}=\{ a_1,\ldots,a_{\alpha} \} \subseteq [\![ 1,r ]\!]$, $\mathcal{B}=\{ b_1,\ldots,b_{\beta} \} := [\![ 1,r ]\!] \setminus \mathcal{A}$.
\end{proposition}

Note that $\mathcal{A}$ or $\mathcal{B}$ can be empty, thus $\alpha$ or $\beta$ can be $0$. In such a case, we understand that the product over an empty set is $1$. Also, if $\alpha=0$, then the tuple $\mathbf{k}=(k_1,\ldots,k_{\alpha})$ corresponds to the empty tuple $\emptyset$, thus $|\mathbf{k}|=0$.

\begin{proof}
Let 
\begin{equation}
    \overline{D}_{r,m}(\delta,R) := \left\{ \mathbf{s}\in {\overline{D}_0}(R)^{r+m}; \min_{n \in \mathbb{Z}}|s_{p}-n| \geq \delta \ (1 \leq p \leq r)\right\} \label{eq:definition_D_deltaR} 
\end{equation}
where $0 < \delta < 1$ and $R>0$ are both arbitrary. Using Erdélyi's formula, we will prove that (\ref{eq:analytic_continuation_F}) holds for all $\mathbf{s} \in \overline{D}_{r,m}(\delta,R)$ such that $\Re(s_1),\ldots,\Re(s_{r+m})>1$.

Let $\mathbf{s} \in \overline{D}_{r,m}(\delta,R)$ such that $\Re(s_1),\ldots,\Re(s_{r+m})>1$. In particular, we have $s_p \in \mathbb{C} \setminus \mathbb{N}$ $(1 \leq p \leq r)$. Note also that 
\begin{equation}
    \forall \mathbf{y}=(y_1,\ldots,y_m) \in [0,t]^{m}, \ |l_p^*(\mathbf{y})| \ll t \qquad (1 \leq p \leq r). \label{eq:bound_l*_p_theta}
\end{equation}
By Erdélyi's formula we obtain the following for $t>0$ sufficiently small
\begin{equation}
    \phie^{-l_p^*(\mathbf{y})x_p} \phi(\phie^{-l_p^*(\mathbf{y})},s_p,x_p) = \Gamma(1-s_p) l_p^*(\mathbf{y})^{s_p-1} + \sum_{k_p=0}^{+\infty} \frac{(-l_p^*(\mathbf{y}))^{k_p}}{k_p!} \zeta(s_p-k_p,x_p), \label{eq:erdelyi_formula_p}
\end{equation}
with $1 \leq p \leq r$ and $\mathbf{y} \in [0,t]^{m}$. We can then substitute the product $\prod_{p=1}^r \left( \phie^{-l_p^*(\mathbf{y})x_p} \phi(\phie^{-l_p^*(\mathbf{y})},s_p,x_p) \right)$ in the integrand of $F(\mathbf{c},\mathbf{x},\mathbf{s},t)$ using the right-hand side of (\ref{eq:erdelyi_formula_p}), and we will then perform another series-integral inversion. In order to justify such inversion, we first need to bound each term on the right-hand side of (\ref{eq:erdelyi_formula_p}) for all $\mathbf{s} \in \overline{D}_{r,m}(\delta,R)$. 

We first recall the following standard results:
\begin{enumerate}
	\item[i)] The function $\Gamma$ is meromorphic on $\mathbb{C}$ with simple poles at nonpositive integers, and the function $\frac{1}{\Gamma}$ is entire.
	\item[ii)] The Hurwitz zeta function is meromorphic on $\mathbb{C}$ with a simple pole at $s=1$.
	\item[iii)] As $s_p \mapsto \Gamma(1-s_p)$ is holomorphic on the compact set $\overline{D}_{r,m}(\delta,R)$, it follows that
\begin{equation}
    |\Gamma(1-s_p)| \underset{R,\delta}{\ll} 1 \qquad (\mathbf{s}=(s_1,\ldots,s_{r+m}) \in \overline{D}_{r,m}(\delta,R)). \label{eq:bound_gamma}
\end{equation}
	\item[iv)] Let $\varepsilon>0$. By Lemma \ref{lem:bound_hurwitz_zeta} we have
\begin{equation}
    \left|\frac{\zeta(s_p-k_p,x_p)}{k_p!}\right| \underset{R,\varepsilon,\delta}{\ll} (2\pi)^{-k_p}(k_p+1)^{R+\varepsilon} \quad (\mathbf{s} \in \overline{D}_{r,m}(\delta,R),k_p \in \mathbb{Z}_{\geq 0}). \label{eq:bound_zeta}
\end{equation}
	Moreover, the series $\sum_{k_p=0}^{+\infty} (k_p+1)^{R+\varepsilon}\left(\frac{t}{2\pi}\right)^{k_p}$ converges for all $t>0$ sufficiently small.
\end{enumerate}

Using iv) together with bounds (\ref{eq:bound_l*_p_theta}) and (\ref{eq:bound_gamma}), we can perform a series-integral inversion
\begin{align}
    \int_{0}^{t}& \cdots \int_{0}^{t} \prod_{p=1}^r \left(\Gamma(1-s_p) l_p^*(\mathbf{y})^{s_p-1} + \sum_{k_p=0}^{+\infty} \frac{(-l_p^*(\mathbf{y}))^{k_p}}{k_p!} \zeta(s_p-k_p,x_p) \right) \prod_{q=1}^{m}\left( y_q^{s_{r+q}-1} \dd y_q \right) \nonumber \\
    &= \sum_{\substack{\mathcal{A} \subseteq [\![1,r]\!] \\ k_1,\ldots,k_{\alpha}\geq 0}} (-1)^{|\mathbf{k}|} \prod_{p=1}^{\beta} \Gamma(1-s_{b_p}) \prod_{p=1}^{\alpha} \frac{\zeta(s_{a_p}-k_p,x_{a_p})}{k_p!} \label{eq:analytic_continuation_F_proof_s1}\\
    & \qquad \qquad \times \int_0^t \cdots \int_0^t \prod_{q=1}^{m} y_q^{s_{r+q}-1}\prod_{p=1}^{\beta} l^*_{b_p}(\mathbf{y})^{s_{b_p}-1} \prod_{p=1}^{\alpha} l^*_{a_p}(\mathbf{y})^{k_p} \dd y_1 \cdots \dd y_{m}. \nonumber
\end{align}
Dividing both sides of (\ref{eq:analytic_continuation_F_proof_s1}) by $\prod_{q=1}^{m}\Gamma(s_{r+q})$, we obtain (\ref{eq:analytic_continuation_F}).
\end{proof}

We now want to extend (\ref{eq:analytic_continuation_F}) to the whole complex space. One major obstruction is that the integral on the right-hand side of (\ref{eq:analytic_continuation_F}) is not convergent when we no longer assume $\Re(s_1),\ldots,\Re(s_{r+m}) > 1$, due to a singularity at the origin of the range of the integral. For this reason, we shall perform a suitable blow-up in order to obtain an analytic continuation of this parameter-dependent integral.

\subsection{Analytic continuation of a parameter-dependent integral}
\label{subsection:param_dependent_integral}

In this subsection, we fix $\mathcal{A}= \{ a_1,\ldots,a_{\alpha} \} \subseteq [\![ 1,r ]\!]$, $\mathcal{B}=\{ b_1,\ldots,b_{\beta} \} := [\![ 1,r ]\!] \setminus \mathcal{A}$, and a tuple $\mathbf{k}=(k_1,\ldots,k_{\alpha}) \in \mathbb{Z}_{\geq 0}^{\alpha}$. We set 
$$ V_j(t):=\big\{ \mathbf{y} \in [0,t]^{m};y_j \geq y_q \ (1 \leq q \neq j \leq m) \big\} \qquad (1 \leq j \leq m). $$
The range of the integral on the right-hand side of (\ref{eq:analytic_continuation_F}) can be decomposed as $[0,t]^m = \bigcup_{j=1}^m V_j(t)$ with pairwise intersections of measure zero. Let $\mathbf{s}=(s_1,\ldots,s_{r+m}) \in \mathbb{C}^{r+m}$ such that $\Re(s_1),\ldots,\Re(s_{r+m})>1$. By the additivity property of the integral, and using the change of variables
\[
    \varphi_j :
    \begin{cases}
        [0,1]^{j-1} \times [0,t] \times [0,1]^{m-j} \to V_j(t) \\
        \mathbf{y}=(y_1,\ldots,y_{m}) \mapsto (y_1y_j,\ldots,y_j,\ldots,y_{m}y_j)
    \end{cases}
    \qquad (1 \leq j \leq m)
\]
we obtain
\begin{align}
    \int_0^t \cdots \int_0^t \prod_{q=1}^{m} \frac{y_q^{s_{r+q}-1}}{\Gamma(s_{r+q})}& \prod_{p=1}^{\beta} l^*_{b_p}(\mathbf{y})^{s_{b_p}-1} \prod_{p=1}^{\alpha} l^*_{a_p}(\mathbf{y})^{k_p} \dd y_1 \cdots \dd y_{m} \nonumber \\
    =& \sum_{j=1}^{m} \frac{t^{|\mathbf{k}|-\beta+|\mathbf{s}|_{|\mathcal{B}\cup [\![ r+1,r+m ]\!]}}}{\Gamma(s_{r+j})(|\mathbf{k}|-\beta+|\mathbf{s}|_{|\mathcal{B}\cup [\![ r+1,r+m ]\!]})} f_{j,\mathcal{A},\mathbf{k}}(\mathbf{c},\mathbf{s}) \label{eq:relation_integral_after_blowup}
\end{align}
where $l^*_1,\ldots,l^*_r$ are defined in (\ref{eq:def_l_star}), and where we have set
\begin{equation}
    f_{j,\mathcal{A},\mathbf{k}}(\mathbf{c},\mathbf{s}) := \int_{0}^1 \cdots \int_{0}^1 \prod_{\substack{q=1 \\ q \neq j}}^{m} \frac{y_q^{s_{r+q}-1}}{\Gamma(s_{r+q})} \prod_{p=1}^{\beta} l^*_{b_p}(\widehat{\mathbf{y}}^j)^{s_{b_p}-1} \prod_{p=1}^{\alpha} l^*_{a_p}(\widehat{\mathbf{y}}^j)^{k_p} \dd y_1 \cdots \widehat{\dd y_j} \cdots  \dd y_{m} \label{eq:def_parameter_integral}
\end{equation}
with $\dd y_1 \cdots \widehat{\dd y_{j}}\cdots \dd y_{m} := \dd y_{1} \cdots \dd y_{j-1} \dd y_{j+1} \cdots \dd y_{m}$, and $\widehat{\mathbf{y}}^j := (y_1,\ldots,y_{j-1},1,y_{j+1},\ldots,y_{m})$. It is clear that the function $\mathbf{s} \mapsto f_{j,\mathcal{A},\mathbf{k}}(\mathbf{c},\mathbf{s})$ is holomorphic over $H_1^{r+m}$.

In the next proposition, we establish a holomorphic continuation for $f_{j,\mathcal{A},\mathbf{k}}(\mathbf{c},\mathbf{s})$ $(\mathbf{s} \in \mathbb{C}^{r+m})$ involving a normally convergent series with respect to the variables $s_1,\ldots,s_{r+m}$. We also provide a bound for this function, which will allow us to prove that the right-hand side of (\ref{eq:analytic_continuation_F}) converges for all tuples $\mathbf{s} \in \mathbb{C}^{r+m}$, except for some tuples $\mathbf{s}$ contained in a union of hyperplanes.

\begin{proposition}
\label{prop:expression_f_Ajepsilon_s}
The function $\mathbf{s} \mapsto f_{j,\mathcal{A},\mathbf{k}}(\mathbf{c},\mathbf{s})$ has a holomorphic continuation on $\mathbb{C}^{r+m}$ given by
\begin{align}
    f_{j,\mathcal{A},\mathbf{k}}(&\mathbf{c},\mathbf{s}) = \sum_{\mathcal{A}' \subseteq [\![ 1,m ]\!] \setminus \{ j \}} \sum_{\substack{ u_{1,1},\ldots,u_{\alpha',\beta} \geq 0 \\ v_{1,1}+\cdots+v_{m,1}=k_1 \\ \cdots \\ v_{1,\alpha}+\cdots+v_{m,\alpha}=k_{\alpha}}} \frac{\varepsilon^{ \sum_{q=1}^{\alpha'} \left(s_{r+a'_q}+|\mathbf{u}_{q \bullet}| + |\mathbf{v}_{q \bullet}|\right)}}{\prod_{q=1}^{\alpha'} \left( \Gamma(s_{r+a'_q})(s_{r+a'_q}+ |\mathbf{u}_{q \bullet}| + |\mathbf{v}_{q \bullet}|)\right)} \nonumber \\
    & \times \prod_{p=1}^{\alpha} \left(\binom{k_p}{\mathbf{v}_{\bullet p}} \prod_{q=1}^m c_{q,a_p}^{v_{q,p}} \right) \prod_{p=1}^{\beta} \left( \binom{s_{b_p}-1}{|\mathbf{u}_{\bullet p}|} \binom{|\mathbf{u}_{\bullet p}|}{\mathbf{u}_{\bullet p}} \prod_{q=1}^{\alpha'} c_{a'_q,b_p}^{u_{q,p}} \right) \label{eq:expression_f_Ajepsilon_s} \\
    & \times \int_{\varepsilon}^{1} \cdots \int_{\varepsilon}^1 \prod_{q=1}^{\beta'} \frac{y_q^{s_{r+b'_q}-1+|\mathbf{v}_{q \bullet}|}}{\Gamma(s_{r+b'_q})} \prod_{p=1}^{\beta} \left( c_{j,b_p}+\sum_{q=1}^{\beta'} c_{b'_q,b_p} y_{q} \right)^{s_{a_p}-1-|\mathbf{u}_{\bullet p}|} \dd y_1 \cdots \dd y_{\beta'} \nonumber
\end{align}
where $\varepsilon \in (0,1)$ is sufficiently small, and where we write $\mathcal{A}'=(a'_1,\ldots,a'_{\alpha'}) \subseteq [\![ 1,m ]\!] \setminus \{ j \}$ and $\mathcal{B}'=(b'_1,\ldots,b'_{\alpha'}) := [\![ 1,m ]\!] \setminus ( \mathcal{A}' \cup \{ j \})$. Moreover, for any $R>0$, we have
\begin{equation}
    |f_{j,\mathcal{A},\mathbf{k}}(\mathbf{c},\mathbf{s})| \underset{R,\varepsilon}{\ll} (m \max_{1 \leq p \leq r, 1 \leq q \leq m}|c_{q,p}|)^{|\mathbf{k}|} \qquad (|s_1|,\ldots,|s_{r+m}| \leq R). \label{eq:bound_f_Ajk}
\end{equation}
\end{proposition}

Before proving the proposition, we need the following elementary uniform bound.

\begin{lemma}
\label{lem:bound_1_over_gamma}
For all $R>0$, we have
$$ \left| \frac{1}{\Gamma(s)} \right| \underset{R}{\ll} 1, \qquad \left| \frac{1}{\Gamma(s)(s+n)} \right| \underset{R}{\ll} 1 \qquad (n \in \mathbb{Z}_{\geq 0}, \ s \in \overline{D}_0(R)). $$
\end{lemma}

\begin{proof}[Proof of Proposition \ref{prop:expression_f_Ajepsilon_s}]
Firstly, we prove that the series on the right-hand side of (\ref{eq:expression_f_Ajepsilon_s}) is normally convergent on any given compact subset of $\mathbb{C}^{r+m}$ (in the variables $\mathbf{s}$) by establishing bounds for each term of this series. Secondly, we show that (\ref{eq:expression_f_Ajepsilon_s}) holds for all $s_1,\ldots,s_{r+m} \in \mathbb{C}$ such that $\Re(s_1),\ldots,\Re(s_{r+m})>1$, which will conclude the proof.

i) We begin by bounding the general term, denoted $T_{\mathcal{A}',\mathbf{u},\mathbf{v}}(\mathbf{s})$, of the series on the right-hand side of (\ref{eq:expression_f_Ajepsilon_s}), on the compact set $K:=\{ \mathbf{s} \in \mathbb{C}^{r+m};\forall k \in [\![ r+m ]\!], s_k \in \overline{D}_0(R) \}$ with $R \in \mathbb{N}$. Let $u_{1,1},\ldots,u_{\alpha',\beta} \geq 0$ and $v_{1,1},\ldots,v_{m,\alpha} \geq 0$ be nonnegative integers such that $v_{p,1}+\cdots+v_{p,m}=k_p$ $(1 \leq p \leq \alpha)$. For all $\mathbf{s} \in K$, we have:
\begin{enumerate}
\item[$\bullet$] From Lemma \ref{lem:bound_1_over_gamma}, we obtain
\begin{align}
    & \left|\frac{1}{\Gamma(s_{r+q})(s_{r+q}+n)}\right| \underset{R}{\ll} 1 && (1 \leq q \leq m, \ n \in \mathbb{Z}_{\geq 0}) \label{eq:bound_gamma_inverse_uniform} \\
    & \left| \frac{1}{\Gamma(s_{r+q})} \right| \underset{R}{\ll} 1 && (1 \leq q \leq m). \label{eq:bound_gamma_inverse}
\end{align}

\item[$\bullet$] By compactness, it is clear that
\begin{align}
    &\left|x^{s_{r+q}-1+|\mathbf{v}_{q \bullet}|} \right| \underset{R,\varepsilon}{\ll} 1 && (1 \leq q \leq m, \ x \in [\varepsilon,1]), \label{eq:bound_x_power} \\
    &\left|\varepsilon^{\sum_{q=1}^{\alpha'} (s_{r+a'_q}+|\mathbf{u}_{q \bullet}|+|\mathbf{v}_{q \bullet}|)} \right| \underset{R,\varepsilon}{\ll} \varepsilon^{\sum_{q=1}^{\alpha'} |\mathbf{u}_{q \bullet}|}. \label{eq:bound_epsilon_power}
\end{align}

\item[$\bullet$] By setting $L:= \max_{1 \leq p \leq r, 1 \leq q \leq m} |c_{q,p}|$, we clearly have that 
\begin{equation}
    \left| \binom{k_p}{\mathbf{v}_{\bullet p}} \prod_{q=1}^m c_{q,a_p}^{v_{q,p}} \right| \leq \binom{k_p}{\mathbf{v}_{\bullet p}} L^{k_p} \qquad (1 \leq p \leq \alpha). \label{eq:bound_binomial}
\end{equation}

\item[$\bullet$] Using the bound
$$ \left| \binom{z-1}{k} \right| \leq \binom{R+k}{k} \qquad (k \in \mathbb{Z}_{\geq 0}, \ z \in \overline{D}_0(R)) $$ 
we obtain
\begin{equation}
    \left| \prod_{p=1}^{\beta} \left( \binom{s_{b_p}-1}{|\mathbf{u}_{\bullet p}|} \binom{|\mathbf{u}_{\bullet p}|}{\mathbf{u}_{\bullet p}} \prod_{q=1}^{\alpha'} c_{a'_q,b_p}^{u_{q,p}} \right) \right| \underset{r,\varepsilon}{\ll} L^{|\mathbf{u}|} \prod_{p=1}^{\beta} \left( \binom{R+|\mathbf{u}_{\bullet p}|}{|\mathbf{u}_{\bullet p}|} \binom{|\mathbf{u}_{\bullet p}|}{\mathbf{u}_{\bullet p}} \right). \label{eq:bound_multinomial}
\end{equation}

\item[$\bullet$] Let $1 \leq p \leq \beta$ and $(y_1,\ldots,y_{\beta'}) \in [\varepsilon,1]^{\beta'}$. It is clear that
$$ \left|\left( c_{j,b_p}+\sum_{q=1}^{\beta'} c_{b_q',b_p} y_{q} \right)^{s_{b_p}-1-|\mathbf{u}_{\bullet p}|}\right| \leq \left|c_{j,b_p}+\sum_{q=1}^{\beta'} c_{b'_q,b_p} y_q \right|^{\sigma_{b_p}-1-|\mathbf{u}_{\bullet p}|} \phie^{R \frac{\pi}{2}}. $$
Note that we have the following bound:
$$ l \leq \left( \Re(c_{j,b_p})+\sum_{q=1}^{\beta'} \Re(c_{b'_q,b_p}) \varepsilon \right) \leq \left| c_{j,b_p}+\sum_{q=1}^{\beta'} c_{b'_q,b_p} y_{q} \right| \leq \sum_{q=1}^{m} |c_{q,b_p}| \leq mL $$ 
with $L := \max_{1 \leq p \leq r, 1 \leq q \leq m}|c_{q,p}|>0, \ l := \min_{1 \leq p \leq r, 1 \leq q \leq m}{\Re(c_{q,p})}>0$. Thus
\[
    \left| c_{j,b_p}+\sum_{q=1}^{\beta'} c_{b'_q,b_p} y_{q} \right|^{\sigma_{b_p}-1-|\mathbf{u}_{\bullet p}|} \leq \left\{
    \begin{aligned}
        & (mL)^{\sigma_{b_p}-|\mathbf{u}_{\bullet p}|-1} \text{ if } \left|c_{j,b_p}+\sum_{q=1}^{\beta'} c_{b'_q,b_p} y_q \right| \geq 1 \\
        & \qquad \qquad \qquad \text{ and } \sigma_{b_p}-|\mathbf{u}_{\bullet p}| \geq 1 \\
        & 1 \text{ if } \left|c_{j,b_p}+\sum_{q=1}^{\beta'} c_{b'_q,b_p} y_q \right| \geq 1, \ \sigma_{b_p}-|\mathbf{u}_{\bullet p}| < 1 \\
        & 1 \text{ if } \left|c_{j,b_p}+\sum_{q=1}^{\beta'} c_{b'_q,b_p} y_q \right| < 1, \ \sigma_{b_p}-|\mathbf{u}_{\bullet p}| \geq 1 \\
        & K^{\sigma_{b_p}-|\mathbf{u}_{\bullet p}|-1} \text{ else}.
    \end{aligned}
    \right. 
\]

By compactness, we also have:
$$ (mL)^{\sigma_p-|\mathbf{u}_{\bullet p}|-1} \underset{R}{\ll} (mL)^{-|\mathbf{u}_{\bullet p}|}, \quad l^{\sigma_p-|\mathbf{u}_{\bullet p}|-1} \underset{R}{\ll} l^{-|\mathbf{u}_{\bullet p}|}. $$
Finally, we find that
\begin{equation}
    \left| \left(c_{j,b_p}+\sum_{q=1}^{\beta'} c_{b'_q,b_p} y_{q} \right)^{s_{b_p}-1-|\mathbf{u}_{\bullet p}|} \right| \underset{R}{\ll} \min(l,mL,1)^{-|\mathbf{u}_{\bullet p}|}. \label{eq:bound_integrand}
\end{equation}
\end{enumerate}
From (\ref{eq:bound_x_power}) and (\ref{eq:bound_integrand}), we get:
\begin{align*}
    & \left| \int_{\varepsilon}^{1} \cdots \int_{\varepsilon}^1 \prod_{q=1}^{\beta'} \frac{y_q^{s_{r+b'_q}-1+ |\mathbf{v}_{q \bullet}|}}{\Gamma(s_{r+b'_q})} \prod_{p=1}^{\beta} \left( c_{j,b_p}+\sum_{q=1}^{\beta'} c_{b'_q,b_p} y_{q} \right)^{s_{a_p}-1-|\mathbf{u}_{\bullet p}|} \dd y_1 \cdots \dd y_{\beta'} \right| \\ 
    & \qquad \underset{R,\varepsilon}{\ll} \min(l,mL,1)^{-|\mathbf{u}_{\bullet p}|}.
\end{align*}
From the above inequality and from (\ref{eq:bound_gamma_inverse_uniform}), (\ref{eq:bound_gamma_inverse}), (\ref{eq:bound_epsilon_power}), (\ref{eq:bound_binomial}), (\ref{eq:bound_multinomial}), we find that 
$$ |T_{\mathcal{A}',\mathbf{u},\mathbf{v}}(\mathbf{s})| \underset{R,\varepsilon}{\ll} \left(\frac{L\varepsilon}{\min(l,mL,1)} \right)^{|\mathbf{u}|} \prod_{p=1}^{\alpha} \left( \binom{k_p}{\mathbf{v}_{\bullet p}} L^{k_p} \right) \prod_{p=1}^{\beta} \left( \binom{R+|\mathbf{u}_{\bullet p}|}{|\mathbf{u}_{\bullet p}|} \binom{|\mathbf{u}_{\bullet p}|}{\mathbf{u}_{\bullet p}} \right).$$
By Newton's multinomial, we have $\displaystyle{\sum_{\substack{v_{1,1}+\cdots+v_{m,1}=k_1 \\ \cdots \\ v_{1,\alpha}+\cdots+v_{m,\alpha}=k_{\alpha}}} \prod_{p=1}^{\alpha} \left( \binom{k_p}{\mathbf{v}_{\bullet p}} L^{k_p} \right)=(mL)^{|\mathbf{k}|}}$. It is straightforward to check that the series 
$$ (mL)^{|\mathbf{k}|} \sum_{u_{1,1},\ldots,u_{\alpha',\beta} \geq 0} \left(\frac{L\varepsilon}{\min(l,mL,1)} \right)^{|\mathbf{u}|} \prod_{p=1}^{\beta} \left( \binom{R+|\mathbf{u}_{\bullet p}|}{|\mathbf{u}_{\bullet p}|} \binom{|\mathbf{u}_{\bullet p}|}{\mathbf{u}_{\bullet p}} \right) $$
converges absolutely for all $0 < \varepsilon < \frac{\min(l,mL,1)}{L}$. Therefore, the series on the right-hand side of (\ref{eq:analytic_continuation_F}) is normally convergent on the compact set $K$, for $\varepsilon>0$ sufficiently small.

ii) Let $\mathbf{s}=(s_1,\ldots,s_{r+m}) \in \mathbb{C}^{r+m}$ such that $\Re(s_1),\ldots,\Re(s_{r+m})>1$. We now want to prove that (\ref{eq:analytic_continuation_F}) holds for such $\mathbf{s}$. Let $\mathcal{A}'=(a'_1,\ldots,a'_{\alpha'}) \subseteq [\![ 1,m ]\!] \setminus \{ j \}$ and $\mathcal{B}'=(b'_1,\ldots,b'_{\alpha'}) := [\![ 1,m ]\!] \setminus ( \mathcal{A}' \cup \{ j \})$. We set
\[
    V_{\mathcal{A}',j}(\varepsilon):= \left\{ (y_1,\ldots,\widehat{y_{j}},\ldots,y_{m}) \in [0,1]^{m-1} ; \
    \begin{aligned} 
        &y_{a'_q} \leq \varepsilon \quad (1 \leq q \leq \alpha') \\
        &y_{b'_q} \geq \varepsilon \quad (1 \leq q \leq \beta')
    \end{aligned}
    \right\} \qquad (1>\varepsilon>0). 
\]
We can then partition the following integration range:
$$ \{ (y_1,\ldots,\widehat{y_j},\ldots,y_{m}); \forall q \neq j, y_q \in [0,1] \} = \bigcup_{\substack{\mathcal{A}' \subseteq [\![1,m]\!] \setminus \{ j \}}} V_{\mathcal{A}',j}(\varepsilon), $$
with pairwise intersections of measure zero. By the additivity property of the integral, we get
\begin{align}
    &f_{j,\mathcal{A},\mathbf{k}}(\mathbf{c},\mathbf{s}) \nonumber \\
    &= \sum_{\mathcal{A}' \subseteq [\![1,m]\!] \setminus \{ j \}} \int_{V_{\mathcal{A}',j}(\varepsilon)} \prod_{\substack{q=1 \\ q \neq j}}^{m} \frac{y_q^{s_{r+q}-1}}{\Gamma(s_{r+q})} \prod_{p=1}^{\beta} l^*_{b_p}(\widehat{\mathbf{y}}^j)^{s_{b_p}-1} \prod_{p=1}^{\alpha} l^*_{a_p}(\widehat{\mathbf{y}}^j)^{k_p} \dd y_1 \cdots \widehat{\dd y_{j}} \cdots \dd y_{m}. \label{eq:expression_chasles_f_Ajepsilon_(s)}
\end{align}
We now aim to expand the integrand of each integral into power series around the origin. Note that these power series converge uniformly on a sufficiently small compact disk. Therefore we are able to perform a series-integral inversion.

Let $\mathcal{A}'=(a'_1,\ldots,a'_{\alpha'}) \subseteq [\![1,m]\!] \setminus \{ j \}$ and $\varepsilon \in (0,\frac{1}{m \max_{1 \leq q \leq m, 1 \leq p \leq r}|c_{q,p}|})$. By Taylor's expansion
$$ (1+X_1+\cdots+X_d)^{-s} = \sum_{k_1,\ldots,k_d \geq 0} \binom{-s}{|\mathbf{k}|} \binom{|\mathbf{k}|}{\mathbf{k}} X_1^{k_1} \cdots X_{d}^{k_d} \quad (|X_1|+\cdots+|X_d|<1), $$
we obtain the following expansion, uniformly in $\widehat{x}^j \in F_{\mathcal{A}',j}(\varepsilon)$,
\begin{align*}
    l^*_{b_p}&(\widehat{\mathbf{y}}^j)^{s_{b_p}-1} \\
    &= \sum_{u_{p,1}, \ldots, u_{p,\alpha'} \geq 0} \binom{s_{b_p}-1}{|\mathbf{u}_{\bullet p}|} \binom{|\mathbf{u}_{\bullet p}|}{\mathbf{u}_{\bullet p}} \left( c_{j,b_p}+\sum_{q=1}^{\beta'} c_{b'_q,b_p} y_q \right)^{s_{b_p}-1-|\mathbf{u}_{\bullet p}|} \prod_{q=1}^{\alpha'} \left(c_{a'_q,b_p}^{u_{q,p}} y_{q}^{u_{q,p}}\right).
\end{align*}
Let $p \in [\![ 1, \alpha ]\!]$ and $k_p \in \mathbb{Z}_{\geq 0}$. By Newton's multinomial, we have
$$ l^*_{a_p}(\widehat{\mathbf{y}}^j)^{k_p} = \sum_{\substack{v_{1,1}+\cdots+v_{m,1}=k_1 \\ \cdots \\ v_{1,\alpha}+\cdots+v_{m,\alpha}=k_{\alpha}}}  \binom{k_p}{\mathbf{v}_{\bullet p}} c_{j,p}^{v_{j,p}} \prod_{\substack{q=1 \\ q \neq j}}^m c_{q,p}^{v_{q,p}} y_{q}^{v_{q,p}}. $$
Plugging these last two expressions into the integrand on the right-hand side of (\ref{eq:expression_chasles_f_Ajepsilon_(s)}), we obtain
\begin{align}
    &\int_{V_{\mathcal{A}',j}(\varepsilon)} \prod_{\substack{q=1 \\ q \neq j}}^{m} \frac{y_q^{s_{r+q}-1}}{\Gamma(s_{r+q})} \prod_{p=1}^{\beta} l^*_{b_p}(\widehat{\mathbf{y}}^j)^{s_{b_p}-1} \prod_{p=1}^{\alpha} l^*_{a_p}(\widehat{\mathbf{y}}^j)^{k_p} \dd y_1 \cdots \widehat{\dd y_{j}} \cdots \dd y_{m} \nonumber \\
    & \ = \sum_{\substack{ u_{1,1},\ldots,u_{\alpha',\beta} \geq 0 \\ v_{1,1}+\cdots+v_{m,1}=k_1 \\ \cdots \\ v_{1,\alpha}+\cdots+v_{m,\alpha}=k_{\alpha}}} \prod_{p=1}^{\alpha} \left( \binom{k_p}{\mathbf{v}_{\bullet p}} c_{j,a_p}^{v_{j,p}} \right) \prod_{p=1}^{\beta} \left( \binom{s_{b_p}-1}{|\mathbf{u}_{\bullet p}|} \binom{|\mathbf{u}_{\bullet p}|}{\mathbf{u}_{\bullet p}} \right) \label{eq:taylor_series_injection_f_AJepsilon_s} \\
    & \quad \times \int_{V_{\mathcal{A}',j}(\varepsilon)} \prod_{\substack{q=1 \\ q \neq j}}^{m} \left( \frac{y_q^{s_{r+q}-1}}{\Gamma(s_{r+q})} \prod_{p=1}^{\alpha} \left( c_{q,a_p}^{v_{q,p}} y_{q}^{v_{q,p}} \right) \right) \nonumber \\
    & \qquad \times \prod_{p=1}^{\beta} \left( \left( c_{j,b_p}+\sum_{q=1}^{\beta'} c_{b'_q,b_p} y_{q} \right)^{s_{b_p}-1-|\mathbf{u}_{\bullet p}|} \prod_{q=1}^{\alpha'} \left( c_{a'_q,b_p}^{u_{q,p}} y_{q}^{u_{q,p}} \right) \right) \dd y_1 \cdots \widehat{\dd y_{j}}\cdots \dd y_{m}. \nonumber
\end{align}
Using Fubini's theorem, we then obtain that the integral inside that last expression is equal to
\begin{align*}
    &\prod_{q=1}^{\alpha'} \frac{\varepsilon^{s_{r+a'_q}+|\mathbf{u}_{q \bullet}|+|\mathbf{v}_{q \bullet}|}}{\Gamma(s_{r+a'_q})(s_{r+a'_q} +|\mathbf{u}_{q \bullet}|+|\mathbf{v}_{q \bullet}|)} \prod_{q=1}^{\alpha'} \left(\prod_{p=1}^{\beta} \left( c_{a'_q,b_p}^{u_{q,p}} y_{q}^{u_{q,p}} \right) \right) \prod_{\substack{q=1 \\ q \neq j}}^{m} \left( \prod_{p=1}^{\alpha} c_{q,a_p}^{v_{q,p}} \right)  \\
    & \qquad \times \int_{\varepsilon}^1 \cdots \int_{\varepsilon}^1 \prod_{q=1}^{\beta'} \frac{y_q^{s_{r+b'_q}-1 + |\mathbf{v}_{q \bullet}|}}{\Gamma(s_{r+b'_q})} \prod_{p=1}^{\beta}  \left( c_{j,b_p}+\sum_{q=1}^{\beta} c_{b'_q,b_p} y_{q} \right)^{s_{b_p}-1-|\mathbf{u}_{\bullet p}|} \dd y_1 \cdots \dd y_{\beta'}.
\end{align*}
Plugging the previous equality into (\ref{eq:taylor_series_injection_f_AJepsilon_s}), and using (\ref{eq:expression_chasles_f_Ajepsilon_(s)}), we get (\ref{eq:expression_f_Ajepsilon_s}).
\end{proof}

\begin{remark}
We proved that the series (with respect to $(\mathbf{u},\mathbf{v})$) on the right-hand side of (\ref{eq:expression_f_Ajepsilon_s}) is normally convergent on any compact subset of $\mathbb{C}^{r+m}$ in the variables $s_1,\ldots,s_{r+m}$. Therefore, one can compute the limit of $f_{j,\mathcal{A},\mathbf{k}}(\mathbf{c},\mathbf{s})$ at $\mathbf{s}=(-N_1,\ldots,-N_{r+m})$ by interchanging the limit and the series on the right-hand side of (\ref{eq:expression_f_Ajepsilon_s}). One can also compute the directional derivative values of $f_{j,\mathcal{A},\mathbf{k}}$ by taking the directional derivative values of the general term of the series on the right-hand side of (\ref{eq:expression_f_Ajepsilon_s}).
\end{remark}

\subsection{Meromorphic continuation of \texorpdfstring{$F(\mathbf{c},\mathbf{x},\mathbf{s},t)$}{analyticcontinuationF}}
\label{subsection:analytic_continuation_F}

We established that $\zeta(\mathbf{c},\mathbf{x},\mathbf{s}) = F(\mathbf{c},\mathbf{x},\mathbf{s},t)+H(\mathbf{c},\mathbf{x},\mathbf{s},t)$ $(t>0,\Re(s_1),\ldots,\Re(s_{r+m})>1)$, where $\mathbf{s}\mapsto H(\mathbf{c},\mathbf{x},\mathbf{s},t)$ is entire and vanishes when $\mathbf{s} \in \mathbb{Z}_{\leq 0}^{r+m}$. We also showed in (\ref{eq:analytic_continuation_F}) an explicit formula for $F(\mathbf{c},\mathbf{x},\mathbf{s},t)$, where $\Re(s_1),\ldots,\Re(s_{r+m})>1$. Here, we extend this formula to $\mathbb{C}^{r+m}$, which introduces singularities along a union of hyperplanes.

\begin{proposition}
\label{prop:analytic_continuation_F_bis}
Let $\mathbf{s}=(s_1,\ldots,s_{r+m}) \in \mathbb{C}^{r+m} \setminus \mathcal{S}_{r,m}$. There exists $t_0>0$ such that for all $t \in (0,t_0)$, we have
\begin{align}
    F(\mathbf{c},\mathbf{x},\mathbf{s},t) = \sum_{\substack{\mathcal{A} \subseteq [\![1,r]\!] \\ 1 \leq j \leq m}} \prod_{p=1}^{\beta}& \Gamma(1-s_{b_p}) \sum_{k_1,\ldots,k_{\alpha}\geq 0} (-1)^{|\mathbf{k}|} f_{j,\mathcal{A},\mathbf{k}}(\mathbf{s}) \label{eq:analytic_continuation_F_bis} \\
    & \times \frac{t^{|\mathbf{k}|-\beta+|\mathbf{s}|_{|\mathcal{B}\cup [\![ r+1,r+m ]\!]}}}{\Gamma(s_{r+j})(|\mathbf{k}|-\beta+|\mathbf{s}|_{|\mathcal{B}\cup [\![ r+1,r+m ]\!]})} \prod_{p=1}^{\alpha} \frac{\zeta(s_{a_p}-k_p,x_{a_p})}{k_p!} \nonumber
\end{align}
where we recall that we write $\mathcal{A}=\{ a_1,\ldots,a_{\alpha}\} \subseteq [\![ 1,r ]\!]$, $\mathcal{B}=\{ b_1,\ldots,b_{\beta}\} := [\![ 1,r ]\!] \setminus \mathcal{A}$. Moreover, the singularities of $\mathbf{s} \mapsto F(\mathbf{c},\mathbf{x},\mathbf{s},t)$ belong to $\mathcal{S}_{r,m}$.
\end{proposition}

\begin{proof}
By plugging (\ref{eq:relation_integral_after_blowup}) into (\ref{eq:analytic_continuation_F}), we obtain that (\ref{eq:analytic_continuation_F_bis}) holds for all $\mathbf{s}\in (\mathbb{C}\setminus \mathbb{N})^{r+m}$ such that $\Re(s_1),\ldots,\Re(s_{r+m})>1$. We now prove that the series on the right-hand side of (\ref{eq:analytic_continuation_F_bis}) is normally convergent on a suitable compact set that does not intersect the set $\mathcal{S}_{r,m}$, and we prove such a result by using bounds obtained for the Hurwitz zeta function and for $f_{j,\mathcal{A},\mathbf{k}}(\mathbf{s})$. We need to choose a suitable compact set by accounting for the possible singularities arising from the right-hand side of (\ref{eq:analytic_continuation_F_bis}).

Let $R>0$ and $\delta \in (0,1)$. We set
\[
    K_{r,m}(\delta,R) := \left\{ \mathbf{s} \in \overline{D}_0(R)^{r+m} ;
    \begin{aligned}
        &\bullet \min_{n \in \mathbb{N}}|s_{p}-n| \geq \delta && (1 \leq p \leq r) \\
        &\bullet \min_{n \in \mathbb{Z}_{\leq m}}\left|\sum_{a \in \mathcal{A}} s_{a} + \sum_{q=1}^m s_{r+q}-n\right| \geq \delta && (\mathcal{A} \subseteq [\![ 1,r ]\!])
    \end{aligned}
    \right\}.
\]
Note that the set $K_{r,m}(\delta,R)$ contains the compact set $D_{r,m}(\delta,R)$ as defined in (\ref{eq:definition_D_deltaR}). Observe that each term of the series on the right-hand side of (\ref{eq:analytic_continuation_F_bis}) is holomorphic on $K_{r,m}(\delta,R)$. Let $\mathcal{A}=
\{ a_1,\ldots,a_{\alpha} \} \subseteq [\![1,r]\!]$, $\mathcal{B}=\{ b_1,\ldots,b_{\beta} \} := [\![1,r]\!]\setminus \mathcal{A}$, $j \in [\![ 1,m ]\!]$. For all $\mathbf{k}=(k_1,\ldots,k_{\alpha}) \in \mathbb{Z}_{\geq 0}^{\alpha}$, we have:
\begin{enumerate}
\item[$\bullet$] If $k_1+\cdots+k_{\alpha} \geq \beta$, then by Lemma \ref{lem:bound_1_over_gamma} we find that
$$ \left|\frac{1}{\Gamma(s_{r+j})(|\mathbf{k}|-\beta+|\mathbf{s}|_{|\mathcal{B}\cup [\![ r+1,r+m ]\!]})}\right| \underset{R,\delta}{\ll} 1 \qquad (\mathbf{s} \in K_{r,m}(\delta,R)). $$
\item[$\bullet$] If $k_1+\cdots+k_{\alpha} < \beta$, we know that the function 
$$\mathbf{s} \in K_{r,m}(\delta,R) \mapsto \frac{1}{\Gamma(s_{r+j})(|\mathbf{k}|-\beta+|\mathbf{s}|_{|\mathcal{B}\cup [\![ r+1,r+m ]\!]})}$$
is entire. By compactness, we have
$$ \left|\frac{1}{\Gamma(s_{r+j})(|\mathbf{k}|-\beta+|\mathbf{s}|_{|\mathcal{B}\cup [\![ r+1,r+m ]\!]})}\right| \underset{R,\delta,\mathbf{k}}{\ll} 1 \qquad (\mathbf{s} \in K_{r,m}(\delta,R)). $$
\end{enumerate}
Since there's only a finite number of nonnegative tuples $\mathbf{k}$ satisfying $k_1+\cdots+k_{\alpha} \leq \beta$, we get that
\begin{equation}
    \left|\frac{1}{\Gamma(s_{r+j})(|\mathbf{k}|-\beta+|\mathbf{s}|_{|\mathcal{B}\cup [\![ r+1,r+m ]\!]})}\right| \underset{R,\delta}{\ll} 1 \qquad (\mathbf{s} \in K_{r,m}(\delta,R), \mathbf{k} \in \mathbb{Z}_{\geq 0}^{\alpha}). \label{eq:bound_1_over_gamma_continuation_F}
\end{equation}

We see that the function $\mathbf{s} \mapsto \prod_{p=1}^{\beta} \Gamma(1-s_{b_p})$ is entire on $K_{r,m}(\delta,R)$. By compactness we have
\begin{equation}
    \left|\prod_{p=1}^{\beta} \Gamma(1-s_{b_p}) \right| \underset{R,\delta}{\ll} 1 \qquad (\mathbf{s} \in K_{r,m}(\delta,R)). \label{eq:bound_product_gamma}
\end{equation}
One can also prove that
\begin{equation}
    \left|t^{|\mathbf{k}|-\beta+|\mathbf{s}|_{|\mathcal{B}\cup [\![ r+1,r+m ]\!]}}\right| \underset{R,\delta}{\ll} t^{|\mathbf{k}|} \qquad (\mathbf{s} \in K_{r,m}(\delta,R), \mathbf{k} \in \mathbb{Z}_{\geq 0}^{\alpha}). \label{eq:ineq_theta_power}
\end{equation}

By (\ref{eq:bound_1_over_gamma_continuation_F}), (\ref{eq:bound_product_gamma}), (\ref{eq:bound_f_Ajk}), (\ref{eq:ineq_theta_power}), and using Lemma \ref{lem:bound_hurwitz_zeta}, we obtain that
\begin{align}
    &\left| f_{j,\mathcal{A},\mathbf{k}}(\mathbf{s}) \frac{t^{|\mathbf{k}|-\beta+|\mathbf{s}|_{|\mathcal{B}\cup [\![ r+1,r+m ]\!]}}}{\Gamma(s_{r+j})(|\mathbf{k}|-\beta+|\mathbf{s}|_{|\mathcal{B}\cup [\![ r+1,r+m ]\!]})} \prod_{p=1}^{\beta} \Gamma(1-s_{b_p}) \prod_{p=1}^{\alpha} \frac{\zeta(s_{a_p}-k_p,x_{a_p})}{k_p!} \right| \nonumber \\
    & \qquad \underset{R,\varepsilon,\delta}{\ll} \left(\frac{m \max_{1 \leq p \leq r, 1 \leq q \leq m}|c_{q,p}|}{2\pi} t\right)^{|\mathbf{k}|} \prod_{p=1}^{\alpha} (k_p+1)^{R+\varepsilon} \quad (\varepsilon>0), \label{eq:bound_general_term_series_F}
\end{align}
Note that the series $\sum_{k_1,\ldots,k_{\alpha} \geq 0} (\frac{m \max_{1 \leq p \leq r, 1 \leq q \leq m}|c_{q,p}|}{2\pi} t)^{|\mathbf{k}|} \prod_{p=1}^{\alpha} (k_p+1)^{R+\varepsilon}$ converges for all $0 < t < \frac{2\pi}{m \max_{1 \leq p \leq r, 1 \leq q \leq m}|c_{q,p}|} =: t_0$. Combining the previous fact with the bound (\ref{eq:bound_general_term_series_F}), we conclude that the series on the right-hand side of (\ref{eq:analytic_continuation_F_bis}) is normally convergent on $K_{r,m}(\delta,R)$ in the variables $\mathbf{s}=(s_1,\ldots,s_{r+m})$, for any $t \in (0,t_0)$.
\end{proof}

Combining the expansion (\ref{eq:crandall_expansion_zeta}) with the convergence result shown above, we obtain a meromorphic continuation for $\zeta(\mathbf{c},\mathbf{x},\mathbf{s})$ to the whole complex space $\mathbb{C}^{r+m}$.

\begin{corollary}
\label{cor:crandall_expansion_zeta}
Let $t>0$ sufficiently small. We have
\begin{equation}
    \zeta(\mathbf{c},\mathbf{x},\mathbf{s}) = F(\mathbf{c},\mathbf{x},\mathbf{s},t) + H(\mathbf{c},\mathbf{x},\mathbf{s},t) \qquad (\mathbf{s} \in \mathbb{C}^{r+m} \setminus \mathcal{S}_{r,m}).  \label{eq:analytic_continuation_zeta}
\end{equation}
\end{corollary}

From Corollary \ref{cor:crandall_expansion_zeta}, Proposition \ref{prop:analytic_continuation_F_bis}, and Corollary \ref{cor:H_entire}, we obtain Theorem \ref{th:analytic_continuation_zeta}.

\subsection{Regularity of \texorpdfstring{$F(\mathbf{c},\mathbf{x},-\mathbf{N}+z\protect \bm{\theta},t)$}{zetaalongdirection} at \texorpdfstring{$z=0$}{z0}}
\label{subsection:regularity_directional_zeta_0}

Let $\mathbf{N} \in \mathbb{Z}_{\geq 0}^{r+m}$ and $\bm{\theta} \in \mathbb{C}^{r+m}$ such that (\ref{eq:direction_non_vanishing_condition}) holds. We prove in this subsection that the single-variable function $z \mapsto \zeta(\mathbf{c},\mathbf{x},-\mathbf{N}+z\bm{\theta})$ is meromorphic on $\mathbb{C}$ and regular at $z=0$, thereby justifying the notation (\ref{eq:def_directional_value}) and (\ref{eq:def_directional_derivative_value}). Thanks to Theorem \ref{th:analytic_continuation_zeta} and Corollary \ref{cor:H_entire}, it suffices to show that $z \mapsto F(\mathbf{c},\mathbf{x},-\mathbf{N}+z\bm{\theta},t)$ is regular at $z=0$. To that end, we will show that (\ref{eq:analytic_continuation_F_bis}) holds around $z=0$ after setting $\mathbf{s}=-\mathbf{N}+z\bm{\theta}$.

\begin{proposition}
\label{prop:analytic_continuation_directional_F}
Let $t>0$ and $\eta>0$ sufficiently small. For all $z \in D_0(\eta)$, we have
\begin{align}
    F(\mathbf{c},\mathbf{x},-\mathbf{N}+z\bm{\theta},t)& = \sum_{\substack{\mathcal{A} \subseteq [\![1,r]\!] \\ 1 \leq j \leq m}} \prod_{p=1}^{\beta} \Gamma(1+N_{b_p}-z\theta_{b_p}) \sum_{k_1,\ldots,k_{\alpha}\geq 0} (-1)^{|\mathbf{k}|} f_{j,\mathcal{A},\mathbf{k}}(-\mathbf{N}+z\bm{\theta}) \nonumber \\
    & \times \frac{t^{|\mathbf{k}|-\beta-|\mathbf{N}|_{|\mathcal{B}\cup [\![ r+1,r+m ]\!]}+z|\bm{\theta}|_{|\mathcal{B}\cup [\![ r+1,r+m ]\!]}}}{\Gamma(-N_{r+j}+z\theta_{r+j})(|\mathbf{k}|-\beta-|\mathbf{N}|_{|\mathcal{B}\cup [\![ r+1,r+m ]\!]}+z|\bm{\theta}|_{|\mathcal{B}\cup [\![ r+1,r+m ]\!]})} \nonumber \\
    & \times \prod_{p=1}^{\alpha} \frac{\zeta(-N_{a_p}+z\theta_{a_p}-k_p,x_{a_p})}{k_p!}. \label{eq:analytic_continuation_directional_F}
\end{align}
\end{proposition}

\begin{proof}
Let $t>0$ be sufficiently small. By definition of $\mathcal{S}_{r,m}$ in Theorem \ref{th:analytic_continuation_zeta}, we have
$$ -\mathbf{N}+z\bm{\theta} \in \mathcal{S}_{r,m} \iff z \in \left( \bigcup_{\mathcal{A} \subseteq [\![ 1,r ]\!]} \frac{1}{|\bm{\theta}|_{|\mathcal{A} \cup [\![ r+1,r+m ]\!]}} \mathbb{Z}_{\leq |\mathbf{N}|_{|\mathcal{A} \cup [\![ r+1,r+m ]\!]}} \right) \cup \left( \bigcup_{\substack{p \in [\![ 1,r ]\!] \\ \text{s.t } \theta_p \neq 0}} \frac{1}{\theta_p} \mathbb{N}\right). $$
We take a radius $\eta$ so that the function $z \in \overline{D}_0(\eta) \mapsto F(\mathbf{c},\mathbf{x},-\mathbf{N}+z\bm{\theta},t)$ has at most one singularity in the disk, at $z=0$. Let $\eta>0$ and $R \in \mathbb{N}$ such that
\begin{align*}
    \eta :=& \frac{1}{2} \min\left( \min_{p \in [\![1,r]\!] \text{ s.t. } \theta_p \neq 0} \left| \frac{1}{\theta_p} \right| ,\min_{\mathcal{A} \subseteq [\![ 1,r ]\!]} \left| \frac{1}{\sum_{a \in \mathcal{A}}\theta_{a}+\sum_{q=1}^m \theta_{r+q}} \right|\right) \\
    R :=& \left\lceil \max_{1 \leq k \leq r+m}(N_{k}+|\theta_k\eta|)\right\rceil \in \mathbb{N}.
\end{align*}
Such $\eta$ exists because of the non-vanishing condition (\ref{eq:direction_non_vanishing_condition}) on $\bm{\theta}$. We clearly have
$$ \forall z \in \overline{D}_0(\eta) \setminus \{ 0 \}, \ -\mathbf{N}+z\bm{\theta} \not \in \mathcal{S}_{r,m}, $$
and with our choice for $\eta$ and $R$, we also have
$$ \{ -\mathbf{N}+z\bm{\theta}; z \in \overline{D}_0(\eta) \} \subseteq \overline{D}_0(R). $$
Therefore, for $\delta>0$ sufficiently small, the compact set
$$ K(\eta,\delta,R):=\{ -\mathbf{N}+z\bm{\theta}; z \in \overline{D}_0(\eta) \} \cap K(\delta,R) $$
contains a non-empty open connected subset of $K(\delta,R)$. We observe that (\ref{eq:analytic_continuation_directional_F}) holds for all $z \in \overline{D}_0(\eta)$ such that $-\mathbf{N}+z\bm{\theta} \in K(\delta,R)$. Therefore it remains to show that the formula on the right-hand side of (\ref{eq:analytic_continuation_directional_F}) is normally convergent on $\overline{D}_0(\eta)$. We will then bound the general term of the series on the right-hand side of (\ref{eq:analytic_continuation_directional_F}), and we will use the same bounds as those used in the proof of Proposition \ref{prop:analytic_continuation_F_bis}.

Let $\mathcal{A}=\{ a_1,\ldots,a_{\alpha} \} \subseteq [\![ 1,r ]\!]$, $\mathcal{B}= \{ b_1,\ldots,b_{\beta} \} := [\![ 1,r ]\!] \setminus \mathcal{A}$, $\mathbf{k}=(k_1,\ldots,k_{\alpha}) \in \mathbb{Z}_{\geq 0}^{\alpha}$, $1 \leq j \leq m$. By the definition of $\eta$, no singularity arises from the terms $\zeta(-N_{a_p}+z\theta_{a_p}-k_p,x_{a_p})$ $(1 \leq p \leq \alpha)$ for $z \in \overline{D}_0(\eta)$. Thanks to (\ref{eq:bound_f_Ajk}), Lemma \ref{lem:bound_1_over_gamma}, and Lemma \ref{lem:bound_hurwitz_zeta}, we obtain:
\begin{align*}
    &|f_{j,\mathcal{A},\mathbf{k}}(-\mathbf{N}+z\bm{\theta})| \underset{\eta}{\ll} \left( m \max_{1 \leq p \leq r, \ 1 \leq q \leq m} |c_{q,p}|\right)^{|\mathbf{k}|} \\
    &\left|\frac{t^{|\mathbf{k}|-\beta-|\mathbf{N}|_{|\mathcal{B}\cup [\![ r+1,r+m ]\!]}+z|\bm{\theta}|_{|\mathcal{B}\cup [\![ r+1,r+m ]\!]}}}{\Gamma(-N_{r+j}+z\theta_{r+j})(|\mathbf{k}|-\beta-|\mathbf{N}|_{|\mathcal{B}\cup [\![ r+1,r+m ]\!]}+z|\bm{\theta}|_{|\mathcal{B}\cup [\![ r+1,r+m ]\!]})} \right| \underset{\eta}{\ll} t^{|\mathbf{k}|} \\
    &\left| \prod_{p=1}^{\alpha} \frac{\zeta(-N_{a_p}+z\theta_{a_p}-k_p,x_{a_p})}{k_p!} \right| \underset{\varepsilon,\eta}{\ll} (2\pi)^{-|\mathbf{k}|}\prod_{p=1}^{\alpha}(k_p+1)^{\eta+\varepsilon} &&(\varepsilon>0)
\end{align*}
uniformly for $z \in \overline{D}_0(\eta)$. By compactness, we also have 
$$ \left| \prod_{p=1}^{\beta} \Gamma(1+N_{b_p}-z\theta_{b_p})\right| \underset{\eta}{\ll} 1 \qquad (z \in \overline{D}_0(\eta)). $$
Therefore, the general term of the series on the right-hand side of (\ref{eq:analytic_continuation_directional_F}) is bounded by $(\frac{m \max_{1 \leq p \leq r, 1 \leq q \leq m} |c_{q,p}|}{2\pi}t)^{|\mathbf{k}|}\prod_{p=1}^{\alpha} (k_p+1)^{\eta+\varepsilon}$. Assuming that $t>0$ is sufficiently small, we get that the series on the right-hand side of (\ref{eq:analytic_continuation_directional_F}) converges normally on $\overline{D}_0(\eta)$.
\end{proof}

\section{Computation of \texorpdfstring{$\zeta(\mathbf{c},\mathbf{x},\protect \underset{\bm{\theta}}{-\mathbf{N}})$}{zeta} and \texorpdfstring{$\zeta'(\mathbf{c},\mathbf{x},\protect \underset{\bm{\theta}}{-\mathbf{N}})$}{zetader}}
\label{section:computation_zeta}

In this section, we prove both Theorem \ref{th:directional_values_mzf} and Theorem \ref{th:directional_derivative_values_mzf}. Both proofs rely on Crandall's expansion established in the previous section, as well as on the meromorphic continuations of the functions $F$ and $H$. By Corollary \ref{cor:crandall_expansion_zeta}, we have
\begin{align}
    \zeta(\mathbf{c},\mathbf{x},\underset{\bm{\theta}}{-\mathbf{N}})=&F(\mathbf{c},\mathbf{x},\underset{\bm{\theta}}{-\mathbf{N}},t)+H(\mathbf{c},\mathbf{x},\underset{\bm{\theta}}{-\mathbf{N}},t) \label{eq:value_zeta_nonpositive_primary} \\
    \zeta'(\mathbf{c},\mathbf{x},\underset{\bm{\theta}}{-\mathbf{N}})=& F'(\mathbf{c},\mathbf{x},\underset{\bm{\theta}}{-\mathbf{N}},t)+H'(\mathbf{c},\mathbf{x},\underset{\bm{\theta}}{-\mathbf{N}},t) \label{eq:value_zeta_prime_nonpositive_primary}
\end{align}
for $t>0$ sufficiently small, where we set
\begin{align*}
    F(\mathbf{c},\mathbf{x},\underset{\bm{\theta}}{-\mathbf{N}},t) :=& \lim_{z \to 0} F(\mathbf{c},\mathbf{x},-\mathbf{N}+z\bm{\theta},t) \\
    H(\mathbf{c},\mathbf{x},\underset{\bm{\theta}}{-\mathbf{N}},t) :=& \lim_{z \to 0} H(\mathbf{c},\mathbf{x},-\mathbf{N}+z\bm{\theta},t) \\
    F'(\mathbf{c},\mathbf{x},\underset{\bm{\theta}}{-\mathbf{N}},t) :=& \lim_{z \to 0} \partial_z \left( F(\mathbf{c},\mathbf{x},-\mathbf{N}+z\bm{\theta},t) \right) \\
    H'(\mathbf{c},\mathbf{x},\underset{\bm{\theta}}{-\mathbf{N}},t) :=& \lim_{z \to 0} \partial_z \left( H(\mathbf{c},\mathbf{x},-\mathbf{N}+z\bm{\theta},t)\right).
\end{align*}
Thanks to Corollary \ref{cor:H_entire}, we get that $H(\mathbf{c},\mathbf{x},\underset{\bm{\theta}}{-\mathbf{N}},t)=0$. In the following subsections, all the remaining directional limits written above need to be computed to prove both Theorems \ref{th:directional_values_mzf} and \ref{th:directional_derivative_values_mzf}.

\subsection{Lemmata}
\label{subsection:lemmata_values}

We state two key lemmata to compute the two special values $\zeta(\mathbf{c},\mathbf{x},\underset{\bm{\theta}}{-\mathbf{N}})$ and $\zeta'(\mathbf{c},\mathbf{x},\underset{\bm{\theta}}{-\mathbf{N}})$.

\begin{lemma}
\label{lem:inverse_gamma_formula}
Let $a,b \in \mathbb{C} \setminus \{ 0 \}$, $N \in \mathbb{Z}_{\geq 0}$ and $n \in \mathbb{Z}$. Then we have
\[
    \frac{1}{\Gamma(az-N)(bz+n)} \underset{z \to 0}{=}
    \begin{cases}
        \frac{(-1)^{N}aN!}{b} + \frac{(-1)^{N}a^2N!}{b}\left(\gamma-h_N \right)z + O(z^2) & \text{ if } n=0 \\
        \frac{(-1)^{N}aN!}{n}z+O(z^2) & \text{ if } n \neq 0
    \end{cases}
\]
where $h_N$ is the $N$-th harmonic number. Moreover, we have $( \frac{1}{\Gamma})'(-N)=(-1)^{N} N!$.
\end{lemma}

\begin{proof}
It follows from applying Gamma's functional relation, and using the first-order Taylor expansion of $\frac{1}{\Gamma}$ at nonpositive integers.
\end{proof}

\begin{lemma}
\label{lem:derivative_value_binomial}
Let $(n_1,\ldots,n_{\alpha}) \in \mathbb{Z}_{\geq 0}^{\alpha}$, we have
\begin{equation}
    \label{eq:formula_derivative_binom_coeff}
    \lim_{z \to 0} \partial_z \left( \prod_{i=1}^{\alpha} \binom{-N_i-1+z\theta_i}{n_i} \right) = \left( \sum_{i=1}^{\alpha} \theta_i (h_{N_i}-h_{N_i+n_i}) \right) \prod_{i=1}^{\alpha} \binom{-N_i-1}{n_i}.
\end{equation}
\end{lemma}

\begin{proof}
It follows from Leibniz's rule and from
\begin{align*}
    \lim_{z \to 0} \partial_z \left( \binom{-N_i-1+z\theta_i}{n_i} \right) =& \theta_i \left( \sum_{j=0}^{n_i-1} \frac{(-N_i-1) \cdots \reallywidehat{(-N_i-1-j)} \cdots (-N_i-n_i) }{n_i!} \right) \\
    =& \theta_i \binom{-N_i-1}{n_i} \sum_{j=0}^{n_i-1} \frac{1}{-N_i-1-j},
\end{align*}
where $\reallywidehat{(-N_i-1-j)}$ indicates that the factor $(-N_i-1-j)$ is omitted from the product. Note that the last sum yields a difference between harmonic numbers.
\end{proof}

\subsection{Proof of Theorem \ref{th:directional_values_mzf}}
\label{subsection:proof_theorem_value}

From Corollary \ref{cor:H_entire}, we know that $H(\mathbf{c},\mathbf{x},\underset{\bm{\theta}}{-\mathbf{N}},t)=0$. Therefore, the only contribution to the special value $\zeta(\mathbf{c},\mathbf{x},\underset{\bm{\theta}}{-\mathbf{N}})$ comes from the term $F(\mathbf{c},\mathbf{x},\underset{\bm{\theta}}{-\mathbf{N}},t)$. Using Proposition \ref{prop:analytic_continuation_directional_F}, we obtain
\begin{align}
    F(\mathbf{c},\mathbf{x},&\underset{\bm{\theta}}{-\mathbf{N}},t) = \sum_{\substack{\mathcal{A} \subseteq [\![1,r]\!] \\ 1 \leq j \leq m}} \prod_{p=1}^{\beta} N_{b_p}! \sum_{k_1,\ldots,k_{\alpha}\geq 0} (-1)^{|\mathbf{k}|} f_{j,\mathcal{A},\mathbf{k}}(\mathbf{c},-\mathbf{N}) \prod_{p=1}^{\alpha} \frac{\zeta(-N_{a_p}-k_p,x_{a_p})}{k_p!} \nonumber \\
    & \times \lim_{z \to 0}\frac{t^{|\mathbf{k}|-\beta-|\mathbf{N}|_{|\mathcal{B}\cup [\![ r+1,r+m ]\!]}+z|\bm{\theta}|_{|\mathcal{B}\cup [\![ r+1,r+m ]\!]}}}{\Gamma(-N_{r+j}+z\theta_{r+j})(|\mathbf{k}|-\beta-|\mathbf{N}|_{|\mathcal{B}\cup [\![ r+1,r+m ]\!]}+z|\bm{\theta}|_{|\mathcal{B}\cup [\![ r+1,r+m ]\!]})}. \label{eq:directional_formula_F_proof_thA}
\end{align}

Let $\mathcal{A}=\{ a_1,\ldots,a_{\alpha} \} \subseteq [\![1,r]\!]$, $j \in [\![ 1,m ]\!]$, $\mathbf{k}=(k_1,\ldots,k_{\alpha}) \in \mathbb{Z}_{\geq 0}^{\alpha}$. By Lemma \ref{lem:inverse_gamma_formula} we find that
\begin{align}
    \lim_{z \to 0}&\frac{1}{\Gamma(-N_{r+j}+z\theta_{r+j})(|\mathbf{k}|-\beta-|\mathbf{N}|_{|\mathcal{B}\cup [\![ r+1,r+m ]\!]}+z|\bm{\theta}|_{|\mathcal{B}\cup [\![ r+1,r+m ]\!]})} \nonumber \\
    &\qquad \qquad \qquad \qquad =
    \begin{cases}
        0 & \text{ if } |\mathbf{k}| \neq \beta+|\mathbf{N}|_{|\mathcal{B}\cup [\![ r+1,r+m ]\!]} \\
        \frac{(-1)^{N_{r+j}}\theta_{r+j}N_{r+j}!}{|\bm{\theta}|_{|\mathcal{B}\cup [\![ r+1,r+m ]\!]}} & \text{ otherwise.}
    \end{cases}
    \label{eq:computation_limit_th_A}
\end{align}
In particular, if $\mathcal{A}=\emptyset$, then $\mathbf{k}$ corresponds to the empty tuple $\emptyset$, and so $|\mathbf{k}|=0$. Since $0<\beta=m$, we conclude that the limit on the left-hand side of (\ref{eq:computation_limit_th_A}) vanishes when $\mathcal{A}=\emptyset$.

Let 
\begin{equation}
    C^0_{j,\mathcal{A},\mathbf{k}}(\mathbf{c},\mathbf{N}) := (-1)^{N_{r+1}+\cdots+\widehat{N_{r+j}}+\cdots+N_{r+m}} N_{r+j}! f_{j,\mathcal{A},\mathbf{k}}(-\mathbf{N}). \label{eq:def_C_0}
\end{equation}
Using (\ref{eq:computation_limit_th_A}) and splitting the sum on the right-hand side of (\ref{eq:directional_formula_F_proof_thA}) as follows 
$$ \sum_{k_1,\ldots,k_{\alpha} \geq 0} = \sum_{k_1+\cdots+k_{\alpha} = \beta+|\mathbf{N}|_{|\mathcal{B}\cup [\![ r+1,r+m ]\!]}} + \sum_{k_1+\cdots+k_{\alpha} \neq \beta+|\mathbf{N}|_{|\mathcal{B}\cup [\![ r+1,r+m ]\!]}}, $$
we obtain Theorem \ref{th:directional_values_mzf}. \qed

Note that the definition of the coefficient $C^0_{j,\mathcal{A},\mathbf{k}}(\mathbf{c},\mathbf{N})$ simplifies the expression for $\zeta(\mathbf{c},\mathbf{x},\underset{\bm{\theta}}{-\mathbf{N}})$ in Theorem \ref{th:directional_values_mzf}. The computation of this coefficient will be carried out in §\ref{subsection:computation_C_0}.

\subsection{Computation of \texorpdfstring{$F'(\mathbf{c},\mathbf{x},\underset{\bm{\theta}}{-\mathbf{N}},t)$}{directionalderivativef}}
\label{subsection:computation_F_prime}

One can evaluate at $z=0$ the derivative with respect to $z$ termwise, via (\ref{eq:analytic_continuation_directional_F}).

\begin{proposition}
\label{prop:formule_F_prime}
There exists a function with no constant term $F^*(\mathbf{c},\mathbf{x},\underset{\bm{\theta}}{-\mathbf{N}},t) = \mu \ln t + \sum_{n=\ell}^{+\infty} \lambda_n t^{n}$ defined for all $0 < t \ll 1$ where $\ell \in \mathbb{Z}_{\leq 0}$, such that:
\begin{align}
    F'&(\mathbf{c},\mathbf{x},\underset{\bm{\theta}}{-\mathbf{N}},t) = F^*(\mathbf{c},\mathbf{x},\underset{\bm{\theta}}{-\mathbf{N}},t) \label{eq:directional_derivative_formula_F_s0} \\
    & +\sum_{\substack{\emptyset \neq \mathcal{A} \subseteq [\![1,r]\!] \\ 1 \leq j \leq m}} \frac{(-1)^{|\mathbf{N}|_{|\mathcal{B}}+ \beta}\theta_{r+j}}{|\bm{\theta}|_{|\mathcal{B}\cup[\![r+1,r+m]\!]}} \prod_{p=1}^{\beta} N_{b_p}! \sum_{\substack{k_1+\cdots+k_{\alpha} \\ = \beta + |\mathbf{N}|_{|\mathcal{B}\cup [\![ r+1,r+m ]\!]}}} \sum_{i=1}^{\alpha} \prod_{\substack{p=1 \\ p \neq i}}^{\alpha} \frac{\zeta(-N_{a_p}-k_p,x_{a_p})}{k_p!} \nonumber \\
    & \qquad \qquad \times \left(\frac{1}{\alpha} C^1_{j,\mathcal{A},\mathbf{k}}(\mathbf{c},\underset{\bm{\theta}}{\mathbf{N}}) \frac{\zeta(-N_{a_i}-k_i,x_{a_i})}{k_i!} + \theta_{a_i} C^0_{j,\mathcal{A},\mathbf{k}}(\mathbf{c},\mathbf{N}) \frac{\zeta'(-N_{a_i}-k_i,x_{a_i})}{k_i!} \right) \nonumber
\end{align}
where $C^0_{j,\mathcal{A},\mathbf{k}}(\mathbf{c},\mathbf{N})$ (resp. $C^1_{j,\mathcal{A},\mathbf{k}}(\mathbf{c},\underset{\bm{\theta}}{\mathbf{N}})$) is as defined in (\ref{eq:def_C_0}) (resp. in (\ref{eq:def_C_1})).
\end{proposition}

\begin{proof}
Differentiating (\ref{eq:analytic_continuation_directional_F}) with respect to $z$ and taking the limit at $z=0$, we obtain
\begin{align}
    F'(\mathbf{c},\mathbf{x},\underset{\bm{\theta}}{-\mathbf{N}},t) =& \sum_{\substack{\mathcal{A} \subseteq [\![1,r]\!] \\ 1 \leq j \leq m}} \prod_{p=1}^{\beta} N_{b_p}! \sum_{k_1,\ldots,k_{\alpha}\geq 0} (-1)^{|\mathbf{k}|} t^{|\mathbf{k}|-\beta-|\mathbf{N}|_{|\mathcal{B}\cup [\![ r+1,r+m ]\!]}} \label{eq:formula_F_prime_computations} \\
    & \qquad \qquad \times \big(T^{(1)}_{j,\mathcal{A},\mathbf{k}}+T^{(2)}_{j,\mathcal{A},\mathbf{k}}+T^{(3)}_{j,\mathcal{A},\mathbf{k}} \ln t\big), \nonumber
\end{align}
with 
\begin{align*}
    T^{(1)}_{j,\mathcal{A},\mathbf{k}}=& \lim_{z \to 0} \left( \frac{1}{\Gamma(-N_{r+j}+z\theta_{r+j})(|\mathbf{k}|-\beta-|\mathbf{N}|_{|\mathcal{B}\cup [\![ r+1,r+m ]\!]}+z|\bm{\theta}|_{|\mathcal{B}\cup [\![ r+1,r+m ]\!]})} \right) \\
    & \quad \times \sum_{i=1}^{\alpha} \Biggl[ \frac{\zeta(-N_{a_i}-k_i,x_{a_i})}{k_{i}!} \left( f'_{j,\mathcal{A},\mathbf{k}}(\mathbf{c},\underset{\bm{\theta}}{-\mathbf{N}}) + \sum_{p=1}^{\beta} \theta_{b_{p}}(h_{N_{b_p}}-\gamma) f_{j,\mathcal{A},\mathbf{k}}(\mathbf{c},-\mathbf{N}) \right) \\
    & \qquad \qquad \qquad + \theta_{a_i}\frac{\zeta'(-N_{a_i}-k_i,x_{a_i})}{k_{i}!} f_{j,\mathcal{A},\mathbf{k}}(\mathbf{c},-\mathbf{N}) \Biggr] \prod_{\substack{p=1 \\ p \neq i}}^{\alpha} \frac{\zeta(-N_{a_p}-k_p,x_{a_p})}{k_p!} \\
    T^{(2)}_{j,\mathcal{A},\mathbf{k}}=& \lim_{z \to 0} \partial_z \left( \frac{1}{\Gamma(-N_{r+j}+z\theta_{r+j})(|\mathbf{k}|-\beta-|\mathbf{N}|_{|\mathcal{B}\cup [\![ r+1,r+m ]\!]}+z|\bm{\theta}|_{|\mathcal{B} \cup [\![ r+1,r+m ]\!]})} \right) \\
    & \quad \times f_{j,\mathcal{A},\mathbf{k}}(\mathbf{c},-\mathbf{N}) \prod_{p=1}^{\alpha} \frac{\zeta(-N_{a_p}-k_p,x_{a_p})}{k_p!} \\
    T^{(3)}_{j,\mathcal{A},\mathbf{k}}=& \lim_{z \to 0} \left( \frac{1}{\Gamma(-N_{r+j}+z\theta_{r+j})(|\mathbf{k}|-\beta-|\mathbf{N}|_{|\mathcal{B}\cup [\![ r+1,r+m ]\!]}+z|\bm{\theta}|_{|\mathcal{B}\cup [\![ r+1,r+m ]\!]})} \right) \\
    & \quad \times |\bm{\theta}|_{|\mathcal{B}\cup [\![ r+1,r+m ]\!]} f_{j,\mathcal{A},\mathbf{k}}(\mathbf{c},-\mathbf{N}) \prod_{p=1}^{\alpha} \frac{\zeta(-N_{a_p}-k_p,x_{a_p})}{k_p!}.
\end{align*}

It remains to compute the limits for each term. Let us fix $\mathcal{A} = \{ a_1,\ldots,a_{\alpha} \} \subseteq [\![ 1,r ]\!]$, $j \in [\![ 1,m ]\!]$, $\mathcal{B} = \{ b_1,\ldots,b_{\beta} \} := [\![ 1,r ]\!] \setminus \mathcal{A}$, and $\mathbf{k}=(k_1,\ldots,k_{\alpha})$. Thanks to Lemma \ref{lem:inverse_gamma_formula}, we have
\begin{align*}
    &\frac{1}{\Gamma(-N_{r+j}+z\theta_{r+j})(|\mathbf{k}|-\beta-|\mathbf{N}|_{|\mathcal{B}\cup [\![ r+1,r+m ]\!]}+z|\bm{\theta}|_{|\mathcal{B}\cup [\![ r+1,r+m ]\!]})} \\
    & \qquad \underset{z \to 0}{=}
    \begin{cases}
        &\frac{(-1)^{N_{r+j}}\theta_{r+j}N_{r+j}!}{|\mathbf{k}|-\beta-|\mathbf{N}|_{|\mathcal{B}\cup [\![ r+1,r+m ]\!]}}z+O(z^2) \qquad \text{if } |\mathbf{k}| \neq \beta+|\mathbf{N}|_{|\mathcal{B}\cup [\![ r+1,r+m ]\!]} \\
        &\frac{(-1)^{N_{r+j}}\theta_{r+j}N_{r+j}!}{|\bm{\theta}|_{|\mathcal{B}\cup [\![ r+1,r+m ]\!]})} + \frac{(-1)^{N_{r+j}}\theta_{r+j}^2N_{r+j}!}{|\bm{\theta}|_{|\mathcal{B}\cup [\![ r+1,r+m ]\!]})}\left(\gamma-h_{N_{r+j}} \right)z + O(z^2) \qquad \text{else.}
    \end{cases}
\end{align*}
Therefore, many terms actually vanish depending on an equality satisfied by $|\mathbf{k}|$.
\begin{enumerate}
\item[i)] If $|\mathbf{k}| \neq \beta+|\mathbf{N}|_{|\mathcal{B}\cup [\![ r+1,r+m ]\!]}$ we find that $T^{(1)}_{j,\mathcal{A},\mathbf{k}}=T^{(3)}_{j,\mathcal{A},\mathbf{k}}=0$. In particular, if $\mathcal{A}=\emptyset$ then $\mathbf{k}$ corresponds to the empty tuple $\emptyset$, thus $|\mathbf{k}|=0<\beta=m$ and then $T^{(1)}_{j,\emptyset,\mathbf{k}}=T^{(3)}_{j,\emptyset,\mathbf{k}}=0$.
\item[ii)] If $|\mathbf{k}| = \beta+|\mathbf{N}|_{|\mathcal{B}\cup [\![ r+1,r+m ]\!]}$, we find
\begin{align}
    &T^{(1)}_{j,\mathcal{A},\mathbf{k}}=\frac{(-1)^{N_{r+j}}\theta_{r+j}N_{r+j}!}{|\bm{\theta}|_{|\mathcal{B}\cup [\![ r+1,r+m ]\!]})} \label{eq:formula_T_1} \\
    & \quad \sum_{i=1}^{\alpha} \Biggl[ \left( f'_{j,\mathcal{A},\mathbf{k}}(\mathbf{c},\underset{\bm{\theta}}{-\mathbf{N}}) + f_{j,\mathcal{A},\mathbf{k}}(\mathbf{c},-\mathbf{N})\sum_{p=1}^{\beta} \theta_{b_{p}}(h_{N_{b_p}}-\gamma) \right) \frac{\zeta(-N_{a_i}-k_i,x_{a_i})}{k_{i}!} \nonumber \\
    & \qquad \qquad \qquad + \theta_{a_i} f_{j,\mathcal{A},\mathbf{k}}(\mathbf{c},-\mathbf{N}) \frac{\zeta'(-N_{a_i}-k_i,x_{a_i})}{k_{i}!} \Biggr] \prod_{\substack{p=1 \\ p \neq i}}^{\alpha} \frac{\zeta(-N_{a_p}-k_p,x_{a_p})}{k_p!} \nonumber \\
    &T^{(2)}_{j,\mathcal{A},\mathbf{k}}=\frac{(-1)^{N_{r+j}}\theta_{r+j}^2N_{r+j}!}{|\bm{\theta}|_{|\mathcal{B}\cup [\![ r+1,r+m ]\!]})}\left(\gamma-h_{N_{r+j}} \right) f_{j,\mathcal{A},\mathbf{k}}(\mathbf{c},-\mathbf{N}) \prod_{p=1}^{\alpha} \frac{\zeta(-N_{a_p}-k_p,x_{a_p})}{k_p!}. \label{eq:formula_T_2}
\end{align}
\end{enumerate}

Using i) and ii), and splitting the series $\sum_{k_1,\ldots,k_{\alpha} \geq 0}$ in (\ref{eq:formula_F_prime_computations}) we obtain
\begin{align}
    F'(\mathbf{c},\mathbf{x},\underset{\bm{\theta}}{-\mathbf{N}},t) = F^*(\mathbf{c},\mathbf{x},\underset{\bm{\theta}}{-\mathbf{N}},t&) + \sum_{\substack{\emptyset \neq \mathcal{A} \subseteq [\![1,r]\!] \\ 1 \leq j \leq m}} (-1)^{\beta+|\mathbf{N}|_{|\mathcal{B}\cup [\![ r+1,r+m ]\!]}} \prod_{p=1}^{\beta} N_{b_p}! \label{eq:formula_F_prime_computations_bis} \\
    & \ \times \sum_{k_1+\cdots+k_{\alpha}=\beta+|\mathbf{N}|_{|\mathcal{B}\cup [\![ r+1,r+m ]\!]}} \big(T^{(1)}_{j,\mathcal{A},\mathbf{k}}+T^{(2)}_{j,\mathcal{A},\mathbf{k}}\big) \nonumber
\end{align}
where
\begin{align*}
    &F^*(\mathbf{c},\mathbf{x},\underset{\bm{\theta}}{-\mathbf{N}},t) := \sum_{\substack{\emptyset \neq \mathcal{A} \subseteq [\![1,r]\!] \\ 1 \leq j \leq m}} (-1)^{\beta+|\mathbf{N}|_{|\mathcal{B}\cup [\![ r+1,r+m ]\!]}} \prod_{p=1}^{\beta} N_{b_p}! \\
    & \qquad \qquad \qquad \times \sum_{k_1+\cdots+k_{\alpha}=\beta+|\mathbf{N}|_{|\mathcal{B}\cup [\![ r+1,r+m ]\!]}} T^{(3)}_{j,\mathcal{A},\mathbf{k}} \ln t \\
    & + \sum_{\substack{\mathcal{A} \subseteq [\![1,r]\!] \\ 1 \leq j \leq m}} \prod_{p=1}^{\beta} N_{b_p}! \sum_{k_1+\cdots+k_{\alpha} \neq \beta+|\mathbf{N}|_{|\mathcal{B}\cup [\![ r+1,r+m ]\!]}} (-1)^{|\mathbf{k}|} T^{(2)}_{j,\mathcal{A},\mathbf{k}} t^{|\mathbf{k}|-\beta-|\mathbf{N}|_{|\mathcal{B}\cup [\![ r+1,r+m ]\!]}}.
\end{align*}
By setting 
\begin{align}
    C_{j,\mathcal{A},\mathbf{k}}^1(\mathbf{c},\underset{\bm{\theta}}{\mathbf{N}})&:=N_{r+j}! \prod^m_{\substack{q=1 \\ q \neq j}}(-1)^{N_{r+q}} \label{eq:def_C_1} \\
    & \times \left[ f'_{j,\mathcal{A},\mathbf{k}}(\mathbf{c},\underset{\bm{\theta}}{-\mathbf{N}}) + f_{j,\mathcal{A},\mathbf{k}}(\mathbf{c},-\mathbf{N}) \left( \theta_{r+j} (h_{N_{r+j}}-\gamma) + \sum_{p=1}^{\beta} \theta_{b_{p}}(h_{N_{b_p}}-\gamma) \right) \right] \nonumber \\
    &= (-1)^{N_{r+1}+\cdots+\widehat{N_{r+j}}+\cdots+N_{r+m}} N_{r+j}! f'_{j,\mathcal{A},\mathbf{k}}(\mathbf{c},\underset{\bm{\theta}}{-\mathbf{N}}) \label{eq:def_C_1_bis} \\
    & \quad + C_{j,\mathcal{A},\mathbf{k}}^0(\mathbf{c},\underset{\bm{\theta}}{\mathbf{N}}) \left( \theta_{r+j} (h_{N_{r+j}}-\gamma) + \sum_{p=1}^{\beta} \theta_{b_{p}}(h_{N_{b_p}}-\gamma) \right), \nonumber	
\end{align}
and by plugging formulas (\ref{eq:formula_T_1}) and (\ref{eq:formula_T_2}) into (\ref{eq:formula_F_prime_computations_bis}) we obtain the required formula.
\end{proof}

\subsection{Computation of \texorpdfstring{$H'(\mathbf{c},\mathbf{x},\protect \underset{\bm{\theta}}{-\mathbf{N}},t)$}{directionalderivativeh}}
\label{subsection:computation_H_prime}

By (\ref{eq:def_H_primaire}), we have
\begin{align}
    H(\mathbf{c},\mathbf{x},-\mathbf{N}+z\bm{\theta},t) =& \sum_{\substack{\emptyset \neq \mathcal{A} \subseteq [\![1,m]\!] \\ \mathcal{C} \subseteq \mathcal{A}^c}} \frac{(-1)^{|\mathcal{A}^c \setminus \mathcal{C}|}}{\prod_{q \in [\![1,m]\!] \setminus \mathcal{C}} \Gamma(-N_{r+q}+z\theta_{r+q})} \label{eq:def_H_direction} \\
    & \times \sum_{n_1,\ldots,n_r \geq 0} \frac{\prod_{q \in [\![ 1,m ]\!] \setminus \mathcal{C}} \Gamma(-N_{r+q}+z\theta_{r+q},t,l_q(\mathbf{n}+\mathbf{x}))}{\prod_{p=1}^{r} (n_p+x_p)^{-N_p+z\theta_p} \prod_{q \in \mathcal{C}} l_q(\mathbf{n}+\mathbf{x})^{-N_{r+q}+z\theta_{r+q}}} \nonumber
\end{align}
for any $t>0$ and any $z \in \mathbb{C}$. Thanks to Proposition \ref{prop:normally_convergent_incomplete_gamma}, we know that all the series above are normally convergent with respect to $z$ over any compact subset of $\mathbb{C}$. In particular, we can differentiate the right-hand side of (\ref{eq:def_H_direction}) termwise with respect to $z$. By taking the limit at $z=0$ of such derivatives, many terms vanish because $\frac{1}{\Gamma}$ has a zero of order $1$ at nonpositive integers. Therefore, we obtain the following relation
\begin{align*}
    H'(\mathbf{c}&,\mathbf{x},\underset{\bm{\theta}}{-\mathbf{N}},t) = \sum_{j=1}^{m} \lim_{z \to 0} \partial_z \left( \frac{1}{\Gamma(-N_{r+j}+z\theta_{r+j})} \right) \nonumber \\
    & \times \sum_{n_1,\ldots,n_r \geq 0} \Gamma(-N_{r+j},t,l_j(\mathbf{n}+\mathbf{x})) \prod_{p=1}^{r} (n_p+x_p)^{N_p} \prod_{\substack{q=1 \\ q \neq j}}^m (l_q(\mathbf{n}+\mathbf{x}))^{N_{r+q}}. \nonumber
\end{align*}
By Lemma \ref{lem:inverse_gamma_formula} and by Newton's multinomial, we get
\begin{align}
    H'(\mathbf{c}&,\mathbf{x},\underset{\bm{\theta}}{-\mathbf{N}},t) = \sum_{j=1}^m \theta_{r+j} \sum^{\widehat{j}}_{\substack{u_{1,1}+\cdots+u_{1,r}=N_{r+1} \\ \cdots \\ u_{m,1}+\cdots+u_{m,r}=N_{r+m}}} \prod_{\substack{ q=1 \\ q \neq j}}^{m} \left( \binom{N_{r+q}}{\mathbf{u}_{q \bullet}} \prod_{p=1}^{r} c_{q,p}^{u_{q,p}} \right) \label{eq:formula_H_prime_series} \\
    & \times (-1)^{N_{r+j}}N_{r+j}! \sum_{n_1,\ldots,n_{r} \geq 0} \Gamma(-N_{r+j},t,l_j(\mathbf{n}+\mathbf{x})) \prod_{p=1}^{r} (n_p+x_p)^{N_p+|\mathbf{u}_{\bullet p}|}. \nonumber
\end{align}
We observe that this last expression is not very explicit due to the presence of the series on the right-hand side of (\ref{eq:formula_H_prime_series}). In §\ref{subsubsection:crandall_expansion_barnes}, we will show via another Crandall's expansion that this series can be written as a derivative value of a generalized Barnes zeta function, plus an explicit constant term.

\subsubsection{Crandall's expansion for generalized Barnes zeta function}
\label{subsubsection:crandall_expansion_barnes}

We fix $\mathbf{M}=(M_1,\ldots,M_r) \in \mathbb{Z}_{\geq 0}^r$, $j \in [\![ 1,m ]\!]$. By applying Corollary \ref{cor:crandall_expansion_zeta}, Proposition \ref{prop:analytic_continuation_F_bis} and (\ref{eq:def_H_primaire}) with suitable data for the generalized Barnes zeta function
$$ \zeta_B(s,\mathbf{M},\mathbf{x}|\mathbf{c}_{j\bullet}) = \sum_{n_1,\ldots,n_r \geq 0} \frac{(n_1+x_1)^{M_1}\cdots(n_r+x_r)^{M_r}}{(c_{j,1}(n_1+x_1)+\cdots+c_{j,r}(n_r+x_r))^s} \qquad (\Re(s) \gg 1), $$
we obtain the following Crandall's expansion for the generalized Barnes zeta function $\zeta_B(s,\mathbf{M},\mathbf{x}|\mathbf{c}_{j\bullet})$:
\begin{equation}
    \label{eq:crandall_expansion_generalized_barnes}
    \zeta_B(s,\mathbf{M},\mathbf{x}|\mathbf{c}_{j\bullet}) = H_{\mathbf{M}}(\mathbf{c}_{j \bullet},\mathbf{x},s,t)+F_{\mathbf{M}}(\mathbf{c}_{j \bullet},\mathbf{x},s,t), 
\end{equation}
where $t>0$ is sufficiently small, and
\begin{align}
    H_{\mathbf{M}}(\mathbf{c}_{j \bullet},\mathbf{x},s,t)& := \frac{1}{\Gamma(s)} \sum_{n_1,\ldots,n_{r} \geq 0} \Gamma(s,t,l_j(\mathbf{n}+\mathbf{x})) \prod_{p=1}^{r} (n_p+x_p)^{M_p} \qquad (s \in \mathbb{C}), \label{eq:h_generalized_barnes_formula} \\
    F_{\mathbf{M}}(\mathbf{c}_{j \bullet},\mathbf{x},s,t)& := \sum_{\mathcal{A} \subseteq [\![1,r]\!]} \prod_{p=1}^{\beta} \frac{M_{b_p}!}{c_{j,b_p}^{M_{b_p}+1}} \sum_{k_1,\ldots,k_{\alpha} \geq 0} (-1)^{|\mathbf{k}|} \prod_{p=1}^{\alpha} \frac{c_{j,a_p}^{k_p}\zeta(-M_{a_p}-k_p,x_{a_p})}{k_p!} \label{eq:j_generalized_barnes_formula} \\
    & \times \frac{t^{s-|\mathbf{M}|_{|\mathcal{B}}-\beta+|\mathbf{k}|}}{\Gamma(s)(s-|\mathbf{M}|_{|\mathcal{B}}-\beta+|\mathbf{k}|)}  \qquad \qquad \qquad (s \neq 1,\ldots,|\mathbf{M}|+r). \nonumber
\end{align}
We denote by $H'_{\mathbf{M}}$ and $F'_{\mathbf{M}}$ the derivatives with respect to $s$ of the two functions defined above. We now give an expression for both of these functions at nonpositive integers.

\begin{proposition}
\label{prop:derivative_values_generalized_barnes}
The function $s \mapsto \zeta_B(s,\mathbf{M},\mathbf{x}|\mathbf{c}_{j\bullet})$ is meromorphic over $\mathbb{C}$, and is regular at nonpositive integers. Moreover, there exists a function with no constant term $F^*_{\mathbf{M}}(\mathbf{c}_{j \bullet},\mathbf{x},-N_{r+j},t) = \mu \ln t + \sum_{n=\ell}^{+\infty} \lambda_n t^{n}$ defined for all $0 < t \ll 1$ where $\ell \in \mathbb{Z}_{\leq 0}$, such that
\begin{align}
    F'_{\mathbf{M}}(\mathbf{c}&_{j \bullet},\mathbf{x},-N_{r+j},t) = F^*_{\mathbf{M}}(\mathbf{c}_{j \bullet},\mathbf{x},-N_{r+j},t) +N_{r+j}! (\gamma-h_{N_{r+j}}) \label{eq:derivative_values_f_generalized_barnes} \\
    & \times \sum_{\emptyset \neq \mathcal{A} \subseteq [\![1,r]\!]} \prod_{p=1}^{\beta} \frac{M_{b_p}!}{(-c_{j,b_p})^{M_{b_p}+1}} \sum_{k_1+\cdots+k_{\alpha}=N_{r+j}+|\mathbf{M}|_{|\mathcal{B}}+\beta} \prod_{p=1}^{\alpha} \frac{c_{j,a_p}^{k_p}\zeta(-M_{a_p}-k_p,x_{a_p})}{k_p!} \nonumber \\
    H'_{\mathbf{M}}(\mathbf{c}&_{j \bullet},\mathbf{x},-N_{r+j},t) = (-1)^{N_{r+j}} N_{r+j}! \sum_{n_1,\ldots,n_r \geq 0} \Gamma(-N_{r+j},t,l_j(\mathbf{n}+\mathbf{x})) \prod_{p=1}^{r} (n_p+x_p)^{M_p}. \label{eq:derivative_values_h_generalized_barnes}
\end{align}
\end{proposition}

\begin{proof}
Its meromorphic continuation is given by (\ref{eq:crandall_expansion_generalized_barnes}). Differentiating (\ref{eq:h_generalized_barnes_formula}) with respect to $s$, and using Lemma \ref{lem:inverse_gamma_formula}, we get (\ref{eq:derivative_values_h_generalized_barnes}). The formula (\ref{eq:derivative_values_f_generalized_barnes}) follows from similar arguments as in the proof of Proposition \ref{prop:formule_F_prime}.
\end{proof}

\subsubsection{Explicit formula for \texorpdfstring{$H'(\mathbf{c},\mathbf{x},\underset{\bm{\theta}}{-\mathbf{N}},t)$}{directionalderivativeh}}

Via Proposition \ref{prop:derivative_values_generalized_barnes} we eliminate the non-explicit series on the right-hand side of (\ref{eq:formula_H_prime_series})
\begin{align}
    H'(\mathbf{c},\mathbf{x},\underset{\bm{\theta}}{-\mathbf{N}},t) = \sum_{j=1}^m & \theta_{r+j} \sum^{\widehat{j}}_{\substack{u_{1,1}+\cdots+u_{1,r}=N_{r+1} \\ \cdots \\ u_{m,1}+\cdots+u_{m,r}=N_{r+m}}} \prod_{\substack{ q=1 \\ q \neq j}}^{m} \left( \binom{N_{r+q}}{\mathbf{u}_{q \bullet}} \prod_{p=1}^{r} c_{q,p}^{u_{q,p}} \right) \label{eq:formula_H_prime_computation_bis} \\
    & \times \left( \zeta_B'(-N_{r+j},\mathbf{M}(\mathbf{N},j,\mathbf{u}),\mathbf{x}|\mathbf{c}) - F'_{\mathbf{M}(\mathbf{N},j,\mathbf{u})}(-N_{r+j},\mathbf{c}_{j \bullet},\mathbf{x},t) \right) \nonumber
\end{align}
where $\mathbf{M}(\mathbf{N},j,\mathbf{u})$ is the tuple defined in (\ref{eq:def_tuple_M}). By plugging (\ref{eq:derivative_values_f_generalized_barnes}) into (\ref{eq:formula_H_prime_computation_bis}), we obtain a function with no constant term $H^*_{\mathbf{M}}(\mathbf{c}_{j \bullet},\mathbf{x},-N_{r+j},t) = \mu \ln t + \sum_{n=\ell}^{+\infty} \lambda_n t^{n}$ defined for all $0 < t \ll 1$ where $\ell \in \mathbb{Z}_{\leq 0}$, such that
\begin{align}
    H'&(\mathbf{c},\mathbf{x},\underset{\bm{\theta}}{-\mathbf{N}},t) = H^*(\mathbf{c},\mathbf{x},\underset{\bm{\theta}}{-\mathbf{N}},t) \label{eq:formula_H_prime} \\
    & + \sum_{j=1}^{m} \theta_{r+j} \sum^{\widehat{j}}_{\substack{u_{1,1}+\cdots+u_{1,r}=N_{r+1} \\ \cdots \\ u_{m,1}+\cdots+u_{m,r}=N_{r+m}}} \prod_{\substack{ q=1 \\ q \neq j}}^{m}  \left( \binom{N_{r+q}}{\mathbf{u}_{q\bullet}} \prod_{p=1}^{r} c_{q,p}^{u_{q,p}} \right) \Bigg[ \zeta_B'(-N_{r+j},\mathbf{M}(\mathbf{N},j,\mathbf{u}),\mathbf{x}|\mathbf{c}_{j\bullet}) \nonumber \\
    & - (\gamma-h_{N_{r+j}}) N_{r+j}! \sum_{\emptyset \neq \mathcal{A} \subseteq [\![1,r]\!]} \prod_{p=1}^{\beta} \frac{(N_{b_p}+|\mathbf{u}_{\bullet b_p}|)!}{(-c_{j,b_p})^{N_{b_p}+|\mathbf{u}_{\bullet b_p}|+1}} \nonumber \\
    & \quad \times \sum_{k_1+\cdots+k_{\alpha}= N_{r+j}+\sum_{p=1}^{\beta} (N_{b_p}+|\mathbf{u}_{\bullet b_p}|+1)} \prod_{p=1}^{\alpha} \frac{c_{j,a_p}^{k_p}\zeta(-N_{a_p}-|\mathbf{u}_{\bullet a_p}|-k_p,x_{a_p})}{k_p!} \Bigg]. \nonumber
\end{align}

\subsection{Proof of Theorem \ref{th:directional_derivative_values_mzf}}
\label{subsection:proof_theorem_derivative_value}

By differentiating Crandall's expansion (\ref{eq:crandall_expansion_zeta}), recall that we obtain
$$ \zeta'(\mathbf{c},\mathbf{x},\underset{\bm{\theta}}{-\mathbf{N}})=F'(\mathbf{c},\mathbf{x},\underset{\bm{\theta}}{-\mathbf{N}},t) + H'(\mathbf{c},\mathbf{x},\underset{\bm{\theta}}{-\mathbf{N}},t), $$
for $t>0$ sufficiently small. A crucial step for the proof of Theorem \ref{th:directional_derivative_values_mzf} relies on the linear independence of $(\ln t,(t^n)_{n \geq \ell})$.

\begin{lemma}
\label{lem:linear_independence_laurent_series}
Let $\ell \in \mathbb{Z}_{\leq 0}$, $(\lambda_n)_{n \in \mathbb{Z}_{\geq \ell}}$ and $(\mu_n)_{n \in \mathbb{Z}_{\geq \ell}}$ be two sequences of complex numbers such that the power series $\sum_{n=0}^{+\infty} \lambda_n y^n$ and $\sum_{n=0}^{+\infty}\mu_n y^n$ have a positive radius of convergence $R$. Let us also assume that
$$ \forall y \in (0,R), \quad \sum_{n=\ell}^{+\infty} \lambda_n y^n + \sum_{n=\ell}^{+\infty} \mu_n y^{n} \ln y = 0. $$
Then for all $n \in \mathbb{Z}_{\geq \ell}$, we have $\lambda_n=0$ and $\mu_n=0$.
\end{lemma}

\begin{proof}
If all $\mu_n$ vanish (resp. all $\lambda_n$ vanish) for all $n \geq \ell$, the result is trivial. Otherwise, let us assume that there exist two integers $i, j \geq \ell$ such that $\lambda_i \neq 0$ and $\mu_j \neq 0$. Suppose $i = \min \{n; \lambda_n \neq 0 \}$ and $j = \min \{n; \mu_n \neq 0 \}$, then $\lambda_i y^i+\mu_j y^j \ln y + o(y^j\ln y)+o(y^i)\underset{y \to 0}{=}0$, thus $\mu_j y^{j-i}\ln y \underset{y \to 0}{\sim} \lambda_i$, hence the contradiction.
\end{proof}

By adding (\ref{eq:directional_derivative_formula_F_s0}) and (\ref{eq:formula_H_prime}), we obtain that $\zeta'(\mathbf{c},\mathbf{x},\underset{\bm{\theta}}{-\mathbf{N}})$ is equal to a constant term (with respect to $t$) that corresponds to the right-hand side of (\ref{eq:zeta_prime_N}), plus a function in the variable $t$ with no constant term $H^*(\mathbf{c},\mathbf{x},\underset{\bm{\theta}}{-\mathbf{N}},t)+F^*(\mathbf{c},\mathbf{x},\underset{\bm{\theta}}{-\mathbf{N}},t)$. This equality holds for all $t>0$ sufficiently small, yet $\zeta'(\mathbf{c},\mathbf{x},\underset{\bm{\theta}}{-\mathbf{N}})$ does not depend on $t$, therefore by Lemma \ref{lem:linear_independence_laurent_series} we obtain for all $t>0$ sufficiently small,
$$ H^*(\mathbf{c},\mathbf{x},\underset{\bm{\theta}}{-\mathbf{N}},t)+F^*(\mathbf{c},\mathbf{x},\underset{\bm{\theta}}{-\mathbf{N}},t)=0. $$
\qed

\section{Computations of \texorpdfstring{$C^0_{\protect \lowercase{j}, \mathcal{A},\mathbf{\lowercase{k}}}(\lowercase{c},\mathbf{N})$}{C0} and \texorpdfstring{$C^1_{\protect \lowercase{j}, \mathcal{A},\mathbf{\lowercase{k}}}(\lowercase{c},\underset{\bm{\theta}}{\mathbf{N}})$}{C1}}
\label{section:computations_C}

Throughout this section, we fix a set $\emptyset \neq \mathcal{A}=\{ a_1,\ldots,a_{\alpha} \} \subseteq [\![ 1,r ]\!]$, a tuple $\mathbf{k}=(k_1,\ldots,k_{\alpha}) \in \mathbb{Z}_{\geq 0}^{\alpha}$, and for convenience we also set $\mathcal{B}=\{ b_1,\ldots,b_{\beta}\} := [\![ 1,r ]\!] \setminus \mathcal{A}$.

We recall that $C^0_{j,\mathcal{A},\mathbf{k}}(\mathbf{c},\mathbf{N})$ (resp. $C^1_{j,\mathcal{A},\mathbf{k}}(\mathbf{c},\underset{\bm{\theta}}{\mathbf{N}})$) is defined in (\ref{eq:def_C_0}) (resp. (\ref{eq:def_C_1})) by the value $f_{j,\mathcal{A},\mathbf{k}}(\mathbf{c},-\mathbf{N})$ (resp. by the values $f_{j,\mathcal{A},\mathbf{k}}(\mathbf{c},-\mathbf{N})$ and $f'_{j,\mathcal{A},\mathbf{k}}(\mathbf{c},\underset{\bm{\theta}}{-\mathbf{N}})$). Also recall that we can derive from (\ref{eq:expression_f_Ajepsilon_s}) two explicit expressions of $f_{j,\mathcal{A},\mathbf{k}}(\mathbf{c},-\mathbf{N})$ and $f'_{j,\mathcal{A},\mathbf{k}}(\mathbf{c},\underset{\bm{\theta}}{-\mathbf{N}})$.

\begin{notation}
Let $\mathcal{A}'=\{ a'_1,\ldots,a'_{\alpha'} \} \subseteq [\![ 1,m ]\!] \setminus \{ j \}$, $\mathcal{B}'=\{ b'_1,\ldots,b'_{\beta'} \} := [\![ 1,m ]\!] \setminus ( \mathcal{A}' \cup \{ j \})$. We consider two tuples 
\begin{align*}
    \mathbf{u}=&(u_{1,1},\ldots,u_{\alpha',1},\ldots,u_{1,\beta},\ldots,u_{\alpha',\beta}), \\
    \mathbf{v}=&(v_{1,1},\ldots,v_{m,1},\ldots,v_{1,\alpha},\ldots,v_{m,\alpha}),
\end{align*}
where every component is a nonnegative integer. We denote
\begin{align}
    T&_{j,\mathcal{A},\mathbf{k},\mathcal{A}',\mathbf{u},\mathbf{v}}(\mathbf{c},\mathbf{s},\varepsilon) := \frac{1}{\prod_{\substack{q=1 \\ q \neq j}}^{m} \Gamma(s_{r+q})}\frac{\varepsilon^{ \sum_{q=1}^{\alpha'} \left(s_{r+a'_q}+|\mathbf{u}_{q \bullet}| + |\mathbf{v}_{q \bullet}|\right)}}{\prod_{q=1}^{\alpha'} (s_{r+a'_q}+ |\mathbf{u}_{q \bullet}| + |\mathbf{v}_{q \bullet}|)} \nonumber \\
    & \quad \times \prod_{p=1}^{\alpha} \left(\binom{k_p}{\mathbf{v}_{\bullet p}} \prod_{q=1}^m c_{q,a_p}^{v_{q,p}} \right) \prod_{p=1}^{\beta} \left( \binom{s_{b_p}-1}{|\mathbf{u}_{\bullet p}|} \binom{|\mathbf{u}_{\bullet p}|}{\mathbf{u}_{\bullet p}} \prod_{q=1}^{\alpha'} c_{a'_q,b_p}^{u_{q,p}} \right) \label{eq:expression_T_Ajepsilon_s} \\
    & \quad \times \int_{\varepsilon}^{1} \cdots \int_{\varepsilon}^1 \prod_{p=1}^{\beta} \left( c_{j,b_p}+\sum_{q=1}^{\beta'} c_{b'_q,b_p} y_{q} \right)^{s_{a_p}-1-|\mathbf{u}_{\bullet p}|} \prod_{q=1}^{\beta'} y_q^{s_{r+b'_q}-1+ |\mathbf{v}_{q \bullet}|} \dd y_1 \cdots \dd y_{\beta'}. \nonumber
\end{align}
For convenience, since we already fixed the data $\mathbf{c}$, $\mathcal{A}$, $j$ and $\mathbf{k}$ for this section, we simply write $T_{\mathcal{A}',\mathbf{u},\mathbf{v}}(\mathbf{s},\varepsilon):=T_{j,\mathcal{A},\mathbf{k},\mathcal{A}',\mathbf{u},\mathbf{v}}(\mathbf{c},\mathbf{s},\varepsilon)$.
\end{notation}

Recall that we proved in Proposition \ref{prop:expression_f_Ajepsilon_s} that the series on the right-hand side of (\ref{eq:expression_f_Ajepsilon_s}) is normally convergent on any compact subset of $\mathbb{C}^{r+m}$ in the variables $(s_1,\ldots,s_{r+m})$. Therefore, by exchanging the limits and series, and by plugging the resulting formulas for $f_{j,\mathcal{A},\mathbf{k}}(\mathbf{c},-\mathbf{N})$ and $f'_{j,\mathcal{A},\mathbf{k}}(\mathbf{c},\underset{\bm{\theta}}{-\mathbf{N}})$ into (\ref{eq:def_C_0}) and (\ref{eq:def_C_1_bis}), we find
\begin{align}
    &C^0_{j,\mathcal{A},\mathbf{k}}(\mathbf{c},\mathbf{N}) = (-1)^{N_{r+1}+\cdots+\widehat{N_{r+j}}+\cdots+N_{r+m}}N_{r+j}! \label{eq:expression_C_0_series} \\
    & \qquad \times \sum_{\mathcal{A}' \subseteq [\![ 1,m ]\!] \setminus \{ j \}} \sum_{\substack{ u_{1,1},\ldots,u_{\alpha',\beta} \geq 0 \\ v_{1,1}+\cdots+v_{m,1}=k_1 \\ \cdots \\ v_{1,\alpha}+\cdots+v_{m,\alpha}=k_{\alpha}}} T_{\mathcal{A}',\mathbf{u},\mathbf{v}}(-\mathbf{N},\varepsilon) \nonumber \\
    &C^1_{j,\mathcal{A},\mathbf{k}}(\mathbf{c},\underset{\bm{\theta}}{\mathbf{N}}) = C^0_{j,\mathcal{A},\mathbf{k}}(\mathbf{c},\mathbf{N})\left( \theta_{r+j} (h_{N_{r+j}}-\gamma) + \sum_{p=1}^{\beta} \theta_{b_{p}}(h_{N_{b_p}}-\gamma)\right) \label{eq:expression_C_1_series} \\
     & \quad + (-1)^{N_{r+1}+\cdots+\widehat{N_{r+j}}+\cdots+N_{r+m}} N_{r+j}! \sum_{\mathcal{A}' \subseteq [\![ 1,m ]\!] \setminus \{ j \}} \sum_{\substack{ u_{1,1},\ldots,u_{\alpha',\beta} \geq 0 \\ v_{1,1}+\cdots+v_{m,1}=k_1 \\ \cdots \\ v_{1,\alpha}+\cdots+v_{m,\alpha}=k_{\alpha}}} T'_{\mathcal{A}',\mathbf{u},\mathbf{v}}(\underset{\bm{\theta}}{-\mathbf{N}},\varepsilon) \nonumber
\end{align}
where we set $T'_{\mathcal{A}',\mathbf{u},\mathbf{v}}(\underset{\bm{\theta}}{-\mathbf{N}},\varepsilon) = \lim_{z \to 0} \partial_z (T_{\mathcal{A}',\mathbf{u},\mathbf{v}}(-\mathbf{N}+z \bm{\theta},\varepsilon))$. We observe that both values $T_{\mathcal{A}',\mathbf{u},\mathbf{v}}(-\mathbf{N},\varepsilon)$ and $T'_{\mathcal{A}',\mathbf{u},\mathbf{v}}(\underset{\bm{\theta}}{-\mathbf{N}},\varepsilon)$ vanish for most of the tuples $\mathbf{u}$ and $\mathbf{v}$.

\begin{lemma}
\label{lem:asymptotic_behavior_summand_h}
Let $\varepsilon \in (0,1)$, we have
\begin{align*}
    T&_{\mathcal{A}',\mathbf{u},\mathbf{v}}(-\mathbf{N}+z\bm{\theta},\varepsilon) \\
    & \underset{z \to 0}{=}
    \begin{cases}
        O(1) & \text{if } \mathcal{A}' =[\![ 1,m ]\!]\setminus \{ j \}, \text{ and } \forall q \in \mathcal{A}', |\mathbf{u}_{q \bullet}|+|\mathbf{v}_{q \bullet}|=N_{r+q} \\ 
        O(z) & \text{if } \mathcal{A}' =[\![ 1,m ]\!]\setminus \{ j \} \text{ and } \exists \ell \in \mathcal{A}' \setminus \{ j \} \text{ s.t. } \\
        & \hspace{48pt} |\mathbf{u}_{\ell \bullet}|+|\mathbf{v}_{\ell \bullet}| \neq N_{r+f} \text{ and } \forall q \in \mathcal{A}' \setminus \{ \ell \}, |\mathbf{u}_{q \bullet}|+|\mathbf{v}_{q \bullet}|=N_{r+q} \\
        O(z) & \text{if } \mathcal{A}' =[\![ 1,m ]\!]\setminus \{ j,\ell \} \text{ and } \forall q \in \mathcal{A}', |\mathbf{u}_{q \bullet}|+|\mathbf{v}_{q \bullet}|=N_{r+q} \\
        O(z^2) & \text{else}.
    \end{cases}
\end{align*}
\end{lemma}

\begin{proof}
Observe that the second and third lines on the right-hand side of (\ref{eq:expression_T_Ajepsilon_s}) define entire functions in the variables $(s_1,\ldots,s_{r+m})$. By substituting $\mathbf{s}=-\mathbf{N}+z\bm{\theta}$ in (\ref{eq:expression_T_Ajepsilon_s}), and recalling that the function $\frac{1}{\Gamma}$ has a zero of order $1$ at nonpositive integers, we find that the function $z \mapsto T_{\mathcal{A}',\mathbf{u},\mathbf{v}}(-\mathbf{N}+z\bm{\theta},\varepsilon)$ has a zero at $z=0$ of order
$$ m-1 - \Big|\{ q \in [\![ 1,\alpha' ]\!] ; |\mathbf{u}_{q \bullet}|+|\mathbf{v}_{q \bullet}| = N_{r+a'_q} \}\Big|. $$
\end{proof}

\subsection{Computation of \texorpdfstring{$C^0_{\protect \lowercase{j},\mathcal{A},\mathbf{\lowercase{k}}}(\mathbf{\lowercase{c}},\mathbf{N})$}{C0}}
\label{subsection:computation_C_0}

Considering (\ref{eq:expression_C_0_series}), it is enough to study the value $T_{\mathcal{A}',\mathbf{u},\mathbf{v}}(-\mathbf{N},\varepsilon)$ to obtain (\ref{eq:formula_C0}). By taking $\mathbf{s}=(-N_1,\ldots,-N_{r+m})$ in (\ref{eq:expression_T_Ajepsilon_s}), we obtain two possible types of values for $T_{\mathcal{A}',\mathbf{u},\mathbf{v}}(-\mathbf{N},\varepsilon)$, depending on whether the tuples $\mathbf{v}$ and $\mathbf{u}$ satisfy certain conditions on their components.

\begin{proposition}
Let $\mathcal{A}' \subseteq [\![ 1,m ]\!] \setminus \{ j \}$.
\begin{itemize}
\item[i)] If $\mathcal{A}'= \{ 1,\ldots,\widehat{j},\ldots,m \}$ and $|\mathbf{u}_{q \bullet}|+|\mathbf{v}_{q \bullet}|=N_{r+q} \ (1 \leq q \neq j \leq m)$, then
\begin{align}
    T&_{\mathcal{A}',\mathbf{u},\mathbf{v}}(-\mathbf{N},\varepsilon) = \prod_{\substack{q=1 \\ q \neq j}}^m \left( (-1)^{N_{r+q}}N_{r+q}!\right) \label{eq:value_T_Ajepsilon_1st_case} \\
    & \prod_{p=1}^{\alpha} \left( \binom{k_p}{\mathbf{v}_{\bullet p}} \prod_{q=1}^{m} c_{q,a_p}^{v_{q,p}} \right) \prod_{p=1}^{\beta} \left( \binom{-N_{b_p}-1}{|\mathbf{u}_{\bullet p}|} \binom{|\mathbf{u}_{\bullet p}|}{\mathbf{u}_{\bullet p}} c_{j,b_p}^{-N_{b_p}-1-|\mathbf{u}_{\bullet p}|} \prod_{\substack{q=1 \\ q \neq j}}^{m} c_{q,b_p}^{u_{q,p}} \right). \nonumber
\end{align}

\item[ii)] Otherwise, we have $T_{\mathcal{A}',\mathbf{u},\mathbf{v}}(-\mathbf{N},\varepsilon) = 0$.
\end{itemize}
\end{proposition}

\begin{proof}
i) Taking the limit of (\ref{eq:expression_T_Ajepsilon_s}) when $\mathbf{s} \to (-N_1,\ldots,-N_{r+m})$, we get an expression of $T_{\mathcal{A}',\mathbf{u},\mathbf{v}}(-\mathbf{N},\varepsilon)$. Note that this expression contains an integral over a domain given by an empty product, so by convention, the corresponding integral is $1$. The rest of the terms are straightforward to compute using Lemma \ref{lem:inverse_gamma_formula}.

ii) The second case follows from Lemma \ref{lem:asymptotic_behavior_summand_h}.
\end{proof}

This last proposition shows that there is only finitely many terms contributing to the series on the right-hand side of (\ref{eq:expression_C_0_series}). By plugging (\ref{eq:value_T_Ajepsilon_1st_case}) into (\ref{eq:expression_C_0_series}), we get (\ref{eq:formula_C0}).

\subsection{Computation of \texorpdfstring{$C^1_{\protect \lowercase{j},\mathcal{A},\mathbf{\lowercase{k}}}(\mathbf{\lowercase{c}},{\protect \underset{\bm{\theta}}{\mathbf{N}}})$}{C1}}

From (\ref{eq:expression_C_1_series}), it is enough to study the derivative value $T'_{\mathcal{A}',\mathbf{u},\mathbf{v}}(\underset{\bm{\theta}}{-\mathbf{N}},\varepsilon)$ to obtain (\ref{eq:formula_C1}). The computations are done in a similar fashion as in §\ref{subsection:computation_C_0}.

\begin{proposition}
\label{prop:computations_general_term_f_derivative}
Let $\mathcal{A}' \subseteq [\![ 1,m ]\!] \setminus \{ j \}$.
\begin{itemize}
\item[i)] If $\mathcal{A}'=\{ 1,\ldots,\widehat{j},\ldots,m \}$ and $|\mathbf{u}_{q \bullet}|+|\mathbf{v}_{q \bullet}|=N_{r+q} \ (1 \leq q \neq j \leq m)$, then there exists $T^*_{\mathcal{A}',\mathbf{u},\mathbf{v}}(\varepsilon)=\lambda \ln \varepsilon$ with $\lambda \in \mathbb{C}$ such that
\begin{align}
    T'_{\mathcal{A}',\mathbf{u},\mathbf{v}}&(\underset{\bm{\theta}}{-\mathbf{N}},\varepsilon) = T^*_{\mathcal{A}',\mathbf{u},\mathbf{v}}(\varepsilon) + \prod_{\substack{q=1 \\ q \neq j}}^{m} \left( (-1)^{N_{r+q}} N_{r+q}! \right) \prod_{p=1}^{\alpha} \left( \binom{k_p}{\mathbf{v}_{\bullet p}} \prod_{q=1}^m c_{q,a_p}^{v_{q,p}} \right) \label{eq:expression_general_term_C_1_i} \\
    & \qquad \times \prod_{p=1}^{\beta} \left( \binom{-N_{a_p}-1}{|\mathbf{u}_{\bullet p}|} \binom{|\mathbf{u}_{\bullet p}|}{\mathbf{u}_{\bullet p}} c_{j,a_p}^{-N_{a_p}-1-|\mathbf{u}_{\bullet p}|} \prod_{\substack{q=1 \\ q \neq j}}^{m} c_{q,a_p}^{u_{q,p}} \right) \nonumber \\
    & \qquad \times \left( \sum_{p=1}^{\beta} \theta_{b_p} (h_{N_{b_p}}-h_{N_{b_p}+|\mathbf{u}_{\bullet p}|} + \ln c_{j,b_p} ) + \sum_{\substack{q=1 \\ q \neq j}}^{m} \theta_{r+q} (\gamma-h_{N_{r+q}}) \right). \nonumber
\end{align}

\item[ii)] If $\mathcal{A}'=\{ 1,\ldots,\widehat{j},\ldots,m \}$, and if there exists an integer $\ell \in \mathcal{A}'$ such that $|\mathbf{u}_{\ell \bullet}|+|\mathbf{v}_{\ell \bullet}| \neq N_{r+\ell}$ and $|\mathbf{u}_{q \bullet}|+|\mathbf{v}_{q \bullet}|=N_{r+q} \ (1 \leq q \neq j,\ell \leq m)$, then $T'_{\mathcal{A}',\mathbf{u},\mathbf{v}}(\underset{\bm{\theta}}{-\mathbf{N}},\varepsilon) \in \mathbb{C}\varepsilon^{|\mathbf{u}_{\ell \bullet}|+|\mathbf{v}_{\ell \bullet}|-N_{r+\ell}}$ (as a polynomial in $\varepsilon$).

\item[iii)] If $\mathcal{A}'=[\![ 1,m ]\!] \setminus \{ j,\ell \}$ and $|\mathbf{u}_{q \bullet}|+|\mathbf{v}_{q \bullet}|=N_{r+q}\ (1 \leq q \neq j,\ell \leq m)$, then there exists a function with no constant term $T^*_{\mathcal{A}',\mathbf{u},\mathbf{v}}(\varepsilon)=\mu \ln \varepsilon +\sum_{n=i}^{+\infty} \lambda_n \varepsilon^n$ defined for all $0<\varepsilon \ll 1$ where $i \in \mathbb{Z}_{\leq 0}$, and such that
\begin{align}
     T'_{\mathcal{A}',\mathbf{u},\mathbf{v}}(\underset{\bm{\theta}}{-\mathbf{N}},&\varepsilon) = T^*_{\mathcal{A}',\mathbf{u},\mathbf{v}}(\varepsilon) \label{eq:expression_general_term_C_1_iii} \\
    & + \theta_{r+\ell} \prod_{\substack{q=1 \\ q \neq j}}^m \left( (-1)^{N_{r+q}} N_{r+q}!\right)\prod_{p=1}^{\alpha} \left( \binom{k_p}{\mathbf{v}_{\bullet p}} \prod_{q=1}^m c_{q,a_p}^{v_{q,p}} \right) \nonumber \\
    & \quad \times \prod_{p=1}^{\beta} \left( \binom{-N_{b_p}-1}{|\mathbf{u}_{\bullet p}|} \binom{|\mathbf{u}_{\bullet p}|}{\mathbf{u}_{\bullet p}} \prod_{\substack{q=1 \\ q \neq j,\ell}}^m c_{q,b_p}^{u_{q,p}} \right) W_{\mathcal{B},j,\ell,\mathbf{n}(\mathbf{N},\mathbf{u}),N_{\ell+r}-|\mathbf{v}_{\ell \bullet}|}(\mathbf{c}) \nonumber
\end{align}
where $W$ corresponds to the constant defined in (\ref{eq:def_coefficient_W}), and where $\mathbf{n}(\mathbf{N},\mathbf{u})$ is the tuple defined in Definition \ref{def:C0_C1}.

\item[iv)] Otherwise, we have $T'_{\mathcal{A}',\mathbf{u},\mathbf{v}}(\underset{\bm{\theta}}{-\mathbf{N}},\varepsilon)=0$.
\end{itemize}
\end{proposition}

\begin{proof}
By (\ref{eq:expression_T_Ajepsilon_s}), we obtain a formula for the derivative $\partial_z T_{\mathcal{A}',\mathbf{u},\mathbf{v}}(-\mathbf{N}+z\bm{\theta},\varepsilon)$. Taking the limit of the resulting formula at $z=0$, and using Lemmata \ref{lem:inverse_gamma_formula}, \ref{lem:derivative_value_binomial}, and \ref{lem:integral_computations_epsilon_1}, we prove the four claims of the proposition.
\end{proof}

\begin{remark}
To simplify notation in Proposition \ref{prop:computations_general_term_f_derivative} we made the following choices:
\begin{enumerate}
\item[1)] In i) and ii), the tuple $\mathbf{u}$ is such that $\mathbf{u}=(u_{q,p})_{\substack{1 \leq q \neq j \leq m \\ 1 \leq p \leq \beta}}$.
\item[2)] In iii), the tuple $\mathbf{u}$ is such that $\mathbf{u}=(u_{q,p})_{\substack{1 \leq q \neq j,\ell \leq m \\ 1 \leq p \leq \beta}}$.
\end{enumerate}
\end{remark}

We now compute $C^1_{j,\mathcal{A},\mathbf{k}}(\mathbf{c},\underset{\bm{\theta}}{\mathbf{N}})$. By (\ref{eq:expression_C_1_series}) and Proposition \ref{prop:computations_general_term_f_derivative}, we have
\begin{align}
    &C^1_{j,\mathcal{A},\mathbf{k}}(\mathbf{c},\underset{\bm{\theta}}{\mathbf{N}}) = C^0_{j,\mathcal{A},\mathbf{k}}(\mathbf{c},\mathbf{N})\left( \theta_{r+j} (h_{N_{r+j}}-\gamma) + \sum_{p=1}^{\beta} \theta_{b_{p}}(h_{N_{b_p}}-\gamma)\right)+G^*(\varepsilon) \label{eq:proof_C_1_formula} \\
     & + \sum_{\substack{ v_{1,1}+\cdots+v_{m,1}=k_1 \\ \cdots \\ v_{1,\alpha}+\cdots+v_{m,\alpha}=k_{\alpha}}} (-1)^{N_{r+1}+\cdots+\widehat{N_{r+j}}+\cdots+N_{r+m}} N_{r+j}! \nonumber \\
     & \qquad \times \left( \sum^{\widehat{j}}_{\substack{u_{1,1}+\cdots+u_{1,\beta}=N_{r+1}-|\mathbf{v}_{1 \bullet}| \\ \cdots \\ u_{m,1}+\cdots+u_{m,\beta}=N_{r+m}-|\mathbf{v}_{m \bullet}|}} + \sum^{\widehat{j,\ell}}_{\substack{u_{1,1}+\cdots+u_{1,\beta}=N_{r+1}-|\mathbf{v}_{1 \bullet}| \\ \cdots \\ u_{m,1}+\cdots+u_{m,\beta}=N_{r+m}-|\mathbf{v}_{m \bullet}|}} \right) \nonumber \\
     & \qquad \qquad \qquad \qquad \qquad \qquad \qquad \qquad \qquad \qquad \qquad \left(T'_{\mathcal{A}',\mathbf{u},\mathbf{v}}(\underset{\bm{\theta}}{-\mathbf{N}},\varepsilon)-T^*_{\mathcal{A}',\mathbf{u},\mathbf{v}}(\underset{\bm{\theta}}{-\mathbf{N}},\varepsilon)\right) \nonumber
\end{align}
where $G^*(\varepsilon)=\mu \ln \varepsilon + \sum_{n \geq i} \lambda_n \varepsilon^n$ is a function with no constant term, defined for $0<\varepsilon \ll 1$, with $i \in \mathbb{Z}_{\leq 0}$. Since $C^1_{j,\mathcal{A},\mathbf{k}}(\mathbf{c},\underset{\bm{\theta}}{\mathbf{N}})$ is independent of $\varepsilon$, it follows from Lemma \ref{lem:linear_independence_laurent_series} that $G^*(\varepsilon)=0$. Therefore the constant term (with respect to $\varepsilon$) of the right-hand side of (\ref{eq:proof_C_1_formula}) is equal to the coefficient $C^1_{j,\mathcal{A},\mathbf{k}}(\mathbf{c},\underset{\bm{\theta}}{\mathbf{N}})$. By replacing $T'_{\mathcal{A}',\mathbf{u},\mathbf{v}}(\underset{\bm{\theta}}{-\mathbf{N}},\varepsilon)-T^*_{\mathcal{A}',\mathbf{u},\mathbf{v}}(\underset{\bm{\theta}}{-\mathbf{N}},\varepsilon)$ with their corresponding values obtained in Proposition \ref{prop:computations_general_term_f_derivative}, we get (\ref{eq:formula_C1}).

\section{Some values and derivative values of Hurwitz and generalized Barnes zeta functions}
\label{section:values_hurwitz_barnes}

In this section, we simplify several of the terms appearing in (\ref{eq:zeta_prime_N}) in specific cases. We start by recalling some well-known formulas for the Hurwitz zeta function in §\ref{subsection:values_particular_hurwitz}. In §\ref{subsection:generalized_barnes_formula}, we prove a formula for the generalized Barnes zeta function when the coefficients are rational numbers.

\subsection{Particular Hurwitz zeta values and derivative values}
\label{subsection:values_particular_hurwitz}

Let us first recall that the values at nonpositive integers of the Hurwitz zeta function are given by Bernoulli polynomials,
$$ \zeta(-N,x)=-\frac{B_{N+1}(x)}{N+1} \qquad (\Re(x)>0, \ N \in \mathbb{Z}_{\geq 0}). $$
As shown in \cite[Formula (12)]{miller1998derivhurwitz}, the Hurwitz zeta function satisfies the following multiplication formula
\begin{equation}
    \sum_{n=0}^{k-1}\zeta\left(s,x+\frac{n}{k}\right) = k^s \zeta(s,kx) \qquad (\Re(x)>0, \ s \neq 1, \ k \in \mathbb{N}). \label{eq:multiplication_formula}
\end{equation}

The derivative values do not have a simple formula in general. For particular cases,
$$ \zeta'(-1,1) = \frac{1}{12}-\ln \glaisher, \qquad \zeta'(0,x) = \ln \Gamma(x)-\frac{\ln(2\pi)}{2} \qquad (\Re(x)>0) $$
where $\glaisher$ is the Glaischer-Kinkelin constant. Also, by writing $\zeta(s,x)=\zeta(s,x+1)+\frac{1}{x^s}$, we obtain
\begin{equation}
    \zeta'(-N,x+1)=\zeta'(-N,x)+x^N \ln x \qquad (\Re(x)>0, \ N \in \mathbb{Z}_{\geq 0}). \label{eq:relation_prime_zeta_hurwitz_x_1}
\end{equation}

\subsection{Explicit formula for generalized Barnes zeta functions with rational coefficients}
\label{subsection:generalized_barnes_formula}

In this subsection, we fix $\mathbf{c}=(c_1,\ldots,c_{r}) \in \mathbb{Q}_{>0}^r$, $\mathbf{x}=(x_1,\ldots,x_r) \in H_0^r$, and $\mathbf{M}=(M_1,\ldots,M_{r}) \in \mathbb{Z}_{\geq 0}^r$. We establish an explicit relation between the generalized Barnes zeta function $\zeta_B(s,\mathbf{M},\mathbf{x}|\mathbf{c})$ and the Hurwitz zeta function $\zeta(s,x)$. To that end, we adapt a strategy used by Aoki and Sakane (see \cite{sakane2022values}), which originally led to formulas for higher derivative values of classical Barnes zeta functions with rational coefficients.

\begin{notation}
Let $c_i=\frac{p_i}{q_i}$ with $p_i>0$ and $q_i>0$ being coprime integers ($1 \leq i \leq r$). We set $\tilde{c}:=\frac{\ppcm(p_1,\ldots,p_r)}{\pgcd(q_1,\ldots,q_r)}$, and $\tilde{c}_i := \frac{\tilde{c}}{c_i} \in \mathbb{N}$ ($1 \leq i \leq r$).
\end{notation}

\begin{proposition}
We have
\begin{equation}
    \zeta_B(s,\mathbf{M},\mathbf{x}|\mathbf{c}) = \frac{\prod_{i=1}^r \tilde{c}_i^{M_i}}{\tilde{c}^{s}} \sum_{\substack{0 \leq u_1 \leq \tilde{c}_1-1 \\ \cdots \\ 0 \leq u_r \leq \tilde{c}_r-1}} \zeta_B\left(s,\mathbf{M},\left(\frac{x_i+u_i}{\tilde{c}_i}\right)_{i \in [\![ 1,r ]\!]} | (1,\ldots,1) \right). \label{eq:formula_zeta_barnes_s}
\end{equation}
\end{proposition}

\begin{proof}
By the analytic continuation principle, it is enough to check that (\ref{eq:formula_zeta_barnes_s}) holds for all $s \in \mathbb{C}$ such that $\Re(s) \gg 1$. Let $q = \ppcm(q_1, \ldots, q_r)$ and $p = \ppcm(q c_1,\ldots,q c_r)$. By \cite[p.3]{sakane2022values}, we know that $\frac{p}{q}=\tilde{c}$. Using the decomposition $\mathbb{Z}_{\geq 0} = \bigsqcup_{u_i=0}^{\tilde{c}_i-1} \left( \tilde{c}_i \mathbb{Z}_{\geq 0}+u_i \right)$ for each $1 \leq i \leq r$, we get
\begin{align*}
    \zeta_B&(s,\mathbf{M},\mathbf{x}|\mathbf{c}) \\
    & =\tilde{c}^{-s} \sum_{\substack{0 \leq u_1 \leq \tilde{c}_1-1 \\ \cdots \\ 0 \leq u_r \leq \tilde{c}_r-1}} \sum_{\substack{n_1 = u_1 \mod \tilde{c}_1 \\ \cdots \\ n_r = u_r \mod \tilde{c}_r}}  \frac{\prod_{i=1}^r (n_i+x_i)^{M_p}}{\left( \frac{q}{p} \sum_{i=1}^r (c_i x_i + c_i u_i + c_i(n_i-u_i)) \right)^s}.
\end{align*}
By a change of variables $n_i \to \tilde{c}_i n_i+u_i$ ($1 \leq i \leq r$), we get
$$ \zeta_B(s,\mathbf{M},\mathbf{x}|\mathbf{c})= \tilde{c}^{-s} \sum_{\substack{0 \leq u_1 \leq \tilde{c}_1-1 \\ \cdots \\ 0 \leq u_r \leq \tilde{c}_r-1}} \sum_{\substack{n_1 \geq 0 \\ \cdots \\ n_r \geq 0}} \frac{\prod_{i=1}^r (\tilde{c}_i n_i+u_i+x_i)^{M_i}}{\left( \frac{q}{p} \sum_{i=1}^r (c_i x_i + c_i u_i + c_i \tilde{c}_i n_i) \right)^s}. $$
By factoring $\tilde{c}_i^{M_i}$ in the numerator, and by noticing that $\frac{q c_i \tilde{c}_i}{p}=1$ ($1 \leq i \leq r$), we get
$$ \zeta_B(s,\mathbf{M},\mathbf{x}|\mathbf{c})= \frac{\prod_{i=1}^{r} \tilde{c}_i^{M_p}}{\tilde{c}^{s}} \sum_{\substack{0 \leq u_1 \leq \tilde{c}_1-1 \\ \cdots \\ 0 \leq u_r \leq \tilde{c}_r-1}} \sum_{\substack{n_1 \geq 0 \\ \cdots \\ n_r \geq 0}} \frac{\prod_{i=1}^r \left(n_i +  \frac{u_i +x_i}{\tilde{c}_i} \right)^{M_i}}{\left( \sum_{i=1}^r \left( n_i + \frac{q}{p} (c_i x_i + c_i u_i) \right) \right) ^s}. $$
Since $\frac{u_i+x_i}{\tilde{c}_i} = \frac{q}{p} (c_i x_i + c_i u_i) \ (1 \leq i \leq r$), we find that (\ref{eq:formula_zeta_barnes_s}) holds when $\Re(s) \gg 1$.
\end{proof}

We now establish a formula for $\zeta_B(s,\mathbf{M},\mathbf{x}|(1,\ldots,1))$, by slightly modifying Onodera's original proof of a formula for certain Barnes zeta functions with numerator terms (see \cite[Proposition 4.1]{onodera2021multiple}).

\begin{proposition}
\label{prop:formula_generalized_barnes_x1}
Let $s \in \mathbb{C} \setminus \mathbb{N}$. We have
\begin{align}
    \zeta_B(s,\mathbf{M},\mathbf{x}|&\mathbf{1}) = \sum_{\emptyset \neq \mathcal{A} \subsetneq [\![1,r]\!]} (-1)^{\beta+|\mathbf{M}|_{|\mathcal{B}}-1} \prod_{i=1}^{\beta} M_{b_i}!  \label{eq:formula_zeta_barnes_1} \\
    & \quad \times \sum_{k'+k_1+\cdots+k_{\alpha} = |\mathbf{M}|_{|\mathcal{B}}+\beta-1} (-1)^{k'} \frac{\zeta(s-k',|\mathbf{x}|)}{k'!} \prod_{i=1}^{\alpha} \frac{\zeta(-M_{a_i}-k_i,x_{a_i})}{k_i!}. \nonumber
\end{align}
\end{proposition}

Before proving this proposition, we first state a lemma provided by Onodera.

\begin{lemma}
\cite[Lemma 4.2]{onodera2021multiple}
\label{lem:generalized_faulhaber}
Let $\ell \in \mathbb{Z}$. We have
\begin{align*}
    & \sum_{\substack{n_1,\ldots,n_r \geq 0 \\ n_1+\cdots+n_r=\ell}} \prod_{i=1}^r (n_i+x_i)^{M_i} + (-1)^{r-1} \sum_{\substack{n_1,\ldots,n_r \leq -1 \\ n_1+\cdots+n_{r}=\ell}} \prod_{i=1}^r (n_i+x_i)^{M_i} \\
    & = \quad \sum_{\emptyset \neq \mathcal{A} \subsetneq [\![ 1,r]\!]} \prod_{i=1}^{\beta} M_{b_i}! \sum_{k'+k_1+\cdots+k_{\alpha}=|\mathbf{M}|_{|\mathcal{B}}+\beta-1} \frac{(|\mathbf{x}|+\ell)^{k'}}{k'!} \prod_{i=1}^{\alpha} \frac{(-1)^{k_i}\zeta(-M_{a_i}-k_i,x_{a_i})}{k_i!}.
\end{align*}
\end{lemma}

\begin{proof}[Proof of Proposition \ref{prop:formula_generalized_barnes_x1}]
On the convergence domain of $\zeta_B(s,\mathbf{M},\mathbf{x}|\mathbf{1})$ we have 
$$ \zeta_B(s,\mathbf{M},\mathbf{x}|\mathbf{1}) = \sum_{n'=0}^{+\infty} \frac{1}{(n'+|\mathbf{x}|)^s} \sum_{\substack{n_1,\ldots,n_r \geq 0 \\ n_1+\cdots+n_r=n'}} \prod_{i=1}^r (n_i+x_i)^{M_i}.$$
By Lemma \ref{lem:generalized_faulhaber}, we get
\begin{align*}
    \zeta_B(s,\mathbf{M},\mathbf{x}|\mathbf{1}) = \sum_{\emptyset \neq \mathcal{A} \subsetneq [\![ 1,r]\!]} \prod_{i=1}^{\beta} M_{b_i}! & \sum_{\substack{k_1+\cdots+k_{\alpha}+k'=|\mathbf{M}|_{|\mathcal{B}}+\beta-1}} \prod_{i=1}^{\alpha} \frac{(-1)^{k_i}\zeta(-M_{a_i}-k_i,x_i)}{k_i!} \\
    & \qquad \qquad \times \sum_{n'=0}^{+\infty} \frac{(n'+|\mathbf{x}|)^{k'}}{k'!(n'+|\mathbf{x}|)^s}.
\end{align*}
Since $\zeta(s-k',|\mathbf{x}|)=\sum_{n'=0}^{+\infty} \frac{(n'+|\mathbf{x}|)^{k'}}{k'!(n'+|\mathbf{x}|)^s}$, we obtain (\ref{eq:formula_zeta_barnes_1}).
\end{proof}

By plugging (\ref{eq:formula_zeta_barnes_1}) into (\ref{eq:formula_zeta_barnes_s}), we finally get
\begin{align*}
    \zeta_B&(s,\mathbf{M},\mathbf{x}|\mathbf{c}) = \frac{\prod_{i=1}^r \tilde{c}_i^{M_i}}{\tilde{c}^{s}} \sum_{\emptyset \neq \mathcal{A} \subsetneq [\![1,r]\!]} (-1)^{\beta+|\mathbf{M}|_{|\mathcal{B}}-1} \prod_{i=1}^{\beta} M_{b_i}!  \\
    & \times \sum_{\substack{0 \leq u_1 \leq \tilde{c}_1-1 \\ \cdots \\ 0 \leq u_r \leq \tilde{c}_r-1}} \sum_{\substack{k'+k_1+\cdots+k_{\alpha} \\ =|\mathbf{M}|_{|\mathcal{B}}+\beta-1}} \frac{(-1)^{k'}\zeta\left(s-k',\frac{\langle\mathbf{c},\mathbf{x}+\mathbf{u}\rangle}{\tilde{c}} \right)}{k'!} \prod_{i=1}^{\alpha} \frac{\zeta \left( -M_{a_i}-k_i,\frac{x_{a_i}+u_{a_i}}{\tilde{c}_{a_i}} \right)}{k_i!},
\end{align*}
where we set $\langle \mathbf{c},\mathbf{x}+\mathbf{u} \rangle:=c_1(x_1+u_1)+\cdots+c_r(x_r+u_r)$. By evaluating the derivative with respect to $s$ at nonpositive integers, we get:

\begin{corollary}
\label{cor:barnes_derivative_formula}
Let $N \in \mathbb{Z}_{\geq 0}$,
\begin{align}
    \zeta_B'&(-N,\mathbf{M},\mathbf{x}|\mathbf{c}) = \tilde{c}^{N} \prod_{i=1}^r \tilde{c}_i^{M_i} \sum_{\substack{\emptyset \neq \mathcal{A} \subsetneq [\![1,r]\!] \\ 0 \leq u_1 \leq \tilde{c}_1-1 \\ \cdots \\ 0 \leq u_r \leq \tilde{c}_r-1}} (-1)^{\beta+|\mathbf{M}|_{|\mathcal{B}}-1} \prod_{i=1}^{\beta} M_{b_i}! \label{eq:zeta_barnes_prime} \\
    & \times \sum_{k'+k_1+\cdots+k_{\alpha}=|\mathbf{M}|_{|\mathcal{B}}+\beta-1} \frac{(-1)^{k'}}{k'!} \prod_{i=1}^{\alpha} \frac{\zeta \left( -M_{a_i}-k_i,\frac{x_{a_i}+u_{a_i}}{\tilde{c}_{a_i}} \right)}{k_i!} \nonumber \\
    & \qquad \times \left[ \zeta'\left(-N-k',\frac{\langle\mathbf{c},\mathbf{x}+\mathbf{u}\rangle}{\tilde{c}}\right)-\zeta\left(-N-k',\frac{\langle\mathbf{c},\mathbf{x}+\mathbf{u}\rangle}{\tilde{c}}\right) \ln \tilde{c} \right]. \nonumber
\end{align}
\end{corollary}

\begin{example}
\label{ex:derivative_values_barnes}
Let us denote by $\glaisher$ the Glaisher-Kinkelin constant. We have
\begin{align*}
    \zeta_B'(0,\mathbf{1}|(1,1)) =& \frac{\ln(2\pi)}{2}-\ln \glaisher +\frac{1}{12}, \qquad \zeta_B'(0,\mathbf{1}|(1,2)) = \frac{\ln 2}{4} + \frac{\ln \pi}{2} - \frac{\ln \glaisher}{2} + \frac{1}{24}, \\
    \zeta_B'(0,\mathbf{1}|(1,3)) =& -\frac{5\ln 3}{9} + \ln(2\pi) - \frac{\ln \glaisher}{3} - \frac{2\ln\Gamma\left(\frac{2}{3}\right)}{3} - \frac{\ln \Gamma\left(\frac{1}{3}\right)}{3} + \frac{1}{36}, \\
    \zeta_B'(0,\mathbf{1}|(2,3)) =& -\frac{11\ln 3}{18} + \frac{13\ln 2}{12} + \frac{4\ln \pi}{3} - \frac{\ln \glaisher}{6} - \frac{2\ln \Gamma\left( \frac{2}{3} \right)}{3} - \ln \Gamma\left( \frac{1}{3} \right) + \frac{1}{72}.
\end{align*}
\end{example}

\subsection{Application: A formula for \texorpdfstring{$\zeta_{\mathfrak{so}(5)}'(0)$}{zetaso5} and \texorpdfstring{$\zeta_{\mathfrak{g}_2}'(0)$}{zetag2}}
\label{subsection:values_zeta_witten_0}

We now want to prove (\ref{eq:zeta_so5_derivative_0}) and (\ref{eq:zeta_g_2_derivative_0}). We begin with the following lemma, which simplifies (\ref{eq:zeta_prime_N}) in the case where $\mathbf{N}=\mathbf{0}$.
\begin{lemma}
\label{lem:value_zeta_s_0}
We have
\begin{align*}
    & \zeta'(\mathbf{c},\mathbf{x},\underset{\bm{\theta}}{\mathbf{0}}) = \sum_{j=1}^{m} \theta_{r+j} \zeta_B'(0,\mathbf{0},\mathbf{x}|\mathbf{c}_{j\bullet}) \nonumber \\
    & + \sum_{\substack{\emptyset \neq \mathcal{A} \subseteq [\![1,r]\!] \\ 1 \leq j \leq m}} \frac{(-1)^{\beta}\theta_{r+j}}{|\bm{\theta}|_{|\mathcal{B}\cup[\![r+1,r+m]\!]}}\sum_{k_1+\cdots+k_{\alpha}=\beta} \frac{\prod_{p=1}^\alpha c_{j,a_p}^{k_p}}{\prod_{p=1}^\beta c_{j,b_p}} \sum_{i=1}^{\alpha} \theta_{a_i} \frac{\zeta'(-k_i,x_{a_i})}{k_i!} \prod_{\substack{p=1 \\ p \neq i}}^{\alpha} \frac{\zeta(-k_p,x_{a_p})}{k_p!} \\
    & + \sum_{\substack{\emptyset \neq \mathcal{A} \subseteq [\![1,r]\!] \\ 1 \leq j \leq m}} \frac{(-1)^{\beta}\theta_{r+j}}{|\bm{\theta}|_{|\mathcal{B}\cup[\![r+1,r+m]\!]}}\sum_{k_1+\cdots+k_{\alpha}=\beta} \prod_{p=1}^{\alpha} \frac{\zeta(-k_p,x_{a_p})}{k_p!} \Bigg[ \frac{\prod_{p=1}^\alpha c_{j,a_p}^{k_p}}{\prod_{p=1}^\beta c_{j,b_p}} \sum_{p=1}^{\beta} \theta_{b_p} \ln c_{j,{b_p}} \nonumber \\
    & \qquad \qquad \qquad+ \sum_{\substack{\ell=1 \\ \ell \neq j}}^{m} \theta_{r+\ell} \sum_{\substack{v_{j,1}+v_{\ell,1}=k_1 \\ \cdots \\ v_{j,\alpha}+v_{\ell,\alpha} = k_{\alpha}}} W_{\mathcal{B},j,\ell,\mathbf{0},-|\mathbf{v}_{\ell \bullet}|}(\mathbf{c}) \prod_{p=1}^{\alpha} \left( \binom{k_p}{v_{j,p}} c_{j,a_p}^{v_{j,p}}c_{\ell,a_p}^{v_{\ell,p}} \right) \Bigg].
\end{align*}
\end{lemma}

\begin{proof}
It directly follows from (\ref{eq:zeta_prime_N}), and from the two following equalities
\begin{align*}
    C^0_{j,\mathcal{A},\mathbf{k}}(\mathbf{c},\mathbf{0})=& \frac{\prod_{p=1}^\alpha c_{j,a_p}^{k_p}}{\prod_{p=1}^\beta c_{j,b_p}}, \\
    C_{j,\mathcal{A},\mathbf{k}}^1(\mathbf{c},\underset{\bm{\theta}}{\mathbf{0}}) =& \frac{\prod_{p=1}^\alpha c_{j,a_p}^{k_p}}{\prod_{p=1}^\beta c_{j,b_p}} |\bm{\theta}|_{|\mathcal{B}\cup[\![r+1,r+m]\!]} \gamma + \frac{\prod_{p=1}^\alpha c_{j,a_p}^{k_p}}{\prod_{p=1}^\beta c_{j,b_p}} \sum_{p=1}^{\beta} \theta_{b_p} \ln c_{j,{b_p}} \\
    & + \sum_{\substack{\ell=1 \\ \ell \neq j}}^{m} \theta_{r+\ell} \sum_{\substack{v_{j,1}+v_{\ell,1}=k_1 \\ \cdots \\ v_{j,\alpha}+v_{\ell,\alpha} = k_{\alpha}}} W_{\mathcal{B},j,\ell,\mathbf{0},-|\mathbf{v}_{\ell \bullet}|}(\mathbf{c}) \prod_{p=1}^{\alpha} \left( \binom{k_p}{v_{j,p}} c_{j,a_p}^{v_{j,p}}c_{\ell,a_p}^{v_{\ell,p}} \right).
\end{align*}
\end{proof}

Let 
$\mathbf{c}(\mathfrak{so}(5)):=\left(
\begin{smallmatrix}
    1 & 1\\
    1 & 2
\end{smallmatrix} 
\right)$ and 
$\mathbf{c}(\mathfrak{g}_2) := \left(
\begin{smallmatrix}
    1 & 1\\
    1 & 2\\
    1 & 3\\
    2 & 3
\end{smallmatrix}\right)$. 
By Example \ref{ex:multiple_zeta_rank_2}, we have $\zeta_{\mathfrak{so}(5)}(s)=6^s\zeta(\mathbf{c}(\mathfrak{so}(5)),\mathbf{1},(s,s,s,s))$ and $\zeta_{\mathfrak{g}_2}(s)=120^s\zeta(\mathbf{c}(\mathfrak{g}_2),\mathbf{1},(s,s,s,s,s,s))$. By Lemma \ref{lem:value_zeta_s_0}, Examples \ref{ex:derivative_values_barnes} and \ref{ex:W_constants}, we find (\ref{eq:zeta_so5_derivative_0}). Using the same type of arguments, and by using (\ref{eq:relation_prime_zeta_hurwitz_x_1}), the multiplication formula (\ref{eq:multiplication_formula}), and Euler's reflection formula, we get (\ref{eq:zeta_g_2_derivative_0}).

\section{Application: An asymptotic formula}
\label{section:asymptotic_formula}

Bridges, Brindle, Bringmann, and Franke established a more general variant of Meinardus's original theorem (see \cite{andrews1976partitions}) in \cite{bridges2024asymptotic}. We apply their results to obtain an asymptotic formula for the number of $n$-dimensional representations of the Lie algebra $\mathfrak{g}_2$.

\subsection{Statement of the Meinardus-type Theorem}
\label{subsection:meinardus_type_theorem}

Let $f: \mathbb{N} \to \mathbb{Z}_{\geq 0}$. We set for all $q=\phie^{-z} \ (z \in H_0)$, the functions
$$ G_f(z) = \sum_{n \geq 0} p_f(n) q^n := \prod_{n \geq 1} \frac{1}{(1-q^n)^{f(n)}}, \qquad L_f(s):=\sum_{n \geq 1} \frac{f(n)}{n^s}. $$
Let $\Lambda := \mathbb{Z}_{\geq 0} \setminus f^{-1}(\{ 0 \})$. We assume that:
\begin{enumerate}
    \item[(P1)] Let $\alpha>0$ be the largest pole of $L_f$. There exists an integer $L \in \mathbb{Z}_{\geq 0}$ such that, for every prime number $p$, $\left|\Lambda \setminus (p\mathbb{Z}_{\geq 0} \cap \Lambda) \right| \geq L > \frac{\alpha}{2}$.
    \item[(P2)] There exists a real number $R \in \mathbb{R}_{\geq 0}$ such that $L_f$ is meromorphic on $\overline{H}_{-R} = \{ z \in \mathbb{C}; \Re(z) \geq -R \}$, and is holomorphic on the line $(\Re(z) = -R)$. We also assume that the meromorphic function $L_f^*(s) := \Gamma(s) \zeta(s+1) L_f(s)$ has only real poles $\alpha := \gamma_1 > \ldots$, and that these poles are simple except at $s=0$, where the pole may be a double pole.
    \item[(P3)] There exists a real number $a < \frac{\pi}{2}$ such that, on each vertical strip $\sigma_1 \leq \sigma \leq \sigma_2$ contained in the domain of holomorphy of $L_f$, we have
    $$ L_f(s)\underset{|\tau| \to +\infty}{=} O_{\sigma_1,\sigma_2} \left( \phie^{a|\tau|} \right) \qquad (s=\sigma+\phii \tau).$$
\end{enumerate}

\begin{theorem}
\cite[Theorem 4.4]{bridges2024asymptotic}
\label{theorem:meinardus_type_theorem}
We assume conditions (P1), (P2), and (P3). Let $L$ be the real number from (P1) and $R$ be the real number from (P2). Moreover, we assume that $L_f$ has only two poles $\alpha > \beta > 0$ in $H_0$, and that there exists $\ell \in \mathbb{N}$ satisfying the inequality $\frac{\ell+1}{\ell}\beta < \alpha < \frac{\ell}{\ell-1}\beta$. Then we have
\begin{align*}
    p_f(n) \underset{n \to +\infty}{=}& \frac{C}{n^b} \exp \left( A_1 n^{\frac{\alpha}{\alpha+1}}+ A_2 n^{\frac{\beta}{\alpha+1}}+ \sum_{j=3}^{\ell+1} A_j n^{\frac{(j-1)\beta}{\alpha+1}+\frac{j-2}{\alpha+1}+2-j} \right) \\
    & \qquad \qquad \qquad \times \left( 1+ \sum_{j=2}^N \frac{\widetilde{B}_j}{n^{\nu_j}} + O_{L,R} \left( n^{-\min \{ \frac{2L-\alpha}{2(\alpha+1)},\frac{R}{\alpha+1} \}} \right)\right),
\end{align*}
with $(A_j)_{j \geq 3}$ explicit constants, $0<\nu_2<\cdots$ running over the positive elements of $\mathcal{N}+\mathcal{M}$ described in formulas \cite[(1.9),(1.10)]{bridges2024asymptotic}.
\end{theorem}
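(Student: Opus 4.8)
The plan is to prove this by the circle (saddle-point) method applied to the Cauchy integral
\[
p_f(n) = \frac{1}{2\pi i}\int_{c-i\pi}^{c+i\pi} G_f(e^{-z})\,e^{nz}\,dz ,
\]
where $q=e^{-z}$ and $c>0$ will be chosen close to the saddle. The first step is a Mellin analysis of the exponent. From $\log G_f(e^{-z})=\sum_{n\ge1}f(n)\sum_{m\ge1}\tfrac1m e^{-nmz}$ one gets, for $\re z$ in a sector $|\arg z|\le\theta_0<\pi/2$ and any $c'>\alpha$,
\[
\log G_f(e^{-z})=\frac{1}{2\pi i}\int_{(c')} L_f^*(s)\,z^{-s}\,ds ,\qquad L_f^*(s)=\Gamma(s)\zeta(s+1)L_f(s).
\]
Shifting the contour leftward to $\re s=-R$ — which is permitted because (P2) controls the poles (simple, real, with a possible double pole at $s=0$) and (P3) lets the horizontal segments be pushed to $\pm i\infty$ — collects the residues at the positive poles $\alpha>\beta$ (and the other $\gamma_i$, which here lie in $\overline H_{-R}\setminus H_0$) and at $s=0$, yielding
\[
\log G_f(e^{-z})=a_1 z^{-\alpha}+a_2 z^{-\beta}+\bigl(c_0\log(1/z)+c_1\bigr)+O(z^{R}),
\]
with $a_1,a_2,c_0,c_1$ explicit in the residue data of $L_f^*$ (the double pole at $0$ is the source of the $\log(1/z)$ term and hence of the polynomial prefactor).

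Next comes the saddle-point step. Set $\Phi(z):=\log G_f(e^{-z})+nz=a_1z^{-\alpha}+a_2z^{-\beta}+c_0\log(1/z)+c_1+nz+O(z^{R})$. The saddle $z_*(n)$ solves $\Phi'(z_*)=0$, i.e. $n=\alpha a_1 z_*^{-\alpha-1}+\beta a_2 z_*^{-\beta-1}+\cdots$; Lagrange inversion gives an expansion $z_*(n)=\lambda\, n^{-1/(\alpha+1)}\bigl(1+\sum_{k\ge1} d_k\, n^{-k(\alpha-\beta)/(\alpha+1)}+\cdots\bigr)$. Substituting into $\Phi$, the part $a_1z^{-\alpha}+nz$ produces the principal exponential $A_1 n^{\alpha/(\alpha+1)}$, and feeding the correction terms of $z_*$ back into $a_1z^{-\alpha}$ and $a_2z^{-\beta}$ generates a cascade whose $j$-th term has size $n^{((j-1)\beta-(j-2)\alpha)/(\alpha+1)}=n^{(j-1)\beta/(\alpha+1)+(j-2)/(\alpha+1)+2-j}$. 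The hypothesis $\frac{l+1}{l}\beta<\alpha<\frac{l}{l-1}\beta$ is precisely the assertion that this exponent is positive for $j\le l+1$ and nonpositive for $j\ge l+2$; the former stay in the exponential as $A_1,\dots,A_{l+1}$, the latter are $O(1)$ and are absorbed into the prefactor $C$.

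The third step is the Laplace expansion on the major arc $z=z_*+iy$, $|y|\le y_0:=z_*^{1+\delta}$. Here $\Phi(z)=\Phi(z_*)-\tfrac12\Phi''(z_*)y^2+\sum_{r\ge3}\tfrac{\Phi^{(r)}(z_*)}{r!}(iy)^r$ with $\Phi''(z_*)\asymp n^{(\alpha+2)/(\alpha+1)}$, so the Gaussian integral gives the factor $\asymp z_*^{(\alpha+2)/2}\asymp n^{-b}$, and Taylor-expanding $\exp\!\bigl(\sum_{r\ge3}\cdots\bigr)$ together with the sub-residue contributions to $\log G_f$ yields the full expansion $\frac{C}{n^{b}}\exp(A_1n^{\alpha/(\alpha+1)}+\cdots)\bigl(1+\sum_{j\ge2}B_j n^{-\nu_j}+O(n^{-R/(\alpha+1)})\bigr)$. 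The exponents $\nu_j$ are sums of the "building blocks" $\tfrac1{2(\alpha+1)}$, $\tfrac{\alpha-\beta}{\alpha+1}$ and $\tfrac{\gamma_i}{\alpha+1}$-type quantities, i.e. the positive elements of the set $\mathcal N+\mathcal M$ of \cite[(1.9),(1.10)]{bridges2023asymptotic}, and the $B_j$ are the explicit polynomials in the residue data produced by \cite[Lemma 3.7]{bridges2023asymptotic}; the $O(n^{-R/(\alpha+1)})$ is inherited from the $O(z^{R})$ tail of the Mellin shift.

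The main obstacle, and the one remaining step, is the minor-arc estimate: showing $\int_{y_0\le|y|\le\pi}\bigl|G_f(e^{-z_*-iy})\bigr|\,dy$ is smaller than the main term by a factor $O\!\bigl(n^{-(2L-\alpha)/(2(\alpha+1))}\bigr)$. One writes $\re\log G_f(e^{-z_*-iy})-\log G_f(e^{-z_*})=-\sum_{n,m}\tfrac{f(n)}{m}e^{-nmz_*}\bigl(1-\cos(nmy)\bigr)$ and must bound this below in absolute value uniformly for $y$ away from $0$. This is exactly where (P1) enters: after a Farey/Dirichlet dissection of $[-\pi,\pi]$, near any rational $a/p$ the condition that at least $L>\alpha/2$ elements of $\Lambda$ lie outside $p\mathbb N_0$ forces enough indices $n\in\Lambda$ to have $ny$ bounded away from $2\pi\mathbb Z$, so that $\sum_{n\in\Lambda}\tfrac{f(n)}{m}e^{-nmz_*}(1-\cos(nmy))\gg L\,z_*^{-\alpha}$ up to constants, whence $\re\Phi$ drops by $\gg L\, n^{\alpha/(\alpha+1)}$ and the minor-arc contribution to $p_f(n)$ loses the claimed power of $n$ relative to the main term (the precise exponent $(2L-\alpha)/(2(\alpha+1))$ coming from the $\tfrac12$ in (P1) together with the width $y_0$ of the major arc). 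Making this uniform requires Weyl-type bounds for $\sum_{n\in\Lambda,\,n\le x}\|ny\|$, which is the technical heart; everything else is residue bookkeeping and the Laplace coefficient computation.
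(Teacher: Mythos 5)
The first thing to say is that the paper does not prove this statement at all: it is quoted verbatim as \cite[Theorem 4.4]{bridges2023asymptotic} and used as a black box, so there is no internal proof to compare your attempt against. What you have written is an outline of the circle-method proof that the cited reference carries out.

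As an outline, your strategy is the standard and correct one, and your bookkeeping is consistent with the statement: the Mellin representation $\log G_f(e^{-z})=\frac{1}{2\pi i}\int_{(c')}\Gamma(s)\zeta(s+1)L_f(s)z^{-s}\,ds$ with a contour shift to $\re s=-R$ (justified by (P2) and (P3)) is exactly how the residues at $\alpha$, $\beta$, the double pole at $0$, and the remaining $\gamma_i$ enter; your identification $\frac{(j-1)\beta}{\alpha+1}+\frac{j-2}{\alpha+1}+2-j=\frac{(j-1)\beta-(j-2)\alpha}{\alpha+1}$ is correct, and the hypothesis $\frac{l+1}{l}\beta<\alpha<\frac{l}{l-1}\beta$ does say precisely that these exponents are positive for $3\le j\le l+1$ and negative for $j\ge l+2$, which is why the cascade of saddle corrections truncates where it does. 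However, the proposal has a genuine gap that you yourself flag: the minor-arc estimate. Saying that (P1) ``forces enough indices $n\in\Lambda$ to have $ny$ bounded away from $2\pi\mathbb{Z}$'' and that this yields a drop of size $\gg L\,z_*^{-\alpha}$ in $\re\log G_f$ is a plausible heuristic, but it is not a proof: one must produce a uniform lower bound for $\sum_{n\in\Lambda}f(n)e^{-nm z_*}(1-\cos(nmy))$ over the whole minor arc, with the correct dependence on $L$ so that the saving is exactly $n^{-(2L-\alpha)/(2(\alpha+1))}$, and this is the technical core of the theorem (it is where the quantitative form of (P1), $L>\alpha/2$, is actually consumed). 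Likewise the claim that the major-arc expansion produces error exponents running exactly over the positive elements of $\mathcal{N}+\mathcal{M}$ is asserted rather than derived. So the proposal is a correct high-level roadmap, but not a self-contained proof; given that the paper itself treats this as an external citation, the honest conclusion is that your sketch is compatible with the known proof but does not replace it.
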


\subsection{Proof of Theorem \ref{th:representation_g_2}}
\label{subsection:proof_number_representation_g2}

We set $P(i,j):=\frac{ij(i+j)(i+2j)(i+3j)(2i+3j)}{5!}$. By \cite[§24.3]{humphreys1972lie}, we have that the number of $n$-dimensional representations of the exceptional Lie algebra $\mathfrak{g}_2$ is
$$ r_{\mathfrak{g}_2}(n) = \left| \left \{ (k_{i,j})_{i,j \geq 1} \in {\mathbb{Z}_{\geq 0}}^{{\mathbb{N}}^2}; \sum_{i,j \geq 1} k_{i,j} P(i,j) =n \right \} \right|. $$

The sequence $f(n) = \left| \left\{ (i,j) \in {\mathbb{N}}^2; P(i,j)=n \right\} \right|$ determines the generating function of $r_{\mathfrak{g}_2}(n)$:
$$ \prod_{n \geq 1} \frac{1}{(1-q^n)^{f(n)}} = \prod_{i,j \geq 1} \sum_{k_{i,j} \geq 1} q^{k_{i,j} P(i,j)} = \sum_{n=1}^{+\infty} r_{\mathfrak{g}_2}(n) q^n. $$
Let us define $L_f(s) = \zeta_{\mathfrak{g}_2}(s)=120^s\zeta(\mathbf{c}(\mathfrak{g}_2),\mathbf{1},(s,\ldots,s))$, and $L_f^*(s) = \zeta_{\mathfrak{g}_2}(s) \Gamma(s) \zeta(s+1)$. From \cite[Theorem 3.1]{komori2011witten}, we have that $\zeta_{\mathfrak{g}_2}(s)$ has only two poles in $H_0$, at $\alpha:=\frac{1}{3}$ and $\beta:=\frac{1}{5}$, and the integer $\ell=2$ satisfies the hypothesis of Theorem \ref{theorem:meinardus_type_theorem}. Moreover, $L_f^*(s)$ has a pole of order $2$ at $s=0$. We set $\omega_{\alpha} := \res_{s=\frac{1}{3}}\zeta_{\mathfrak{g}_2}(s)$ and $\omega_{\beta} := \res_{s=\frac{1}{5}} \zeta_{\mathfrak{g}_2}(s)$. Let $\Lambda := \mathbb{Z}_{\geq 0} \setminus f^{-1}(\{ 0 \})$. We now check the conditions (P1), (P2), and (P3) of Theorem \ref{theorem:meinardus_type_theorem}:
\begin{itemize}
\item[(P1)] For any prime number $p$, we have $\left|\Lambda \setminus (p \mathbb{Z}_{\geq 0} \cap \Lambda)\right|=+\infty$. Indeed, each sequence defined in the following
\begin{align*}
    u(2) :=& (P(8k+1,1))_{k \in \mathbb{Z}_{\geq 0}}, && u(3) := (P(9k+1,1))_{k \in \mathbb{Z}_{\geq 0}}, \\
    u(5) :=& (P(25k+1,1))_{k \in \mathbb{Z}_{\geq 0}}, && u(p) := (P(kp+1,1))_{k \in \mathbb{Z}_{\geq 0}} \quad \text{ if } p \geq 7.
\end{align*}
is strictly increasing, and for every prime $p$, we have $u(p) \subseteq \Lambda \setminus (p \mathbb{Z}_{\geq 0} \cap \Lambda)$. Therefore, any real number $L \geq \frac{1}{6}$ satisfies condition (P1).
\item[(P2)] Thanks to \cite[Theorem 3.1]{komori2011witten}, we know that the poles of $\zeta_{\mathfrak{g}_2}$ belong to the set 
$$ \mathcal{S}_{\mathfrak{g}_2} := \left\{ \frac{1}{5} \right\} \cup \left\{ \frac{k}{3}; k \in \mathbb{Z}_{\leq 1}, k \neq 0 \mod 3 \right\}. $$ 
Also, Theorem \ref{th:analytic_continuation_zeta} implies that all these poles are simple poles.
\item[(P3)] The polynomial $P$ satisfies the $\mathrm{H_0S}$ condition of the article \cite{essouabri97singularitedirichlet}. Therefore, from \cite[Theorem 3]{essouabri97singularitedirichlet}, $\zeta_{\mathfrak{g}_2}$ admits a polynomial bound on each vertical strip.
\end{itemize}
In the conditions (P1) and (P2), $L,R$ are arbitrarily large. From the definition of the sets (1.8), (1.9), and (1.10) in \cite{bridges2024asymptotic}, we find that the sequence $\nu_j$ in \cite[Theorem 4.4]{bridges2024asymptotic} corresponds to $\nu_2=\frac{1}{20}, \nu_3=\frac{2}{20}, \nu_4=\frac{3}{20}, \ldots $ Applying Theorem \ref{theorem:meinardus_type_theorem} to the integer sequence $f(n)$, we get Theorem \ref{th:representation_g_2} with the coefficients:
\begin{align*}
    &C = 2^{\frac{7}{4}} 15^{\frac{5}{12}} \pi^2 \left( \omega_{\alpha}\Gamma\left( \frac{4}{3} \right)\zeta\left( \frac{4}{3} \right)\right)^{\frac{-3}{16}}, \quad b=\frac{9}{16}, \quad K_2 = \frac{3\omega_{\beta} \Gamma\left( \frac{6}{5} \right) \zeta\left( \frac{6}{5} \right)}{4\left( \omega_{\alpha} \Gamma\left( \frac{4}{3} \right) \zeta\left( \frac{4}{3} \right)\right)^{\frac{3}{20}}}, \\
    &A_1 = 4\left( \omega_{\alpha}\Gamma\left( \frac{4}{3} \right)\zeta\left( \frac{4}{3} \right)\right)^{\frac{3}{4}}, \quad A_2 = \frac{\omega_{\beta} \Gamma\left( \frac{1}{5} \right) \zeta\left( \frac{6}{5} \right)}{\left( \omega_\alpha \Gamma\left( \frac{1}{3} \right)\zeta\left( \frac{4}{3}\right) \right)^{\frac{3}{20}}}, \\
    &A_3 = \frac{2K_2^{2}}{3\left( \omega_{\alpha} \Gamma\left( \frac{4}{3} \right) \zeta\left( \frac{4}{3} \right)\right)^{\frac{3}{4}}} - \frac{\omega_{\beta} \Gamma\left( \frac{6}{5} \right) \zeta\left( \frac{6}{5} \right)}{\left( \omega_{\alpha} \Gamma\left( \frac{4}{3} \right) \zeta\left( \frac{4}{3} \right)\right)^{\frac{9}{10}}} K_2.
\end{align*}
These coefficients were recently explicitly computed in Au's work \cite[Example 10.4]{au2025valueswitten}.

\section{Annex}
\label{section:annex}

We introduce a few elementary but technical tools that are used to derive our main results.

\begin{proposition}
\label{prop:pfd}
Let $\mathbb{K}=\mathbb{Q}(\mathbf{c})$ be the number field generated by the coefficients $c_{q,p}$. Let $\mathcal{B}=\{b_1,\ldots,b_{\beta}\} \subsetneq [\![1,r]\!]$, $j,\ell \in [\![1,m]\!]$, $n' \in \mathbb{Z}$, $\mathbf{n}=(n_1,\ldots,n_{\beta}) \in \mathbb{Z}_{\geq 0}^{\beta}$. We have the following partial fraction decomposition, in the $y$ variable,
\begin{align}
    y^{n'-1}& \times \prod_{p=1}^{\beta} \left( c_{j,b_p}+ c_{\ell,b_p} y \right)^{-n_p-1} \nonumber \\
    & = \widetilde{Q}_{\mathcal{B},j,\ell,\mathbf{n},n'}(y) + \sum_{k=1}^{n'} \frac{U_{\mathcal{B},j,\ell,\mathbf{n},n',k}}{y^k} + \sum_{p=1}^{\beta} \sum_{k=1}^{|\mathbf{n}|+\beta} \frac{V_{\mathcal{B},j,\ell,\mathbf{n},n',p,k}}{(c_{j,b_p}+ c_{\ell,b_p} y)^{k}} \label{eq:partial_fraction_decomposition}
\end{align}
where $U_{\mathcal{B},j,\ell,\mathbf{n},n',k}, V_{\mathcal{B},j,\ell,\mathbf{n},n',p,k} \in \mathbb{K}$, and where $\widetilde{Q}_{\mathcal{B},j,\ell,\mathbf{n},n'}(y) \in \mathbb{K}[y]$ is a polynomial.
\end{proposition}

\begin{notation}
Let $Q_{\mathcal{B},j,\ell,\mathbf{n},n'}(y)$ denote the antiderivative of the polynomial $\widetilde{Q}_{\mathcal{B},j,\ell,\mathbf{n},n'}(y)$, normalized so that it vanishes at $y=1$. Let
\begin{align}
    &W_{\mathcal{B},j,\ell,\mathbf{n},n'}(\mathbf{c}):= -Q_{\mathcal{B},j,\ell,\mathbf{n},n'}(0) - \sum_{k=2}^{n'} \frac{U_{\mathcal{B},j,\ell,\mathbf{n},n',k}}{k-1} \label{eq:def_coefficient_W} \\
    & \quad - \frac{1}{c_{\ell,b_p}} \sum_{p=1}^{\beta} \sum_{k=2}^{|\mathbf{n}|+\beta} \frac{V_{\mathcal{B},j,\ell,\mathbf{n},n',p,k}}{k-1} \left( (c_{j,b_p}+ c_{\ell,b_p})^{1-k} - c_{j,b_p}^{1-k} \right) \nonumber \\
    & \quad + \frac{1}{c_{\ell,b_p}} \sum_{p=1}^{\beta} V_{\mathcal{B},j,\ell,\mathbf{n},n',p,1} \ln \left( 1+\frac{c_{\ell,b_p}}{c_{j,b_p}} \right). \nonumber
\end{align}
\end{notation}

\begin{example}
\label{ex:W_constants}
Let $j \neq \ell$, then
\begin{align*}
    & W_{\{p\},j,\ell,0,0} = -\frac{1}{c_{j,p}} \ln\left( 1+\frac{c_{\ell,b_p}}{c_{j,b_p}} \right), \qquad W_{\{p\},j,\ell,0,1}(\mathbf{c})=\frac{1}{c_{\ell,b_p}} \ln\left( 1+\frac{c_{\ell,b_p}}{c_{j,b_p}} \right), \\
    & W_{\emptyset,j,\ell,\emptyset,0}(\mathbf{c})=0.
\end{align*}
\end{example}

\begin{lemma}
\label{lem:integral_computations_epsilon_1}
There exists a function with no constant term $G^*_{\mathcal{B},j,\ell,\mathbf{n},n'}(\varepsilon)=\mu \ln \varepsilon + \sum_{n \geq i} \lambda_n \varepsilon^n$ defined for all $0 < \varepsilon \ll 1$ with $i \in \mathbb{Z}$, and such that
\begin{align*}
    & \int_{\varepsilon}^1 y^{-1-n'} \times \prod_{p=1}^{\beta} \left( c_{j,b_p}+c_{\ell,b_p} y \right)^{-1-n_p} \dd y = G^*_{\mathcal{B},j,\ell,\mathbf{n},n'}(\varepsilon) + W_{\mathcal{B},j,\ell,\mathbf{n},n'}(\mathbf{c}),
\end{align*}
where $W_{\mathcal{B},j,\ell,\mathbf{n},n'}(\mathbf{c})$ is the constant defined in (\ref{eq:def_coefficient_W}).
\end{lemma}

\begin{proof}
Plugging (\ref{eq:partial_fraction_decomposition}) into the integrand, we get
\begin{align*}
    & \int_{\varepsilon}^1 y^{-1-n'} \times \prod_{p=1}^{\beta} \left( c_{j,b_p}+c_{\ell,b_p} y \right)^{-1-n_p} \dd y = - U_{\mathcal{B},j,\ell,\mathbf{n},n',1} \ln \varepsilon \\
    & - Q_{\mathcal{B},j,\ell,\mathbf{n},n'}(\varepsilon)-\sum_{k=2}^{n'} \frac{U_{\mathcal{B},j,\ell,\mathbf{n},n',k}(1-\varepsilon^{1-k})}{k-1} \\
    & + \frac{1}{c_{\ell,b_p}}\sum_{p=1}^{\beta} V_{\mathcal{B},j,\ell,\mathbf{n},n',p,1} \left( \ln \left( 1+\frac{c_{\ell,b_p}}{c_{j,b_p}}\right) - \ln \left( 1+\frac{c_{\ell,b_p}}{c_{j,b_p}} \varepsilon \right) \right) \\
    & - \frac{1}{c_{\ell,b_p}} \sum_{p=1}^{\beta} \sum_{k=2}^{|\mathbf{n}|+\beta} \frac{V_{\mathcal{B},j,\ell,\mathbf{n},n',p,k}}{k-1} \left( (c_{j,b_p}+ c_{\ell,b_p})^{1-k} - (c_{j,b_p}+ c_{\ell,b_p} \varepsilon)^{1-k} \right).
\end{align*}
We conclude via a Taylor expansion of $(c_{j,b_p}+ c_{\ell,b_p} \varepsilon)^{1-k}$ and $\ln \left( 1+\frac{c_{\ell,b_p}}{c_{j,b_p}} \varepsilon \right)$ near $\varepsilon=0$.
\end{proof}

\bibliography{./Bibliography.bib}
\bibliographystyle{alpha}
\end{document}